\newcommand{\N}{\mathbb{N}}
\newcommand{\R}{\mathbb{R}}
\newcommand{\Z}{\mathbb{Z}}
\newcommand{\Ind}{\mathbbm{1}}
\newcommand{\Id}{\text{Id}}
\newcommand{\calB}{\mathcal{B}}
\newcommand{\calF}{\mathcal{F}}
\newcommand{\calG}{\mathcal{G}}
\newcommand{\calH}{\mathcal{H}}
\newcommand{\calJ}{\mathcal{J}}
\newcommand{\calL}{\mathscr{L}}
\newcommand{\calM}{\mathcal{M}}
\newcommand{\calP}{\mathcal{P}}
\newcommand{\calS}{\mathcal{S}}
\newcommand{\calT}{\mathcal{T}}
\newcommand{\calW}{\mathcal{W}}
\newcommand{\spt}{\mathrm{spt}}
\newcommand{\Lip}{\mathrm{Lip}}
\newcommand{\Fcm}{\calF_{\text{CM}}}
\newcommand{\Scm}{\calS_{\text{CM}}}
\newcommand{\vh}{\mathbf{h}}
\newcommand{\vdh}{\mathbf{d}_h}
\newcommand{\dnabla}{\overline{\nabla}}
\newcommand{\dd}[1]{\mathop{}\!\mathrm{d} #1}
\newtheorem{theorem}{Theorem}[section]
\newtheorem{cor}[theorem]{Corollary}
\newtheorem{lemma}[theorem]{Lemma}
\newtheorem{proposition}[theorem]{Proposition}
\theoremstyle{definition}
\newtheorem{definition}[theorem]{Definition}
\newtheorem{remark}[theorem]{Remark}
\newtheorem{example}[theorem]{Example}
\numberwithin{equation}{section}
\title{Convergence of the fully discrete JKO scheme}
\begin{document}

\author{Anastasiia Hraivoronska}
\author{Filippo Santambrogio}

\begin{abstract}
    The JKO scheme provides the discrete-in-time approximation for the solutions of evolutionary equations with Wasserstein gradient structure. We study a natural space-discretization of this scheme by restricting the minimization to the measures supported on the nodes of a regular grid. The study of the fully discrete JKO scheme is motivated by the applications to developing numerical schemes for the nonlinear diffusion equation with drift and the crowd motion model. The main result of this paper is the convergence of the scheme as both the time and space discretization parameters tend to zero in a suitable regime. 
\end{abstract}

\maketitle
\tableofcontents

\section{Introduction}

The theory of gradient flows in the space of probability measures provides powerful tools for studying a wide class of evolutionary equations \cite{ambrosio2008gradient}. The JKO scheme (or the minimizing movement scheme) plays an essential role in this theory, acting as a counterpart for the implicit Euler scheme for classical gradient-flow ODEs. It first appeared in Jordan, Kinderleherer, and Otto's paper \cite{jordan1998variational}, where they recognized the Fokker-Planck dynamics as a gradient flow of the Boltzmann entropy in the space of probability measures with an appropriate metric. The JKO scheme provides as an output a sequence of probability measures (see below), each one optimizing a variational problem involving a transport cost to the previous one and an energy functional (the Boltzmann entropy in the case of Fokker-Planck), which converges to the solution of the desired PDE when a time step parameter $\tau$ tends to $0$. As such, it is a means to study well-posedness and to develop numerical schemes for the equations of interest.

In this paper, we consider the most natural space-discretization of this scheme: we consider a uniform and regular grid and restrict our analysis to measures supported on the nodes of this grid. While the optimal transport problem and the Wasserstein distance that it defines are well-posed for atomic measures, our approach requires adapting the definition of some energy functionals to the same atomic measures, but this is very natural when we consider measures on a regular and fixed grid. We need hence to introduce another approximation parameter $h$, corresponding to the space step of the grid, and we study the convergence of the scheme when $\tau,h\to 0$. We identify for this sake a sharp condition, namely $h/\tau\to 0$, which is both necessary and sufficient to prove this convergence. Note that this condition is the opposite of a very standard one in numerical analysis, called the CFL condition, where instead $\tau/h$ is required to be small (called the CFL condition). This is not surprising as the CFL condition is standard in explicit methods, while the JKO scheme is implicit.

\subsection{The JKO scheme and the evolution PDEs that it approximates}
The JKO scheme is a discrete-in-time iterative scheme defined in the space of probability measures $\calP(\Omega)$ (in this paper $\Omega\subset\R^d$ is a bounded domain):
\begin{equation}\label{eq:JKO-intro}\tag{JKO}
    \rho^\tau_{k+1} \in \arg \min_{\rho\in\calP(\Omega)} \Big\{ \calF(\rho) + \frac{1}{2\tau} W_2^2(\rho, \rho^\tau_k)\Big\}, \qquad \text{with given } \rho^\tau_0 = \overline{\rho} \in \calP(\Omega),
\end{equation}
where $\tau > 0$ is a time step parameter, $\calF: \calP(\Omega) \to \R\cup\{+\infty\}$ is an energy functional, and $W_2$ is the Wasserstein distance induced by the optimal transport problem with the quadratic cost
\begin{equation}\label{eq:W2}
    W_2^2(\mu, \nu) = \min_{\gamma\in\Gamma(\mu,\nu)} \iint |x-y|^2 \gamma (\dd x \dd y),
\end{equation}
where $\Gamma(\mu,\nu) = \{\gamma\in\calP(\Omega\times\Omega) : P^1_{\#}\gamma = \mu,  P^2_{\#}\gamma = \nu \}$ is the set of couplings between the marginals $\mu$ and $\nu$.
The sequence of minimizers $\{\rho^\tau_k\}$ of \eqref{eq:JKO-intro} can be used to build a continuous-in-time curve $\rho^\tau$, for instance, with piecewise constant interpolation
\begin{equation*}
    \rho^\tau(t) \coloneq \rho^\tau_{k+1} \qquad \text{for } t\in (k\tau, (k+1)\tau].
\end{equation*}
Under suitable assumption on the energy $\calF$, one can prove that $\rho^\tau$ converges to a weak solution of 
\begin{equation}\label{eq:GF-PDE}
    \partial_t \rho + \nabla \cdot \Big( \rho \nabla \frac{\delta \calF}{\delta \rho} \Big) = 0 \qquad \text{ in } (0,T) \times \Omega,
\end{equation}
with the initial datum $\rho_0 = \Bar{\rho}$ and complemented with the non-flux boundary conditions on $\partial \Omega$, where $\delta \calF / \delta \rho$ denotes the first variation of $\calF$.

After the seminal work \cite{jordan1998variational} for the linear Fokker-Planck equation, the JKO scheme has been proven to be a useful analytical tool for many classes of PDEs \cite{ambrosio2008gradient}, including general aggregation-diffusion equationsn the Keller-Segel model \cite{blanchet2008convergence}, cross-diffusion systems \cite{carrillo2018splitting, ducasse2023cross, kim2018nonlinear}, and higher-order equations \cite{carlier2019total, matthes2009family}. 

In this paper, we consider the energy functionals that are the sum of internal and potential energy
\begin{equation}\label{eq:energy-intro}
    \calF(\rho) = \int_\Omega f(\rho) + \int_\Omega \rho V,
\end{equation}
with a convex energy density $f: (0, \infty) \to \R \cup \{+\infty\}$ and an external potential $V: \Omega \to \R$. This energy gives rise to the drift-diffusion equations of the form
\begin{equation}\label{eq:drift-diffusion}
    \partial_t \rho = \Delta L(\rho) + \nabla \cdot (\rho \nabla V),
\end{equation}
where the diffusion term is defined by $L'(s) = s f''(s)$.

We provide a few examples that we have in mind as test cases for the results of this paper. The most well-known PDE of the type \eqref{eq:drift-diffusion} is the linear Fokker-Planck equation corresponding to $f(s) = s \log s$ and $L(s) = s$. The second example is the porous media equation with drift, where the energy density is the power law $f_m(s) = s^m / (m-1)$ and $L_m(s) = s^m$, $m > 1$. We use these two cases as running examples, but we prove the results for more general convex superlinear $f$ (the exact assumptions are specified in Section~\ref{sec:main-results}) and $V\in \Lip(\R^d)$.

The Hele-Shaw-type system is the third example that plays a special role in this paper:
\begin{equation}\label{eq:hele-shaw-intro}
    \begin{cases}
        \partial_t \rho = \nabla p + \nabla\cdot(\rho \nabla V), \\
        p \geq 0, \quad \rho \leq 1, \quad p (1 - \rho) = 0,
    \end{cases}
\end{equation}
where we introduced the pressure variable $p$. It emerges in the incompressible limit of the porous media equation, i.e., $m\to\infty$ \cite{gil2001convergence}. The gradient flow interpretation for \eqref{eq:hele-shaw-intro} appears in \cite{maury2010macroscopic, maury2011handling} with application to the crowd motion model. The idea is that the solutions of \eqref{eq:hele-shaw-intro} can be obtained by means of the JKO scheme \eqref{eq:JKO-intro} choosing the energy functional
\begin{equation*}
    \calF_\text{CM}(\rho) = \begin{cases}
        \int_\Omega \rho V, \quad \rho \leq 1, \\
        +\infty, \qquad \text{otherwise.}
    \end{cases}
\end{equation*}
One can see that $\Fcm$ has a similar structure as \eqref{eq:energy-intro} with the degenerate energy density $f(s) = 0$ for $s\in [0, 1]$ and $f(s) = + \infty$ for $s > 1$, which is exactly the pointwise limit of the porous-media energy density $f_m$ as $m \to \infty$. 

\subsection{Contribution}
In this paper, we explore a fully discrete JKO scheme which, to our knowledge, has not been studied before. We restrict the minimization at each step of the JKO scheme to the set of probability measures supported on the vertices of the grid $h\Z^d$ with a small space discretization parameter $h>0$. Precisely, we define the discrete domain $\calT^h \coloneq h\Z^d\cap\Omega$ and the set of corresponding probability measures
$$
    \calP(\calT^h) \coloneq \Big\{ \rho \in \calP(\Omega) : \rho = \sum_{z\in\calT^h} \rho_z \delta_z \Big\}.
$$
Then the fully discrete JKO scheme is defined as
\begin{equation}\label{eq:JKOd-intro}\tag{JKO$^{h,\tau}$}
    \rho^{h,\tau}_{k+1} \in \arg \min_{\rho\in\calP(\calT^h)} \Big\{ \calF^h(\rho) + \frac{1}{2\tau} W_2^2(\rho, \rho^{h,\tau}_k)\Big\}, \qquad \text{with given } \rho^{h,\tau}_0 \in \calP(\calT^h),
\end{equation}
where $\calF^h : \calP(\calT^h) \to \R\cup\{+\infty\}$ is the discrete counterpart of the energy $\calF$ from \eqref{eq:energy-intro} defined as 
\begin{equation}\label{eq:energy-disc-intro}
    \calF^h(\rho) = \sum_{z\in \calT^h} f \Big( \frac{\rho_z}{h^d} \Big) h^d + \sum_{z\in \calT^h} V(z) \rho_z.
\end{equation}
For the crowd motion case, we similarly define
\begin{equation}\label{eq:energy-CM-disc-intro}
    \Fcm^h (\rho^h) = \begin{cases}
        \displaystyle \sum_{z\in \calT^h} V(z) \rho_z, & \frac{\rho_z}{h^d} \leq 1 \text{ for every }z, \\
        +\infty, & \text{otherwise.}
    \end{cases} 
\end{equation}
We stress that we use the standard Wasserstein distance in the discrete scheme \eqref{eq:JKOd-intro} differently from other works where the optimal transport model itself is modified in order to take care of the discrete structure of the measures (see for instance \cite{Maas}, where the distance is defined in connection with a Markov chain on a graph).

As we will explain better below, we introduce the fully discrete JKO scheme having applications for numerical schemes in mind. In this sense, the crowd motion model is a particularly interesting case, because \eqref{eq:JKOd-intro} becomes a linear optimization problem:
\begin{equation*}
    \rho^{h,\tau}_{k+1} = P^1_{\#}\gamma^{h,\tau}_k, \quad \gamma^{h,\tau}_k \in \arg \min_{\gamma\in \calP(\calT^h\times\calT^h)} \bigg\{ \iint \Big\{ V(x) + \frac{|x-y|^2}{2\tau} \Big\} \gamma (\dd x \dd y), \quad P^1_{\#} \gamma \leq 1, ~~  P^2_{\#} \gamma = \rho^\tau_k \Big\}.
\end{equation*}
In the case of the drift-diffusion equation, \eqref{eq:JKOd-intro} is a nonlinear convex optimization problem. We will also mention thoughout the paper how to consider the case of interaction potentials (corresponding to a drift given by $-\nabla W*\rho$, or to an energy functional including a term of the form $\int\int W(x-y)\dd\rho(x)\dd\rho(y)$), which is a small variant in terms of PDEs, but in terms of optimization it is less interesting, as it could give raise to non-convex terms in the optimization problem. For this reason, the interaction case is only mentioned as a remark and is not the main scope of our contribution.

The goal of this paper is to establish the convergence result for \eqref{eq:JKOd-intro} in the joint limit $h,\tau \to 0$. The relation between $h$ and $\tau$ plays a crucial role in the convergence. To see how this relation affects the convergence, we consider a toy example with the movement driven only by an external potential. Specifically, let the energy functional with $V\in C^{1,1}(\R^d)$ be defined as:
\begin{equation}\label{eq:potential-energy-intro}
    \calF^h_V(\rho^h) \coloneq \sum_{z\in\calT^h} V(z) \rho^h_z.
\end{equation}
In this case, the Dirac masses in the initial datum move independently of each other. Consider the movement of $\delta_{x_0}$ with $x_0 \in \calT^h$. In the continuous setting, it simply means that particles move along the trajectories of the ODE $\Dot{x} = - (\nabla V)(x)$ with $x(0) = x_0$. In the discrete setting, we look for a position $x_1\in\calT^h$ minimizing $V(x) + |x - x_0|^2 / 2\tau$. Using $x_0$ as a competitor for the minimizer, we obtain
    $$
        V(x_0) \geq V(x_1) + \frac{|x_1 - x_0|^2}{2 \tau}.
    $$
    Since the minimal distance that each particle has to travel is $h$, a necessary condition in order to observe some movement is  $h/\tau \leq 2 \|\nabla V\|_{\text{Lip}}$. We see that if asymptotically $h/\tau > 2 \|\nabla V\|_{\text{Lip}}$, then we cannot expect convergence to the continuous solution, because every subsequent minimizer is equal to $x_0$ and the evolution is "frozen".

    We will further see that, for convergence, we need an even stronger assumption $h/\tau \to 0$, and this assumption is sharp. If $h \sim \tau$, we cannot guarantee the convergence to the solution because the discrete grid can introduce additional drift to the evolution. The example with the potential movement is detailed in Section~\ref{sec:potential-movement}.

Strikingly enough, the condition $h/\tau\to 0$ is indeed also sufficient to prove the convergence when the functional $\mathcal F$ also includes a term depending on the density, which means when the corresponding PDE has diffusion terms. Our main results are thus the convergence statements for \eqref{eq:JKOd-intro} to the drift-diffusion equation and the crowd motion model. Let $\{\rho^{h,\tau}_k\}$ be the family of minimizers for \eqref{eq:JKOd-intro} with $\calF^h$ defined in \eqref{eq:energy-disc-intro} and let $\rho^{h,\tau}$ be the piecewise constant-in-time interpolation of $\{\rho^{h,\tau}_k\}$. Then the curve $\rho^{h,\tau}$ converges (up to a subsequence) as $h,\tau, h/\tau \to 0$ to a continuous curve $\rho \in C([0,T], \calP(\Omega))$, which is a weak solution to \eqref{eq:drift-diffusion}. An analogous result holds for the crowd motion energy defined in \eqref{eq:energy-CM-disc-intro} with the convergence to \eqref{eq:hele-shaw-intro}. The precise statements of the main results are given in Section~\ref{sec:main-results}. The strategy of the proof is based on the so-called {\it Energy Dissipation Inequality}, which characterizes the solutions of the PDE as those curves satisfying a certain (sharp) inequality on the rate of dissipation of the energy $\mathcal F$. This inequality will be obtained by passing to the limit similar inequalities obtained on the discrete problems thanks to the optimality conditions, and carefully mixing different forms of interpolations. Indeed, a variational interpolation {\it \`a la} De Giorgi will be used to make a slope term appear (see Section~\ref{sec:nonlinear-diffusion}.), and this interpolation will be composed of measures also supported on the grid, but a gedoesic interpolation (in the sense of geodesic in the Wasserstein space on the whole space, not only on the grid) will be used to handle a velocity term.

\subsection{Related work}
The JKO scheme has been actively employed to develop numerical schemes for the evolution equations with the gradient structure. From the theoretical point of view, it has advantages such as a variational formulation, providing gradient-structure preserving and stability properties. The difficulty is finding an efficient approach to discretizing the Wasserstein distance. The existing methods rely on various reformulations of the optimal transport problem, such as dynamical formulation by Benamou-Brenier \cite{benamou2000computational}, entropic regularization \cite{cuturi2013sinkhorn}, and semi-discrete formulation \cite{kitagawa2019convergence, merigot2011multiscale}. We briefly review the results in the literature on numerical schemes based on the JKO scheme.
 
The idea of the Benamou-Brenier formulation is that the Wasserstein distance $W_2$ admits an alternative formulation
\begin{equation}\label{eq:BB-W2}
    W_2^2(\mu, \nu) = \min \bigg\{ \frac{1}{2} \int_0^1 \int_\Omega \bigg| \frac{\dd j_t}{\dd \rho_t} \bigg|^2 \dd \rho_t \dd t, \quad \partial_t \rho + \text{div} j = 0, \quad \rho_0 = \mu, \rho_1 = \nu \bigg\}.
\end{equation}
Here, to compute $W_2$, one considers all absolutely continuous curves $[0,1]\ni t \mapsto \rho_t$ connecting $\mu$ and $\nu$ and minimizes the corresponding integral of the velocity field $v_t = \dd j_t/\dd \rho_t$. Relying on this formulation, they implement the augmented Lagrangian method in \cite{benamou2016augmented} and the successive primal-dual method in \cite{carrillo2022primal} for the numerical approximation of \eqref{eq:JKO-intro}. A disadvantage of this approach is that solving the system of optimality conditions requires inner time stepping. As an alternative, which avoids the sub-discretization, several works 
use the linearization of the Wasserstein distance \cite{cances2020variational, li2020fisher, natale2020tpfa}. The main idea is that the subsequent minimizers $\rho^\tau_k$ and $\rho^\tau_{k+1}$ of \eqref{eq:JKO-intro} are "close" to each other, and it is reasonable to replace the $W_2$ distance with a suitable weighted $H^{-1}$ norm. 

Another approach to computing the Wasserstein distance relies on modifying it with the entropic regularization term because the regularized version can be computed efficiently with the Sinkhorn algorithm \cite{cuturi2013sinkhorn}. The entropic regularization is applied to the JKO scheme in \cite{carlier2017convergence}, where the authors prove the convergence of the regularized scheme in the joint limit $\tau, \varepsilon \to 0$ with $\varepsilon/\tau \to 0$, where $\varepsilon > 0$ is the regularization parameter. 

In \cite{benamou2016discretization}, the authors treat the JKO scheme as a variational problem on the space of convex functions with the Monge-Amp\`ere operator, for which they construct a discretization. 

The crowd motion model corresponding to the constraint $\rho\leq 1$ has also been treated with numerical methods inspired by the JKO scheme, often using variants of the same methods as for non-linear diffusion (see, in particular, \cite{benamou2016augmented} or \cite{carlier2017convergence}). In the original works on the gradient flow interpretation of this crowd motion model (\cite{maury2010macroscopic,maury2011handling}) a different specific approach was used in order to compute the projection of a measure onto the set of admissible measures satisfying $\rho\leq 1$ (and the drift was treated via a splitting method, i.e. first advecting the density ignoring the constraint, and then projecting). The projection was approximated via a stochastic procedure reminiscent of {\it Diffusion Limited Aggregation}, which was well efficient in practice but a convergence proof was out of reach.

Finally, we cite the works in \cite{leclerc2020lagrangian} and \cite{CarPatCra}, where both linear diffusion and crowd motion models are addressed. In spirit this is very different from what we do, since it is based on a discretization in ``space'' (replacing densities with moving particles) but not in time, while in our approach the starting point is the JKO scheme, which is a discretization in time, which we complete with a space discretization. Yet, a common point with our work is that they require a suitable extension of some energy functionals (such as the Boltzmann entropy or the constraint on the density) to atomic measures. Here we see how our work really chooses the simplest and most natural point of view: having a fixed grid allows us without ambiguity to spread the mass of an atom over the cell around it, while when particles move this is more delicate. This is done via a Moreau-Yosida approximation of the functional in \cite{leclerc2020lagrangian} and via convolution in \cite{CarPatCra}.

\subsection{Numerical challenges and applications}\label{sec:intro-numerics}
Even if this paper is not devoted to numerical computations, but only to the proof of convergence of the method, we would like to explain here the interest of our approach for numerics. If the original JKO scheme is a way to approximate the solutions of a parabolic PDEs via a sequence of variational problems, the fully discrete JKO scheme that we propose is an effective way to approximate it via a sequence of finite-dimensional variational problems. In the examples that we analyze in detail, these optimization problems are convex. The unknown in the problems can be the new measure $\rho^\tau_{k+1}$ or the transport plan $\gamma$ between it and the previous one. If the number of points in the space discretization is $N=O(h^{-d})$, then the problem is set on a space $\R^N$ (with positivity and mass constraints) in the first case, in a space $\R^{N\times N)}$ in the second. On the other hand, the function to be minimized is more explicit when expressed in terms of the transport plan as it includes a linear part and a non-linear one depending on its projection onto the space $\R^N$. In the particular case of the crowd motion model, the whole problem is linear in $\gamma$ and only subject to linear equality or inequality constraints, hence it is a typical linear programming problem (LP). This LP problem (or the one appearing as a part of the problem when $f$ is nonlinear) can be solved by the simplex method, but this can be very slow, or better by the network simplex, which has been implemented in a very efficient and fast way in \cite{NetworkBonneel,BPPH11}. However, since our main result states that one should choose $h/\tau$ very small (because of the reverse CFL condition), the very large number of points in the space discretization can be an obstacle in the practical implementation of the method. 

Luckily, there exist at least two ways to decrease the complexity of the problem by reducing the number of unknowns of the problem. The first one is a clever idea from \cite{AurBasGuaVen}, which exploits the separability of the quadratic cost in terms of costs on each coordinate direction and transform this into an LP problem with $N^{1+1/d}=O(h^{-(d+1)})$ unknowns instead of $N^2=O(h^{-2d})$. Note that this requires the use of a regular grid for the discretization, which we do anyway because it is also needed in some of our proofs. If some parts of the proofs could be adapted to other discretizations, some others are really based on the crucial role played by the coordinate axis. We also mention the fact that this requirement to choose a regular grid prevents us from easily considering local refinements of the mesh, which is instead a very common feature of many efficient numerical approaches to solve parabolic PDEs.

The second improvement is based on the idea that, when $\tau$ is small, the displacement between $\rho^\tau_{k}$ and $\rho^\tau_{k+1}$ will be small. If one can prove an $L^\infty$ bound on the displacement $|x-y|$ in the support of the optimal plan $\gamma$, then $\gamma$ will be concentrated on a neighborhood of the diagonal and some points in the product space can be removed from the unknown. If a bound of the form $|x-y|\leq C\tau^\alpha$ can be proven (ideally with $\alpha=1$), the number of unknowns obtained thanks to \cite{AurBasGuaVen} goes down to $\tau^\alpha h^{-(d+1)}$. A very general method to prove this kind of $L^\infty$ bounds is contained in \cite{BJR2007}, and the proof therein can be easily adapted to the case of a grid. This unfortunately provides an exponent $\alpha<1$ while, with very different methods (PDE-based and difficult to adapt to the case of a grid), at least in the case of linear diffusion, one can obtain $\alpha=1$ (see \cite{FerSan}). Notice however that in the crowd motion case the bound with $\alpha=1$ is false, as one can see from the sharp estimate obtained in \cite{DavPer} where the velocity is proven to belong to $L^4$ in space and time, and not better, while $|x-y|\leq C\tau$ would imply an $L^\infty$ bound on the velocity.

We also mention the fact that the discrete transport problem that we need to solve could also be solved via the Sinkhorn algorithm based on entropic regularization, which would insert an extra parameter $\varepsilon$ to be handled. The recent result in \cite{BarHraSan} explains how to use Sinkhorn without taking $\varepsilon/\tau \to 0$ if one wants to approximate an equation already including linear diffusion. However, we do not consider in this paper the combination of the two methods, i.e. Sinkhorn on a grid with a proof of convergence.

All in all, the effective implementation of the approximation suggested in the present paper opens a certain number of challenging questions both in terms of estimates and of complexity. The convergence rate in terms of $h$ and $\tau$ should also be considered for a rigorous comparison to other methods. In this comparison, this fully discrete JKO scheme is for sure a costly approach due to the smallness of the parameter $h$, but it has the advantage of not suffering from instability or slower convergence in the regions where the density is small, and it works in the same way for linear or nonlinear diffusion. It is, of course, a bad choice to approximate the linear Fokker-Planck equation, but it can be competitive for nonlinear equations.

\bigskip
\textbf{Outline.} The rest of the paper is organized as follows. In Section~\ref{sec:prelim-main}, we present the setting of the problem in full detail, discuss the preliminaries, and formulate the main results. Section~\ref{sec:potential-movement} is dedicated to the toy example with pure potential energy, where we motivate the necessity of the assumption $h/\tau \to 0$. The convergence result for the fully discrete JKO scheme for the nonlinear drift-diffusion equation is proven in Section~\ref{sec:nonlinear-diffusion}. Finally, we adapt the proof of the convergence of the scheme to the crowd motion case in Section~\ref{sec:crowd-motion}. Note that the precise assumptions on the integrand $f$ and on the potential $V$ defining the energy $\mathcal F$ will only be given at the beginning of Section ~\ref{sec:nonlinear-diffusion}.

\bigskip

\bigskip
\noindent {\bf Acknowledgments.} The authors acknowledge the support of the European Union via the ERC AdG 101054420 EYAWKAJKOS. 

\section{Setting of the problem and statement of the main results}\label{sec:prelim-main}

\subsection{Continuous models} In this section, we discuss two PDEs that we expect to obtain in the discrete-to-continuum limit: the nonlinear drift-diffusion equation and the crowd motion model. In both cases, we need the characterization of the weak solutions by the energy-dissipation inequality. 

Let $\Omega\subset\R^d$ be a bounded domain, either convex or with a smooth boundary.



\subsubsection{Nonlinear diffusion equation} Consider the nonlinear diffusion equation with drift
\begin{equation}\label{eq:nonlinear-diff-pde}
    \partial_t \rho + \mathrm{div} \big( \rho \nabla (f'(\rho) + V) \big) = 0, \qquad \text{in } (0,T)\times\Omega
\end{equation}
complemented with the non-flux boundary condition
$$
    \partial_\nu L(\rho) + \rho \partial_\nu V = 0 \qquad \text{on } \partial \Omega,
$$
where the function $L$ is defined by $L'(s) = s f''(s)$ and $\nu$ is the outer normal vector on $\partial \Omega$. The free energy functional for this equation is
\begin{equation}\label{eq:general-energy}
    \calF(\rho) = \begin{cases}
        \int_\Omega f(u) \dd \calL^d + \int_\Omega V \dd \rho \quad \text{if } \rho = u\calL^d\\
        +\infty \hspace{3.5cm} \text{otherwise.}
    \end{cases}
\end{equation}

An essential tool for the convergence results is the characterization of solutions by the energy-dissipation balance. We first define the Fisher information as follows.
\begin{definition}[Fisher information]\label{def:Fisher} Let the energy be defined as in \eqref{eq:general-energy}. We defined the corresponding Fisher information as the functional $\calS: \calP(\Omega) \to [0, +\infty]$:
    \begin{equation}
        \calS(\rho) = \begin{cases}
            \frac{1}{2} \int_\Omega |\nabla \ell(u) + \sqrt{u} \nabla V|^2 \dd \calL^d  & \text{if } \rho = u \calL^d \text{ and } \ell(u) \in H^1(\Omega), \\
            +\infty &  \text{otherwise},
        \end{cases}
    \end{equation}
    where $\ell: (0,\infty) \to \R$ is a function defined by $\ell'(s) = \sqrt{s} f''(s)$ for $s\in (0,\infty)$. Note that this definition is algebraically equivalent to $ \frac{1}{2} \int_\Omega |\nabla (f'(u)+ V)|^2 \dd \rho$ (which also equals $\frac{1}{2} \int_\Omega |\nabla \frac{\delta\mathcal F}{\delta\rho}|^2 \dd \rho$), but we prefer for the rest of our analysis to use quantities which are not weighted with $\rho$.
\end{definition}

\begin{example} As mentioned in the introduction, we use the Fokker-Planck and porous media equations as running examples. For the Fokker-Planck equation, a simple calculation gives $\ell'(s) = s^{-1/2}$ and $\ell(s) = 2 \sqrt{s}$. Thus, the Fisher information for $\rho = u \calL^d$ is
\begin{equation*}
    \calS(\rho) = \int_\Omega |2\nabla\sqrt{u} + \sqrt{u }\nabla V |^2 \dd \calL^d
\end{equation*}.

For the porous media equation, where $f(s) = s^m / (m - 1)$ with $m > 1$ and $V = 0$. In this case, $\ell'(s) = m s^{m-3/2}$ and $\ell(s) = m s^{m - 1/2} / (m - 1/2)$. The Fisher information for $\rho = u \calL^d$ becomes
\begin{equation*}
    \calS(\rho) = \frac{m^2}{(m - 1/2)^2} \int_\Omega \big| \nabla u^{m-1/2} \big|^2 \dd\calL^d.
\end{equation*}
\end{example}

\begin{proposition}[Characterization of solutions]\label{prop:charact}
    Let a pair $(\rho, v)$ satisfy the continuity equation
    \begin{equation}\label{eq:CE}
        \partial_t \rho + \mathrm{div} (\rho v) = 0 \qquad \text{in } (0,T)\times\Omega
    \end{equation}
    in the distributional sense with the no-flux boundary condition. Further, let $(\rho, v)$ satisfy the EDI:
    \begin{equation}\label{eq:EDI}
        \calF(\rho_T) - \calF(\rho_0) + \int_0^T \bigg\{ \frac{1}{2} \int_\Omega | v_t |^2 \dd \rho_t + \calS(\rho_t) \bigg\} \dd t \leq 0.
    \end{equation}

    If the chain rule holds, i.e., for any curve satisfying \eqref{eq:CE}
    \begin{equation}\label{eq:chain-rule}
        \frac{\dd}{\dd t} \calF(\rho_t) = \int \nabla (f'(u_t) +  V) \cdot v_t \dd \rho_t,
    \end{equation}
    then $\rho\in C((0,T);\calP(\Omega))$ is a distributional solution for \eqref{eq:nonlinear-diff-pde} with $v_t = -\nabla f'(\rho_t) -\nabla V $. 
\end{proposition}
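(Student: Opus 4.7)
The plan is to combine the chain rule with the EDI to realize the left-hand side of \eqref{eq:EDI} as a single nonnegative quantity (a perfect square under the integral sign), whose vanishing then pins down $v_t$ uniquely. More precisely, integrating the chain rule \eqref{eq:chain-rule} on $[0,T]$ yields
\begin{equation*}
    \calF(\rho_T) - \calF(\rho_0) = \int_0^T \int_\Omega \nabla\bigl(f'(u_t) + V\bigr)\cdot v_t \dd{\rho_t} \dd t,
\end{equation*}
which is well-defined since, from the EDI, both $v_t\in L^2(\rho_t)$ and $\nabla(f'(u_t)+V)\in L^2(\rho_t)$ for a.e.\ $t$ (using the equivalent weighted form of $\calS$ recalled in Definition~\ref{def:Fisher}). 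Substituting into \eqref{eq:EDI} and rewriting $\calS(\rho_t)=\tfrac12\int_\Omega|\nabla(f'(u_t)+V)|^2\dd\rho_t$, the inequality becomes
\begin{equation*}
    \int_0^T \int_\Omega \Bigl\{ \tfrac12 |v_t|^2 + \nabla\bigl(f'(u_t)+V\bigr)\cdot v_t + \tfrac12 \bigl|\nabla\bigl(f'(u_t)+V\bigr)\bigr|^2 \Bigr\} \dd{\rho_t} \dd t \leq 0,
\end{equation*}
that is,
\begin{equation*}
    \int_0^T \int_\Omega \tfrac12 \bigl| v_t + \nabla\bigl(f'(u_t)+V\bigr) \bigr|^2 \dd{\rho_t} \dd t \leq 0.
\end{equation*}

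The integrand is pointwise nonnegative, hence the inequality is actually an equality and the integrand vanishes $\dd t\otimes\dd\rho_t$-a.e. This forces
\begin{equation*}
    v_t(x) = -\nabla f'(u_t(x)) - \nabla V(x) \qquad \text{for $\rho_t$-a.e.\ $x$, for a.e.\ $t\in(0,T)$.}
\end{equation*}
Plugging this vector field into the continuity equation \eqref{eq:CE} and using the identity $\rho\,\nabla f'(\rho)=\nabla L(\rho)$ (which is precisely the definition $L'(s)=s f''(s)$), we conclude
\begin{equation*}
    \partial_t \rho - \Delta L(\rho) - \mathrm{div}(\rho\nabla V) = 0
\end{equation*}
in the distributional sense on $(0,T)\times\Omega$, with the no-flux boundary condition inherited from the distributional formulation of \eqref{eq:CE}. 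This is exactly \eqref{eq:nonlinear-diff-pde}. Finally, the continuity $\rho\in C([0,T];\calP(\Omega))$ with respect to $W_2$ is a standard consequence of the continuity equation together with $v\in L^2(\dd\rho_t\dd t)$ (see e.g.\ \cite{ambrosio2008gradient}).

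The only delicate point in this argument is the assumed chain rule \eqref{eq:chain-rule}: once it is available, everything else is a one-line application of the identity $a^2/2 + ab + b^2/2 = (a+b)^2/2$. Thus the real content of the proposition lies outside this proof, in verifying when \eqref{eq:chain-rule} actually holds along curves produced by the scheme—which is why it is stated here as a hypothesis and why the subsequent sections focus on the regularity needed to justify it for the limit of \eqref{eq:JKOd-intro}.
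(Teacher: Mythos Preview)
Your argument is correct and is precisely the standard completion-of-the-square argument the paper has in mind; the paper in fact does not spell out a proof of this proposition but only comments that the real difficulty lies in establishing the chain rule \eqref{eq:chain-rule}, which is exactly the point you make in your final paragraph.
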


The point of Proposition~\ref{prop:charact} is that the EDI characterizes the weak solution given that the chain rule \eqref{eq:chain-rule} holds. It can be quite technical to prove that the chain rule holds for an arbitrary $f$. The proof of the chain rule can be based on \cite[Theorem~10.4.8,~Theorem~10.4.13]{ambrosio2008gradient}, where $f$ is assumed to be a convex differentiable function with superlinear growth and to satisfy the McCann condition \cite{mccann1997convexity}, i.e., the map $s\mapsto f(s^{-d})s^d$ is convex and non increasing in $(0,+\infty)$, which yields the geodesic convexity of the internal energy. For example, the Fokker-Planck and porous media equations satisfy these assumptions.

Instead of the McCann condition, the chain rule can be proved assuming that the initial datum is bounded from above and below by a positive constant \cite{agueh2002existence}. A relaxation of the McCann condition appears in \cite{caillet2024doubly}, where $f$ is decomposed, after some approximation, into the difference of two functions satisfying the McCann condition, and a mild assumption on integrability of the initial datum is required. In this paper, we do not go into detail about proving the chain rule. This indeed only concerns the limit equation, and is actually quite standard for porous-medium-type equations. Hence, we will simply use the EDI condition \eqref{eq:EDI} to characterize the solution of \eqref{eq:nonlinear-diff-pde}.

\subsubsection{Crowd motion model}\label{sec:crowd-motion-model} Here we explain the macroscopic crowd motion model \cite{maury2010macroscopic, maury2011handling} and its formulation as a gradient flow in $(\calP(\Omega), W_2)$ formalized using the JKO scheme and the energy-dissipation balance. 

Consider a crowd described by an absolutely continuous measure $\rho = u \calL^d \in \calP(\Omega)$. To account for the hard constraint on the concentration of people, we introduce the congestion constraint requiring the density $u\in L^1(\Omega)$ to remain below 1. We assume the population wishes to move with the velocity $w = - \nabla V$ given by a potential $V\in C^1(\R^d)$. The gradient form of the velocity is the necessary assumption for the model to have the gradient structure. The spontaneous velocity $w$ is not always possible to realize due to the congestion constraint. Thus, the actual velocity is defined as a $L^2$ projection of $w$ on the cone of admissible velocities $C_\rho$. Intuitively, the cone $C_\rho$ is the set of velocities that do not increase the density $u$ in the saturated region $\{u = 1\}$. In the PDE terms, the time evolution of $\rho$ is defined by the advection equation with the projected velocity:
\begin{equation}\label{eq:crowd-motion-PDE}
    \begin{array}{cc}
         \partial_t \rho + \text{div} (\rho v) = 0  \\
         \qquad\qquad\qquad\quad v = \text{Proj}_{C_\rho} w.
    \end{array}
\end{equation}
The cone of admissible velocities is defined in duality with the pressure variable $p$, which plays the role of the Lagrangian multiplier for the constraint $u \leq 1$. Thus, the set of density-dependent pressure variables is
$$
    H^1_u = \{ p \in H^1 (\Omega) : p \geq 0 \text{ and } p (1 - u) = 0 \text{ a.e. in } \Omega \}
$$
and then 
$$
    C_{\rho} \coloneq \Big\{ v \in L^2 (\Omega; \mathbb{R}^d) : \int_{\Omega} \nabla p \cdot v \dd \rho \leq 0 \quad \forall p\in H^1_u \Big\}.
$$

The model has a gradient structure in $(\calP(\Omega), W_2)$ with the driving energy
\begin{equation}\label{eq:energy-infty}
    \Fcm (\rho) = \begin{cases}
        \int_\Omega V \, u \dd \calL^d \qquad \text{if } \rho = u \calL^d \text{ and } u \leq 1,\\
        +\infty \qquad \text{otherwise.}
    \end{cases}
\end{equation}
One can define this gradient flow with the JKO scheme. As before, given an initial datum $\rho_0\in \spt (\Fcm)$ and a time step $\tau>0$, we set up an iterative scheme
\begin{equation}\label{eq:JKO-crowd} \tag{JKO$_\text{cm}^\tau$}
    \rho^\tau_{k+1} \in \arg\min_{\rho\in\calP(\Omega)} \Big\{ \Fcm(\rho) + \frac{1}{2\tau} W_2^2(\rho, \rho_k^\tau) \Big\}.
\end{equation}

Again, we use the energy-dissipation balance to characterize solutions. The Fisher information in this case is defined as follows.
\begin{definition} For a density-pressure pair $(u, p)$, we define the Fisher information as
    \begin{equation}\label{eq:Fisher-CM}
    \Scm (u, p) \coloneq \begin{cases}
        \frac{1}{2} \int_\Omega |\nabla p + \nabla V|^2 u \dd \calL^d \quad \text{if } p \in H^1_u \\
        +\infty \hspace{3.2cm} \text{otherwise.}
    \end{cases} 
\end{equation}
\end{definition}

\begin{proposition}[Characterization of solutions for crowd motion]\label{prop:charact-crowd-motion}
    Let a pair $(\rho, v)$ satisfy the continuity equation \eqref{eq:CE} in the distributional sense with the no-flux boundary condition. Let the initial datum $\rho_0 = u_0 \calL^d$ be such that $\Fcm(\rho_0)<\infty$ and $p\in L^\infty([0,T]; H^1(\Omega))$ be the pressure variable such that $p (1- u) = 0$. 
    Further, let $(\rho, p, v)$ satisfy the EDI:
    \begin{equation}\label{eq:EDI-crowd-motion}
        \Fcm(\rho_T) - \Fcm(\rho_0) + \int_0^T \bigg\{ \frac{1}{2} \int_\Omega | v_t |^2 \dd \rho_t + \Scm (u_t, p_t) \bigg\} \dd t \leq 0.
    \end{equation}
    then $\rho\in C((0,T);\calP(\Omega))$ is a distributional solution for the crowd motion model with $v_t = -\nabla p_t -\nabla V $. 
\end{proposition}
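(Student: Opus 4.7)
The plan is to show that the EDI \eqref{eq:EDI-crowd-motion} forces the identification $v_t = -\nabla p_t - \nabla V$ via a completing-the-square argument anchored on the chain rule for the crowd-motion energy. Once this identification is in hand, the continuity equation \eqref{eq:CE} becomes exactly \eqref{eq:crowd-motion-PDE}.

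The first step is the chain rule for $\Fcm$ along the continuity equation. Since $\Fcm(\rho_t)<\infty$ forces $u_t\leq 1$, the energy reduces to $\Fcm(\rho_t)=\int_\Omega V\dd \rho_t$, and for $V\in C^1(\overline\Omega)$ the standard chain rule for the potential energy gives
\begin{equation*}
\Fcm(\rho_T)-\Fcm(\rho_0)=\int_0^T\int_\Omega \nabla V\cdot v_t \dd\rho_t\dd t.
\end{equation*}
The second step is the pointwise algebraic identity
\begin{equation*}
\frac{1}{2}|v|^2+\frac{1}{2}|\nabla p+\nabla V|^2+\nabla V\cdot v=\frac{1}{2}|v+\nabla p+\nabla V|^2-v\cdot \nabla p,
\end{equation*}
which, after using $\dd\rho_t=u_t\dd\calL^d$ to rewrite $\Scm(u_t,p_t)$ as $\tfrac{1}{2}\int_\Omega|\nabla p_t+\nabla V|^2\dd \rho_t$, lets me recast \eqref{eq:EDI-crowd-motion} as
\begin{equation*}
\int_0^T\bigg[\frac{1}{2}\int_\Omega|v_t+\nabla p_t+\nabla V|^2\dd\rho_t-\int_\Omega v_t\cdot \nabla p_t\dd\rho_t\bigg]\dd t\leq 0.
\end{equation*}
The third step is to show that $\int_\Omega v_t\cdot\nabla p_t\dd\rho_t\leq 0$ for a.e.\ $t$, i.e.\ that $v_t\in C_{\rho_t}$. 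Combined with the manifest nonnegativity of the square term, this forces both integrands to vanish, hence $v_t=-\nabla p_t-\nabla V$ in $L^2(\dd\rho_t)$ for a.e.\ $t$, and \eqref{eq:CE} with this velocity is precisely \eqref{eq:crowd-motion-PDE}.

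The main obstacle is the third step, whose rigorous justification is delicate under the mild regularity $p\in L^\infty([0,T];H^1(\Omega))$ and $v\in L^2(\dd\rho_t\dd t)$. Formally, integration by parts in space (using the no-flux condition) together with the continuity equation yields
\begin{equation*}
\int_\Omega v_t\cdot \nabla p_t\dd\rho_t=\int_\Omega\nabla p_t\cdot(u_tv_t)\dd\calL^d=-\int_\Omega p_t\,\mathrm{div}(u_tv_t)\dd\calL^d=\int_\Omega p_t\,\partial_t u_t\dd\calL^d,
\end{equation*}
and since $p_t\geq 0$ is supported on $\{u_t=1\}$, where $u_t$ cannot increase further in time, the integrated quantity is nonpositive. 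Rigorously this calls either for a mollification-and-commutator estimate \`a la DiPerna-Lions, or, more naturally in the present framework, for inheriting the admissibility $v_t\in C_{\rho_t}$ from the discrete JKO scheme whose limit defines $\rho$: the discrete minimization intrinsically projects onto the admissibility set, and this feature is stable under the joint limit $\tau,h\to 0$ that produces our $(\rho,p,v)$.
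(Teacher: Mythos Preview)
Your overall architecture --- chain rule for $\Fcm$ along the continuity equation, then completing the square --- is exactly the paper's strategy. The genuine gap is your third step: you do not actually prove $\int_\Omega v_t\cdot\nabla p_t\dd\rho_t\leq 0$, and both remedies you propose are unsatisfactory. Appealing to the discrete JKO scheme is circular: this proposition is precisely the tool the paper uses afterward to certify that the limit of the scheme solves \eqref{eq:crowd-motion-PDE}, so its proof cannot rely on properties inherited from that particular construction of $(\rho,p,v)$. The DiPerna--Lions route is merely gestured at; making pointwise sense of $\partial_t u_t$ on $\{u_t=1\}$ is exactly the difficulty one is trying to avoid here.

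The paper's argument for this step is simpler and in fact sharper: it shows directly that $\int_\Omega \nabla p_t\cdot v_t\dd\rho_t=0$ (not merely $\leq 0$) via a maximum argument. Fix $t_0$ and any $q\in H^1_{u_{t_0}}$. The scalar function $t\mapsto \int_\Omega q\dd\rho_t$ satisfies $\int_\Omega q\dd\rho_t\leq \int_\Omega q\dd\calL^d$ for all $t$ (since $u_t\leq 1$ and $q\geq 0$), with equality at $t=t_0$ (since $q(1-u_{t_0})=0$). Hence $t_0$ is a maximum, and whenever the derivative exists there it must vanish, giving $\int_\Omega \nabla q\cdot v_{t_0}\dd\rho_{t_0}=0$; taking $q=p_{t_0}$ yields the claim. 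The only technicality is that the set of times $t_0$ at which $\frac{\dd}{\dd t}\int_\Omega q\dd\rho_t$ exists simultaneously for \emph{every} $q\in H^1(\Omega)$ must have full measure; this follows from the $L^2$ bound on $u_t v_t$ and a density argument (weak-$L^2$ Lebesgue points of $t\mapsto u_t v_t$). This completely sidesteps any need to differentiate $u_t$ in time or to invoke commutator estimates.
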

\begin{proof} Using that $(\rho, v)$ satisfies the continuity equation, we directly get that
    $$
        \Fcm(\rho_t) - \Fcm(\rho_0) 
        = \int_\Omega V \dd \rho_t - \int_\Omega V \dd \rho_s = \int_0^t \int_\Omega \nabla V \cdot v_s \dd \rho_s.
    $$
    To prove the chain rule, we need to show that
    $$
        \Fcm(\rho_t) - \Fcm(\rho_0) 
        = \int_0^t \int_\Omega (\nabla V + \nabla p_s) \cdot v_s \dd \rho_s,
    $$
    which means that we need to prove $\int\nabla p\cdot v\dd\rho=0$.
    We proceed similarly as in \cite[Proposition~4.7]{di2016measure}. Fix a time $t_0>0$ and an admissible pressure $q\in H^1_{u_{t_0}}$. Consider the function $(t_0-\varepsilon, t_0 + \varepsilon) \ni t \mapsto \int_\Omega q \dd \rho_t$. We have
    $$
        \int_\Omega q \, \dd \rho_t \leq \int_\Omega q \dd \calL^d \qquad t \in (t_0 - \varepsilon, t_0 + \varepsilon)
    $$
    and the equality is achieved when $t = t_0$. If the map is differentiable at $t_0$, then
    $$
        \frac{\dd}{\dd t} \int_\Omega q \, \dd \rho_t \bigg|_{t=t_0} = 0,
    $$
    and it implies
    $$
        \int_0^t \int_\Omega \nabla p_s \cdot v_s \dd \rho_s = 0.
    $$
    The subtlety with the differentiability is that for any $q\in H^1(\Omega)$ we have for almost every $t$
    $$
        \frac{\dd}{\dd t} \int_\Omega q \, \dd \rho_t = \int_\Omega \nabla q \cdot v_t \dd \rho_t, 
    $$
    but it can happen that the derivative does not exist at $t_0$ for any $q$. Fortunately, we can show that the set of times, for which the derivative on the left-hand side exists for every $q\in H^1(\Omega)$ has full measure. By the continuity equation, this set of times can be chosen as all $t_0 \in [0,T]$ such that
    $$
        \intbar_{t_0}^{t_0+\varepsilon} v_t u_t \dd t \rightharpoonup v_{t_0} u_{t_0} \quad \text{weakly in } L^2(\Omega).
    $$
    The latter convergence holds by the density argument and the bounds $\int_0^T\int_\Omega |v_t|^2 \dd \rho_t \dd t < \infty$ and $\|u_t\|_\infty \leq 1$.
\end{proof}

\subsection{Discretization}

We define a discretization of $\Omega$ as $\calT^h = h\Z^d \cap \Omega$ for any $h>0$. 
An arbitrary probability measure on $\calT^h$ has the form
$$
    \rho^h = \sum_{z\in\calT^h} \rho^h_z \delta_z, \qquad \sum_{z\in\calT^h} \rho^h_z = 1.
$$

It will be convenient for some calculations to use a piecewise constant counterpart of $\rho^h$. Instead of assigning the mass $\rho^h_z$ to one point $z\in\calT^h$, we assign $\rho^h_z$ to the cell $Q_h(z) \coloneq z + (-h/2, h/2)^d$ around $z$  with the uniform density. Precisely, we define
\begin{equation}\label{eq:pwconst-measure}
    \hat{\rho}^h \coloneq \sum_{z\in\calT^h} \rho^h_z \calL^d|_{Q_h(z)}.
\end{equation}
The measure $\hat{\rho}^h$ is absolutely continuous with respect to the Lebesque measure and has the density:
$$
    \hat{u}^h \coloneq \frac{\dd \hat{\rho}^h}{\dd \calL^d} = \sum_{z\in\calT^h} \frac{\rho^h_z}{h^d} \Ind_{Q_h(z)} = \sum_{z\in\calT^h} u^h_z \Ind_{Q_h(z)}.
$$
In the same way, we define the piecewise constant reconstruction for any bounded discrete function $f^h \in \calB(\calT^h)$:
\begin{equation}\label{eq:pwconst-function}
    \hat{f}^h \coloneq \sum_{z\in\calT^h} f^h_z \Ind_{Q_h(z)}.
\end{equation}

For any $z\in\calT^h$ (away from a boundary), the set of neighbors has $2d$ points of the form $z + \vh$ with $\vh \in \{h e_1, - h e_1, \dots, h e_d, - h e_d\} \eqcolon \vdh$. We denote by $\Sigma^h\subset \calT^h\times\calT^h$ the set of pairs of neighboring points, i.e., $\Sigma^h \coloneq \{(z,\zeta) : |z - \zeta| = h\}$.
The interface between two neighboring cells $Q_h(z)$ and $Q_h(\zeta)$, $(z, \zeta)\in\Sigma^h$, is $I_h(z,\zeta) \coloneq \overline{Q_h(z)} \cap \overline{Q_h(\zeta)}$.

We define the fully discrete JKO scheme as a counterpart for \eqref{eq:JKO-intro} with the minimization restricted to the atomic measures on the regular lattice $\calT^h$. We use the discrete energy functional defined as
\begin{equation}\label{eq:disc-energy-nonlinear}
    \calF^h(\rho^h) = \sum_{z\in\calT^h} f \bigg( \frac{\rho^h_z}{h^d} \bigg) h^d + \sum_{z\in\calT^h} V(z) \rho^h_z.
\end{equation}
Note that the discrete version of the internal energy can be obtained by inserting the piecewise constant reconstruction $\hat\rho^h$ into the continuous internal energy 
$$
    \calF(\hat{\rho}^h) = \int_\Omega f \bigg( \frac{\dd\hat{\rho}^h}{\dd\calL^d} \bigg) \dd \calL^d = \sum_{z\in\calT^h} f \bigg( \frac{\rho^h_z}{h^d} \bigg) h^d = \calF^h(\rho^h).
$$
Instead, for the potential energy, we can use the atomic measure to obtain the discrete energy. The discrete energy for the crowd motion model takes the form
\begin{equation}\label{eq:disc-energy-crowd-motion}
    \Fcm(\rho^h) \coloneq \begin{cases} \displaystyle
        \sum_{z\in\calT^h} V(z) \rho^h_z \qquad \text{if } u^h = \frac{\rho^h}{h^d} \leq 1, \\
        +\infty \qquad \text{otherwise,}
    \end{cases}
\end{equation}
which is a natural counterpart for \eqref{eq:energy-infty}.

\begin{definition}\label{def:JKOh} For a given $h > 0$ and $\tau > 0$ of the form $\tau = T/N$ for $T > 0$, $N\in\N$, and given initial data $\rho^{h}_0 \in \calP(\calT^h)$, we define $\{\rho^{h,\tau}_k\}_{k=1,\dots,N}$ as 
    \begin{equation}\label{eq:JKOh}\tag{JKO$^{h,\tau}$}
        \rho^{h,\tau}_{k+1} \in \arg\min_{\rho^h\in\calP(\calT^h)} \Big\{ \calF^h(\rho^h) + \frac{1}{2\tau} W_2^2(\rho^h, \rho^h_k) \Big\}.
    \end{equation}
\end{definition}
It will be convenient to use the notation
\begin{equation*}
    \calJ^{h,\tau}_{\rho^h_0}(\rho^h) \coloneq \calF^h(\rho^h) + \frac{1}{2\tau} W_2^2(\rho^h, \rho^h_0).
\end{equation*}

\begin{remark}[Propagation of the $L^\infty$ bound] \label{remark:prop-Linfty}
    Throughout the paper, we do not require additional assumptions on the initial data. Yet we add the remarks indicating where the proofs can be simplified if we assume a uniform $L^\infty$ bound on $\{\rho^h_0\}_{h>0}$. In those remarks, we rely on the propagation of the $L^\infty$ bound through iterations of the fully discrete JKO scheme. In particular, if we assume an $L^\infty$ bound on the initial data, then we do not need  \eqref{ass:internal-energy-superlinear} for the nonlinear diffusion equation without drift. Here we provide an argument for the propagation of the $L^\infty$ bound. 

    Consider $\calF^h$ given in \eqref{eq:disc-energy-nonlinear} with $V=0$. Let $\displaystyle M \coloneq \max_{z\in\calT^h} \rho^h_0(z)$ and suppose that $S \coloneq \{ z\in\calT^h : \rho^{h,\tau}_1(z) > M \}$ is not empty. Then
    $$
        \sum_{z\in S} \rho^{h,\tau}_1 (z) > |S| M \geq \sum_{z\in S} \rho^{h}_0 (z).
    $$
    Therefore, the mass has to be transported from outside $S$ to inside $S$. More precisely, there exists $(x,y)\in \spt (\gamma)$, with $\gamma$ being the optimal coupling between $\rho^{h,\tau}_1$ and $\rho^h_0$, such that $x\in S$, $y \notin S$, and $\gamma_{xy} > 0$.

    We construct a competitor as
    $\Tilde{\rho}^{h,\tau}_1 \coloneq \rho^{h,\tau}_1 - \gamma_{xy} \delta_x + \gamma_{xy} \delta_y$, together with a new transport plan $\Tilde{\gamma} \in \calP (\calT^h\times\calT^h)$ defined as
    $\Tilde{\gamma}\coloneq\gamma+\gamma_{xy}\delta_{(y,y)}-\gamma_{xy}\delta_{(x,y)}$. This plan is a coupling between $\Tilde{\rho}^{h,\tau}_1$ and $\rho^h_0$, and using it we find
    $$
        W_2^2(\rho^h_0, \Tilde{\rho}^{h,\tau}_1) \leq W_2^2(\rho^h_0, \rho^{h,\tau}_1).
    $$
    Furthermore, by convexity of $f$, we get
    \begin{align*}
        \calF^h(\Tilde{\rho}^{h,\tau}_1)
        &= \calF^h(\rho^{h,\tau}_1) + f \bigg( \frac{\Tilde{\rho}^{h,\tau}_1(x)}{h^d} \bigg) h^d - f \bigg( \frac{\rho^{h,\tau}_1(x)}{h^d} \bigg) h^d
        + f \bigg( \frac{\Tilde{\rho}^{h,\tau}_1(y) }{h^d} \bigg) h^d - f \bigg( \frac{\rho^{h,\tau}_1(y)}{h^d} \bigg) h^d \\
        &\leq \calF^h(\rho^{h,\tau}_1) - f'\bigg( \frac{\rho^{h,\tau}_1(x)}{h^d}\bigg) \gamma_{xy} + f'\bigg( \frac{\rho^{h,\tau}_1(y)}{h^d}\bigg) \gamma_{xy},
    \end{align*} 
    where we recall that $\rho^{h,\tau}_1(x) > M$ and $\rho^{h,\tau}_1(y) \leq M$, thus, $\calF^h(\Tilde{\rho}^{h,\tau}_1) < \calF^h(\rho^{h,\tau}_1)$. Hence $\calJ^{h,\tau}_{\rho^h_0}(\Tilde{\rho}^{h,\tau}_1) < \calJ^{h,\tau}_{\rho^h_0}(\rho^{h,\tau}_1)$ we obtain a contradiction with $\rho^{h,\tau}_1$ being a minimizer.
\end{remark}

\subsection{Main results}\label{sec:main-results}
Before we get to the main results, we study a toy model in Section~\ref{sec:potential-movement}. We consider evolution driven purely by a potential $V$, i.e., the energy functional is
$$
    \calF_V(\rho) = \int_\Omega V \dd \rho.
$$
This toy model is instructive because it illustrates the necessary relation between the time and space discretization parameters $\tau$ and $h$ for passing to the joint limit. Already for the toy model as well as the main results we need $h/\tau \to 0$. 

The first main result is the convergence of \eqref{eq:JKOh} for the nonlinear diffusion. We require different assumptions on the energy density $f$ depending on the presence or absence of the external potential $V$. If $V=0$, then we assume the following.

\vspace{0.3cm}
    \framebox{ \centering \begin{minipage}{0.92\linewidth}
        \textbf{Assumptions on the energy density.} The energy density $f: [0,+\infty) \to \R$ satisfies the following:
        \begin{equation}\label{ass:internal-energy}\tag{f0}
            f \text{ is convex and } C^2((0,+\infty))
        \end{equation}
        \begin{equation}\label{ass:internal-energy-superlinear}\tag{f1}
            f \text{ has superlinear growth at infinity.} 
        \end{equation}
    \end{minipage}}
\vspace{0.3cm}

The superlinearity assumption is quite standard in this setting. Removing it would require to re-define the functional $\mathcal F$ using the recession function $f^\infty$ (see, for intance, Chapter 7 in \cite{santambrogio2015optimal}). Moreover, this assumption guarantees equiintegrability of the densities provided by the scheme, which is crucial in some arguments throughout the paper. This requirement has been weakened in related papers, such as in \cite{BarHraSan} (but the arguments there are quite technical) or in \cite{caillet2024doubly} (but in such a paper equiintegrability is obtained by propagating in time the integrability properties of the initial density; the method to prove this strongly relied on displacement convexity and cannot be reproduced in our setting).

If the potential $V$ is present, then, in addition to \eqref{ass:internal-energy} and \eqref{ass:internal-energy-superlinear}, we need $f$ to satisfy a sort of quantifiable convexity assumption, which is made precise below.

\vspace{0.3cm}
    \framebox{ \centering \begin{minipage}{0.92\linewidth}
        \textbf{Assumptions on the energy density.} The energy density $f: [0,+\infty) \to \R$ satisfies \eqref{ass:internal-energy}, \eqref{ass:internal-energy-superlinear}, and the following:
        \begin{equation} \label{ass:internal-energy-plus-potential} \tag{fV}
            \begin{array}{l}
                \text{(i) } f' \text{ is strictly increasing; } \\
                \text{(ii) } \text{there exist } C_f> 0, ~\theta < 1 + 1/d, \text{ and } s_0 \geq 1 \text{ such that for any } s \geq s_0: \\
                \qquad f''(s) \geq C_f s^{-\theta} \\
                \text{(iii) } V \in \Lip(\R^d) \\
            \end{array}
        \end{equation}
    \end{minipage}}
\vspace{0.3cm}

    \begin{remark}
        For the energy densities $f(s) = s^m / (m - 1)$, $m>1$ corresponding to the porous media equation, \eqref{ass:internal-energy-plus-potential} follows directly from \eqref{ass:internal-energy-superlinear} with $\theta=1$. Yet, in general, there are examples of $f$ such as $f(s) = s^2 / 2 + \sin(s)$, which is convex and superlinear at infinity, but does not satisfy \eqref{ass:internal-energy-plus-potential}.
    \end{remark}

    
    

\begin{theorem}[Main result for nonlinear diffusion]\label{th:main-convergence-diffusion}
    Assume \eqref{ass:internal-energy}, \eqref{ass:internal-energy-superlinear}, and either $V=0$ or \eqref{ass:internal-energy-plus-potential}. Let $\{\rho^{h,\tau}_k\}_{k=1,\dots,N}$ be the family of \eqref{eq:JKOh} minimizers as in Definition~\ref{def:JKOh} with $\calF^h$ defined in \eqref{eq:disc-energy-nonlinear}. Let the family of initial data $\{\rho^h_0\}_{h>0}$ be such that there exists $\rho_0\in\calP(\Omega)$ with finite energy $\calF(\rho_0) < \infty$ such that
    $$
        \rho^h_0 \rightharpoonup \rho_0 \quad \text{narrowly as } h\to 0 \quad \text{and} \quad \lim_{h\to 0} \calF^h(\rho^h_0) = \calF(\rho_0).
    $$
    
    Then a suitable interpolation in time of $\{\rho^{h,\tau}_k\}_{k=1,\dots,N}$ converges as $h, \tau, h/\tau \to 0$ uniformly in time for the $W_2$ distance to an absolutely continuous curve $[0,T] \ni t \mapsto \rho_t \in \calP(\Omega)$ such that $\rho_t$ is the distributional solution of \eqref{eq:nonlinear-diff-pde}.
\end{theorem}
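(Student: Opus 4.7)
The plan is to carry out the Energy Dissipation Inequality strategy recalled in Proposition~\ref{prop:charact}: I will extract a limit curve $t\mapsto\rho_t$ from the discrete minimizers and show it satisfies \eqref{eq:EDI}. First, testing the minimality of $\rho^{h,\tau}_{k+1}$ against the competitor $\rho^{h,\tau}_k$ gives the one-step bound $\calF^h(\rho^{h,\tau}_{k+1})+\frac{1}{2\tau}W_2^2(\rho^{h,\tau}_{k+1},\rho^{h,\tau}_k)\leq \calF^h(\rho^{h,\tau}_k)$, which telescopes into
\[
 \calF^h(\rho^{h,\tau}_N)+\sum_{k=0}^{N-1}\frac{1}{2\tau}W_2^2(\rho^{h,\tau}_k,\rho^{h,\tau}_{k+1})\leq \calF^h(\rho^h_0).
\]
Combined with the assumption $\calF^h(\rho^h_0)\to\calF(\rho_0)$ and the superlinearity \eqref{ass:internal-energy-superlinear}, this yields equi-integrability of the piecewise-constant densities $\hat u^{h,\tau}_t$ and $1/2$-H\"older equicontinuity in $W_2$ of the piecewise-constant time interpolation (a Cauchy-Schwarz argument on consecutive jumps). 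A refined Ascoli-Arzel\`a argument then produces, along a subsequence, a uniform-in-time $W_2$-limit $[0,T]\ni t\mapsto\rho_t\in\calP(\Omega)$, with $\hat u^{h,\tau}_t\to u_t$ strongly enough to identify $\rho_t=u_t\calL^d$ and to pass $\calF$ to the limit at $t=T$ by lower semicontinuity.

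Next I would derive a \emph{discrete} EDI by introducing the De~Giorgi variational interpolation on the grid: for $s\in(0,\tau]$, let $\rho^{h,\tau}_{k,s}$ be a minimizer of $\calF^h(\rho)+\frac{1}{2s}W_2^2(\rho,\rho^{h,\tau}_k)$ over $\calP(\calT^h)$. A classical differentiation-in-$s$ argument yields at each step
\[
 \calF^h(\rho^{h,\tau}_{k+1})-\calF^h(\rho^{h,\tau}_k)+\frac{1}{2\tau}W_2^2(\rho^{h,\tau}_{k+1},\rho^{h,\tau}_k)+\int_0^\tau\frac{1}{2s^2}W_2^2(\rho^{h,\tau}_{k,s},\rho^{h,\tau}_k)\dd s\leq 0.
\]
To handle the middle kinetic term I would use the genuine $W_2$-geodesic in $\calP(\Omega)$ between $\rho^{h,\tau}_k$ and $\rho^{h,\tau}_{k+1}$ (emphatically \emph{not} constrained to the grid, as highlighted in the Introduction), producing a continuity equation $\partial_t\tilde\rho^{h,\tau}+\mathrm{div}(\tilde\rho^{h,\tau}\tilde v^{h,\tau})=0$ with $\frac{1}{2\tau}W_2^2=\int_{k\tau}^{(k+1)\tau}\frac12\int_\Omega|\tilde v^{h,\tau}_t|^2\dd\tilde\rho^{h,\tau}_t\dd t$. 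Summing in $k$ and applying Benamou-Brenier lower semicontinuity under narrow convergence of $(\tilde\rho^{h,\tau},\tilde\rho^{h,\tau}\tilde v^{h,\tau})$ produces, in the limit, $\int_0^T\frac12\int_\Omega|v_t|^2\dd\rho_t\dd t$.

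For the slope term, the Kantorovich-Euler-Lagrange conditions for $\rho^{h,\tau}_{k,s}$ furnish on its support a discrete relation of the form $f'(u^{h,\tau}_{k,s}(z))+V(z)-\frac{1}{s}\varphi_z=\lambda$, where $\varphi$ is a discrete Kantorovich potential for the optimal transport between $\rho^{h,\tau}_{k,s}$ and $\rho^{h,\tau}_k$. A discrete integration by parts along the edges of $\Sigma^h$, combined with a Taylor expansion in $h$, reconstructs a discrete analogue of $\frac12\int|\nabla(f'(u)+V)|^2\dd\rho$, that is, of $\calS$; taking $\liminf$ via lower semicontinuity of the Fisher information under $L^1$-convergence of $\hat u^{h,\tau}$ yields $\int_0^T\calS(\rho_t)\dd t$ as a lower bound. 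Assembling the three passages produces the continuous EDI \eqref{eq:EDI} for $(\rho,v)$, and Proposition~\ref{prop:charact} then identifies $\rho$ as a distributional solution of \eqref{eq:nonlinear-diff-pde}.

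The hard part will be the slope-to-Fisher-information passage. Discrete differences of $f'(u)+V$ on edges of length $h$ approximate true gradients only when the typical transport displacement (of order $\sqrt{s}\leq\sqrt{\tau}$) dominates the mesh, and more robustly the condition $h/\tau\to 0$ is needed to rule out the spurious ``frozen'' or lattice-induced-drift behaviour already exhibited in the potential toy model of Section~\ref{sec:potential-movement}. Controlling the Taylor remainders uniformly in $h$, handling the extra Lagrange multiplier coming from the grid constraint $\rho\in\calP(\calT^h)$, and, in the presence of a drift, exploiting the quantified convexity in \eqref{ass:internal-energy-plus-potential} to couple the discrete potential to $V$, are the delicate points where the hypothesis $h/\tau\to 0$ must be used quantitatively.
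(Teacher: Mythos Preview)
Your proof plan is essentially the same strategy as the paper's: De~Giorgi variational interpolation on the grid, geodesic (off-grid) interpolation for the kinetic term, optimality conditions linking the Kantorovich potential to $f'(u)+V$, and passage to the limit in a discrete EDI via lower semicontinuity. The compactness step and the Benamou--Brenier handling of the velocity are exactly what the paper does in Lemmas~\ref{lem:compactness-curves} and~\ref{lem:var-ineq}.

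The one place where your sketch is vaguer than the paper is the slope-to-Fisher step. What the paper actually does (Lemma~\ref{lemma:lower-bound-by-slope}) is not a ``Taylor expansion'' or an ``integration by parts'': it uses the $c$-concavity inequality $\varphi(x+\vh)+\psi(y)\leq \tfrac12|x+\vh-y|^2$ together with equality at $(x,y)\in\spt\gamma$ to obtain, edge by edge,
\[
\Big|(x-y)\cdot\tfrac{\vh}{h}\Big|\ \geq\ \frac{(\varphi(x+\vh)-\varphi(x))^+}{h}-\frac{h}{2},
\]
then squares via the elementary inequality of Lemma~\ref{lemma:inequality}, integrates against $\gamma$, and replaces $\varphi$ by $-\tau(f'(u)+V)$ through Lemma~\ref{lemma:optimality-cond}. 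This is precisely where the factors $(1-\tfrac{h}{2\tau})$ and the additive error $\tfrac{dh}{4\tau}$ appear, making the role of $h/\tau\to 0$ completely explicit. A further algebraic step, the inequality $\max(a,b)\,(f'(b)-f'(a))^2\geq(\ell(b)-\ell(a))^2$, converts the weighted $f'$-differences into unweighted $\ell$-differences, which is what allows the liminf of the discrete Fisher information to be taken via weak $L^2$-convergence of a discrete gradient (Lemma~\ref{lem:convergence-disc-grad}), rather than needing lower semicontinuity of a $\rho$-weighted functional. When $V\neq 0$, the monotonicity argument breaks on edges where the sign of $f'(u_x)-f'(u_y)$ is reversed by $V$; Corollary~\ref{cor:slope-bound-with-V} controls the resulting error using \eqref{ass:internal-energy-plus-potential}, which matches your final paragraph. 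So your outline is on target, but the concrete mechanism is a direct componentwise $c$-concavity bound rather than a Taylor/IBP argument.
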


\begin{remark}\label{remark:interaction-1}
    Another common example of driving energy is interaction energy that takes the form
    \begin{equation*}
        \calF_W(\rho) = \int_\Omega (W*\rho) (x) \rho(\dd x)
    \end{equation*}
    with the natural discrete counterpart
    $$
        \calF^h_W(\rho^h) = \sum_{x,y\in\calT^h\times\calT^h} W(x - y) \rho^h_x \rho^h_y.
    $$
    Assuming that the interaction potential $W: \R^d \to \R$ is regular enough, i.e. symmetric, non-negative, and $W\in \Lip(\R^d) \cap C^1(\R^d \backslash \{0\})$, the proof of Theorem~\ref{th:main-convergence-diffusion} can be adapted to deal with the sum of the internal and interaction energy instead of the internal and potential energy. This means that \eqref{eq:JKOd-intro} approximates the solutions of the aggregation-diffusion equation.

    We choose not to focus on the interaction energy in this paper, because the optimization problem in \eqref{eq:JKOd-intro} becomes non-convex, which is less attractive from the numerical point of view presented in Section~\ref{sec:intro-numerics}. Nevertheless, the adaptation of the proofs for $W*\rho$ instead of $V$ seems not to be too difficult, and we comment on them in Remarks~\ref{remark:interaction-2} and \ref{remark:interaction-3}.
\end{remark}

For the crowd motion model, we need a smoother potential.

\vspace{0.3cm}
    \framebox{ \centering \begin{minipage}{0.92\linewidth}
        \textbf{Assumptions on the potential for the crowd motion model.}
        \begin{equation}\label{ass:crwod-motion-energy} \tag{pV}
            V\in C^1(\R^d)
        \end{equation}
    \end{minipage}}
\vspace{0.3cm}

The main convergence result in this case is as follows.

\begin{theorem}[Main result for crowd motion] \label{th:main-convergence-crowd-motion}
    Assume \eqref{ass:crwod-motion-energy}. Let $\{\rho^{h,\tau}_k\}_{k=1,\dots,N}$ be the family of \eqref{eq:JKOh} minimizers as in Definition~\ref{def:JKOh} with $\calF^h$ defined in \eqref{eq:disc-energy-crowd-motion}.  Let the family of initial data $\{\rho^h_0\}_{h>0}$ be such that there exists $\rho_0 = u_0 \calL^d \in\calP(\Omega)$ with $\|u_0\|_{L^\infty} \leq 1$ such that
    $$
        \rho^h_0 \rightharpoonup \rho_0 \quad \text{narrowly as } h\to 0 \quad \text{and} \quad \lim_{h\to 0} \Fcm(\rho^h_0) = \calF_\text{CM}(\rho_0).
    $$

     Then a suitable interpolation in time of $\{\rho^{h,\tau}_k\}_{k=1,\dots,N}$ converges as $h, \tau, h/\tau \to 0$ uniformly in time for the $W_2$ distance to an absolutely continuous curve $[0,T] \ni t \mapsto \rho_t \in \calP(\Omega)$ such that $\rho_t$ is the distributional solution of \eqref{eq:crowd-motion-PDE}.
\end{theorem}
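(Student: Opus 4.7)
The plan is to apply the characterization in Proposition~\ref{prop:charact-crowd-motion}: it suffices to produce a triple $(\rho,p,v)$ with $\rho\in C([0,T];\calP(\Omega))$, a pressure $p\in L^\infty([0,T];H^1(\Omega))$ satisfying $p(1-u)=0$ a.e., and a velocity $v\in L^2(\dd\rho\,\dd t)$, solving the continuity equation \eqref{eq:CE} and the EDI \eqref{eq:EDI-crowd-motion}. Following the scheme already announced in the introduction and used for Theorem~\ref{th:main-convergence-diffusion}, we establish a \emph{discrete} one-step EDI for \eqref{eq:JKOh} by combining the De Giorgi variational interpolation, which produces a slope term that limits to the Fisher information $\Scm$, with the full-space Wasserstein geodesic interpolation, which produces a kinetic term that limits to $\tfrac12\int|v|^2\dd\rho$. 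We then pass each ingredient to the joint limit $h,\tau,h/\tau\to 0$.

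Compactness is relatively classical. Using the iterative optimality $\calJ^{h,\tau}_{\rho^{h,\tau}_k}(\rho^{h,\tau}_{k+1})\le \calJ^{h,\tau}_{\rho^{h,\tau}_k}(\rho^{h,\tau}_k)$ together with the assumption $\lim_h \Fcm^h(\rho^h_0)=\Fcm(\rho_0)<\infty$, we obtain a uniform bound $\Fcm^h(\rho^{h,\tau}_k)\le \Fcm^h(\rho^h_0)$ and, by telescoping, $\sum_k W_2^2(\rho^{h,\tau}_{k+1},\rho^{h,\tau}_k)\le 2\tau\bigl(\Fcm^h(\rho^h_0)-\inf \Fcm^h\bigr)$. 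Cauchy--Schwarz converts this into a Hölder-$\tfrac12$ estimate in $W_2$ for the piecewise constant interpolation $\rho^{h,\tau}$, and Ascoli--Arzelà produces, along a subsequence, a uniform $W_2$ limit $\rho\in C([0,T];\calP(\Omega))$. The piecewise constant reconstruction $\hat u^{h,\tau}$ satisfies $\hat u^{h,\tau}\le 1$, a property that passes to the weak-$\ast$ $L^\infty$ limit, yielding $\rho_t=u_t\calL^d$ with $u_t\le 1$. For the velocity, the piecewise-in-time $W_2$-geodesic interpolation between consecutive $\rho^{h,\tau}_k,\rho^{h,\tau}_{k+1}$ (in $\calP(\Omega)$, \emph{not} restricted to the grid) carries a Benamou--Brenier velocity $v^{h,\tau}$ with $\tfrac12\int_0^T\!\!\int|v^{h,\tau}|^2\dd\rho^{h,\tau}_t\dd t=\tfrac{1}{2\tau}\sum_k W_2^2(\rho^{h,\tau}_{k+1},\rho^{h,\tau}_k)$; weak compactness of the momentum $\rho^{h,\tau}v^{h,\tau}$ and lower semicontinuity of the kinetic action give a limit $v$ with $(\rho,v)$ solving \eqref{eq:CE} and the velocity term of \eqref{eq:EDI-crowd-motion}.

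The heart of the proof is the construction of the pressure and the passage to the limit in the Fisher term. From the optimality of $\rho^{h,\tau}_{k+1}$ together with the density constraint one extracts a discrete Kantorovich potential $\phi^{h,\tau}_k:\calT^h\to\R$ for the transport from $\rho^{h,\tau}_{k+1}$ to $\rho^{h,\tau}_k$ and a Lagrange multiplier $p^{h,\tau}_{k+1}\ge 0$ on $\calT^h$, supported in $\{u^{h,\tau}_{k+1}=1\}$, obeying $\phi^{h,\tau}_k/\tau+V+p^{h,\tau}_{k+1}=c$ on $\spt(\rho^{h,\tau}_{k+1})$; in particular a discrete analogue of $\nabla V+\nabla p$ appears as the grid gradient of $\phi/\tau$. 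The De Giorgi variational interpolation $s\mapsto\rho^{h,\tau}_{k,s}\in\arg\min\{\Fcm^h(\rho)+\tfrac{1}{2s}W_2^2(\rho,\rho^{h,\tau}_k)\}$, combined with standard differentiation of the value function and summation over $k$, then yields the discrete EDI
\begin{equation*}
    \Fcm^h(\rho^{h,\tau}_N)-\Fcm^h(\rho^h_0)+\frac{1}{2\tau}\sum_{k=0}^{N-1} W_2^2(\rho^{h,\tau}_{k+1},\rho^{h,\tau}_k)+\sum_{k=0}^{N-1}\int_0^\tau \Scm^h(\rho^{h,\tau}_{k,s})\,\dd s\le 0,
\end{equation*}
where $\Scm^h$ is the discrete Fisher information built from the grid gradient of $\phi^{h,\tau}_{k,s}/s$. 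The main obstacle is the joint convergence of this Fisher term: it requires (i) strong $L^1$ compactness of $\hat u^{h,\tau}$ (Dunford--Pettis, from the uniform $L^\infty$ bound), (ii) a weak $H^1$ limit for a suitable continuous interpolant of the discrete pressure $p^{h,\tau}$, (iii) stability of the complementarity $p(1-u)=0$ under these convergences (using the concentration of $p^{h,\tau}$ on the saturated set combined with the strong $L^1$ convergence of $u$), and (iv) convexity-based lower semicontinuity of $(u,J)\mapsto\tfrac12\int|J|^2/u$ applied with $J=u(\nabla V+\nabla p)$. Exactly here the condition $h/\tau\to 0$ is decisive: the Kantorovich potential lives on $\calT^h$ and its grid gradient differs from the gradient of any smooth reconstruction by $O(h)$, an error that must be negligible against the $1/\tau$ normalization, so that the discrete gradient of $\phi/\tau$ converges to $-(\nabla V+\nabla p)$ rather than being contaminated by spurious drifts of the kind observed in the toy model of Section~\ref{sec:potential-movement}. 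Combining (i)--(iv) with the kinetic and energy convergences of the previous paragraph yields the EDI \eqref{eq:EDI-crowd-motion}, and Proposition~\ref{prop:charact-crowd-motion} then concludes.
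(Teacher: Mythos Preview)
Your overall architecture matches the paper's: De Giorgi variational interpolation for the slope/Fisher term, Wasserstein geodesic interpolation for the kinetic term, then pass to the limit in a discrete EDI and invoke Proposition~\ref{prop:charact-crowd-motion}. But two of the steps you flag as routine are in fact the places where the argument has real content, and as written your sketch would not go through.

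First, the discrete slope bound. You display a clean inequality with a term $\sum_k\int_0^\tau\Scm^h(\rho^{h,\tau}_{k,s})\,\dd s$, but the De Giorgi argument only gives $\int_0^1\frac{1}{2\tau s^2}W_2^2(\tilde\rho^{h,\tau}_{(k+s)\tau},\rho^{h,\tau}_k)\,\dd s$, and the passage from that to a Fisher term is exactly where the grid enters. The paper's mechanism (Lemma~\ref{lemma:lower-bound-by-slope} adapted in Lemma~\ref{lem:lower-bound-by-slope-CM}) uses the Kantorovich optimality on neighboring grid points to get $(x-y)\cdot\vh/h\ge(\varphi(x+\vh)-\varphi(x))/h-h/2$, squares, and absorbs the $-h/2$ via an elementary inequality; this produces the multiplicative factor $(1-h/(2\tau s))$ and the additive error $-dh/(4\tau s)$. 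The latter is \emph{not} integrable in $s$ near $0$, which forces an $\varepsilon$-cutoff in the time integration (the measure $\lambda_\varepsilon^{h,\tau}$) and an extra $\log\varepsilon$ error that only vanishes because $h/\tau\to 0$. Your explanation that ``the grid gradient differs by $O(h)$, negligible against $1/\tau$'' captures the scaling but not the structure, and in particular misses that the bound fails as $s\to 0$. More seriously, in the crowd-motion case the analogue of the monotonicity $f'(u_x)\ge f'(u_y)\Rightarrow u_x\ge u_y$ is unavailable: the pressure is not a strictly increasing function of the density, so the device that converts $(\text{grad }\varphi)^2 u_x$ into an unweighted squared gradient breaks down. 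The paper's fix is to replace $p$ by a truncation $g_{\varepsilon(h)}(p)=(p-\varepsilon(h))^+$ with $\varepsilon(h)=(\Lip(V)+1)h$; only after this truncation can one drop the density weight and obtain a usable discrete $H^1$ bound on the pressure. This truncation is invisible in your sketch but is the key technical idea of Section~\ref{sec:crowd-motion}.

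Second, the complementarity $p(1-u)=0$ in the limit. You invoke ``strong $L^1$ compactness of $\hat u^{h,\tau}$ (Dunford--Pettis, from the uniform $L^\infty$ bound)'', but Dunford--Pettis gives only \emph{weak} $L^1$ compactness, and indeed there is no reason for the densities to converge strongly (no spatial regularity is available on $u$). With only $u^{h,\tau}\rightharpoonup u$ weak-$*$ in $L^\infty$ and $p^{h,\tau}\rightharpoonup p$ weakly in $L^2$, the product $p^{h,\tau}(1-u^{h,\tau})$ does not automatically converge. The paper (Theorem~\ref{th:compactness-CM}, Step~3) handles this by mollifying the pressure in space, using the discrete $H^1$ translation estimate on $g_{\varepsilon(h)}(p^{h,\tau})$ to control $\|p^{h,\tau}-m_\epsilon*p^{h,\tau}\|_{L^2}$ uniformly, and exploiting the time-Hölder $W_2$ bound to control the commutator between the time average and the density; strong compactness is recovered on the \emph{pressure side} (for time-averages $\intbar_s^t p_r\,\dd r$), not on the density. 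Your step (iii) therefore rests on an assertion that is false, and the actual argument is closer in spirit to the one in \cite{maury2010macroscopic}.
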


\section{Evolution driven by a potential}\label{sec:potential-movement}

In this section, we consider the fully discrete JKO scheme \eqref{eq:JKOh} with the potential energy
$$
    \calF^h(\rho^h) = \sum_{x\in\calT^h} V(x) \rho^h_x.
$$
We study this case as a toy example to understand the relation between discretization-in-space parameter $h$ and time step parameter $\tau$ that are needed to pass to the joint limit $h,\tau\to 0$. We see in Section~\ref{sec:rel-h-tau} that the condition $h/\tau \to 0$ is a necessary condition for the convergence of \eqref{eq:JKOh} to hold. In Section~\ref{sec:conv-JKOh-V}, we show the convergence of \eqref{eq:JKOh} for the potential energy with $V\in C^{1,1}(\R^d)$.

\subsection{Relation between $h$ and $\tau$}\label{sec:rel-h-tau}
We observe that the one step of \eqref{eq:JKOh} can be rewritten as a linear programming problem:
\begin{align*}
    \min_{\rho^h} \calJ^{h,\tau}_{\rho^h_0} (\rho^h)
    &= \min_{\rho^h} \bigg\{ \sum_{x\in\calT^h} V(x) \rho^h_x + \frac{1}{2\tau} \min_{\gamma\in\Gamma(\rho^h_0,\rho^h)} \Big\{ \sum_{x,y\in\calT^h\times\calT^h} |x-y|^2 \gamma^h_{xy} \Big\} \bigg\} \\
    &= \min_{\gamma^h, {P^1}_{\#} \gamma^h =\rho^h_0} \bigg\{ \sum_{x,y\in\calT^h\times\calT^h} \Big( V(x) + \frac{1}{2\tau} |x-y|^2  \Big) \gamma^h_{xy} \bigg\}.
\end{align*}
With this simple energy, one can consider the movement of each Dirac mass $\rho^h_0(x) \delta_{x(t)}$ independently of the others. Thus, to find the minimizer of $\calJ^{h,\tau}_{\rho^h_0} $ it is enough to solve for all $x_0\in\spt(\rho^h_0)$:
\begin{equation}\label{eq_minimization}\tag{Min-$\calT_h$}
    \min_{x\in\calT^h} \Big\{ V(x) + \frac{1}{2\tau} |x - x_0|^2 \Big\}.
\end{equation}

Here, we can make the first observation about the relation between $h$ and $\tau$. If $x_1$ is the minimizer of \eqref{eq_minimization}, then
$$
    V(x_1) + \frac{1}{2\tau} |x_1 - x_0|^2 \leq V(x_0).
$$
A simple rewriting gives
$$
    \frac{1}{2\tau} |x_1 - x_0| \leq \frac{V(x_0) - V(x_1)}{|x_1 - x_0|} \leq \Lip(V).
$$
Since the distance between two different points in $\calT^h$ is at least $h$, then a necessary condition for $x_1$ to be different from $x_0$ is
\begin{equation}\label{eq:h-tau-relation}
    \frac{h}{\tau} \leq 2 \Lip(V).
\end{equation}
Consequently, if \eqref{eq:h-tau-relation} does not hold, then the evolution is frozen.

To get error estimates, we compare the minimization problem \eqref{eq_minimization} constrained to the discrete set $\calT^h$ with its counterpart without the constraint:
\begin{equation}\label{eq_minimization_cont}\tag{Min-$\R^d$}
    \min_{x\in\\R^d} \Big\{ V(x) + \frac{1}{2\tau} |x - x_0|^2 \Big\}.
\end{equation}
We begin with even simpler example of the linear potential.
\begin{example}[Linear potential]\label{ex:linear-potential} Let $V(x) = -v\cdot x$ with some constant velocity $v\in\R^d$. The minimizer of \eqref{eq_minimization_cont} is clearly $x_1 = x_0 + \tau v$. Let $x_1^h$ be the minimizer of \eqref{eq_minimization}. Note that 
$$
    -v\cdot x + \frac{1}{2\tau} |x - x_0|^2 
    = \frac{1}{2\tau} \Big( \big|x - (x_0 + \tau v) \big|^2 - |x_0 + \tau v|^2 + x_0^2 \Big)
    = \frac{1}{2\tau} \big|x - (x_0 + \tau v) \big|^2 - v\cdot x_0 - \frac{\tau}{2} |v|^2,
$$
therefore
$$
    x_1^h \in \arg\min_{x\in\calT^h} \Big\{ -v\cdot x + \frac{1}{2\tau} |x - x_0|^2 \Big\} = \arg\min_{x\in\calT^h} \Big\{ \big|x - (x_0 + \tau v) \big|^2 \Big\}.
$$
This implies that $x_1^h = \mathrm{Proj}_{\calT^h} (x_1)$ and $|x_1 - x_1^h| \leq \sqrt{d} h / 2$ for all $x\in \R^d$. Furthermore, since the velocity does not depend on $x$, exactly the same error occurs at every iteration
\begin{align*}
    |x_k - x_k^h| = k |x_1 - x_1^h| \qquad k\in\N.
\end{align*}
We then obtain $|x_1 - x_1^h| \sim h$ (except for some particular combinations of $v$ and $h$). Thus, for $t = k\tau$, we have
$$
    |x_k - x_k^h| = t \frac{|x_1 - x_1^h|}{\tau} \approx t \frac{h}{\tau}.
$$
Therefore, $h/\tau \to 0$ is a necessary condition for the error to vanish as $h,\tau\to 0$.
\end{example}
Example~\ref{ex:linear-potential} shows that the assumption $h/\tau \to 0$ is necessary for the convergence of \eqref{eq:JKOh} to the expected limiting minimizer. We assume $h/\tau \to 0$ in all the subsequent convergence results.

\subsection{Convergence of the fully discrete JKO with potential}\label{sec:conv-JKOh-V}
Let $V$ be an arbitrary $C^{1,1}(\R^d)$ potential. Denote by $x_1$ the minimizer of \eqref{eq_minimization_cont}. One can rewrite \eqref{eq_minimization} in the following way using the optimality condition $x_1 = x_0 - \tau \nabla V(x_1)$:
\begin{align*}
    V(x) + \frac{1}{2\tau} (x - x_0)^2
    &= \frac{1}{2\tau} |x - x_1|^2 + V(x) + \frac{1}{2\tau} |x - x_0|^2 - \frac{1}{2\tau} |x - x_0 + \tau \nabla V(x_1) |^2 \\
    &= \frac{1}{2\tau} |x - x_1|^2 + V(x) - x \cdot  \nabla V(x_1) + (\text{terms independent of } x) \\
    &= \frac{1}{2\tau} |x - x_1|^2 + V(x) - V(x_1) - (x - x_1) \cdot  \nabla V(x_1) + (\text{independent of } x).
\end{align*}
Therefore, \eqref{eq_minimization} is equivalent to 
\begin{equation*}
    \min_{x\in\calT^h} \Big\{ \frac{1}{2\tau} |x - x_1|^2 + V(x) - V(x_1) - (x - x_1) \cdot  \nabla V(x_1) \Big\}.
\end{equation*}
We see the later formulation to have two parts: the distance to the minimizer of \eqref{eq_minimization_cont} $x_1$ and the part depending on the potential $V$. Since $V\in C^{1,1}(\R^d)$, it holds that
\begin{align*}
    |V(x) - V(x_1) - (x - x_1) \cdot  \nabla V(x_1)|
    &= \Big| \int_0^1 \big( \nabla V(x_1 + \lambda (x - x_1) ) - \nabla V(x_1) \big) \dd \lambda \cdot (x - x_1) \Big| \\
    &\leq \frac{|\nabla V|_\Lip}{2} |x - x_1|^2,
\end{align*}
where $|\cdot|_\Lip$ is the Lipshitz constant of $\nabla V$. 

We use two proximal operators defined as
\begin{equation*}
    \mathrm{Prox}_V^{h,\tau} (z) \coloneq \arg\min_{x\in\calT^h} \Big\{ V(x) + \frac{|x - z|^2}{2\tau} \Big\} \quad \text{ and } \quad
    \mathrm{Prox}_V^{\tau} (z) \coloneq \arg\min_{x\in\R^d} \Big\{ V(x) + \frac{|x - z|^2}{2\tau} \Big\},
\end{equation*}
and consider the corresponding sequences of minimizers. Given an arbitrary $x_0\in\R^d$, we denote by $\{x_k^h\}_{k=1,\dots,N} \subset \calT^h$ the sequence of the minimizers defined as $x_k^h \coloneq \mathrm{Prox}_V^{h,\tau} (x_{k-1}^h)$ and by $\{x_k\}_{k=1,\dots,N} \subset \R^d$ the sequence of the minimizers defined as $x_k \coloneq \mathrm{Prox}_V^{\tau} (x_{k-1})$.

\begin{lemma} Consider the time interval $[0,T]$ with $T = N\tau$ for some $N\in\N$. Then the error estimate between the two sequences reads as
$$
    |x^h_N - x_N| \leq C(T, |\nabla V|_{\Lip}) \,\frac{h}{\tau}.
$$
\end{lemma}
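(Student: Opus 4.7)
The plan is to compare one step of the discrete and continuous proximal iterations, and then propagate the error via a stability estimate for the continuous proximal operator. Throughout, set $L \coloneq |\nabla V|_{\Lip}$ and restrict to $\tau$ small enough that $\tau L < 1$.

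\textbf{Step 1 (one-step error).} Fix $z \in \R^d$ and let $y \coloneq \mathrm{Prox}_V^{\tau}(z)$ and $y^h \coloneq \mathrm{Prox}_V^{h,\tau}(z)$. Following the reformulation already carried out before the statement, the discrete minimization starting from $z$ is equivalent to
\[
    \min_{x\in\calT^h}\Big\{\tfrac{1}{2\tau}|x-y|^2 + R(x)\Big\}, \qquad R(x) \coloneq V(x)-V(y)-(x-y)\cdot\nabla V(y),
\]
with $|R(x)|\leq \frac{L}{2}|x-y|^2$ by the $C^{1,1}$ estimate established right before the lemma. Inserting the competitor $\tilde y \coloneq \mathrm{Proj}_{\calT^h}(y)$ (for which $|\tilde y - y|\leq \sqrt{d}\,h/2$) into the discrete problem and using the Lipschitz bound on $R$ on both sides yields
\[
    \tfrac{1-\tau L}{2\tau}|y^h-y|^2 \leq \tfrac{1+\tau L}{2\tau}|\tilde y - y|^2 \leq \tfrac{1+\tau L}{2\tau}\cdot\tfrac{d h^2}{4},
\]
hence $|y^h-y|\leq C_1 h$ with $C_1 = C_1(d,L)$.

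\textbf{Step 2 (Lipschitz stability of the continuous prox).} If $y = \mathrm{Prox}_V^{\tau}(x)$ and $y' = \mathrm{Prox}_V^{\tau}(x')$, the first-order conditions give $y+\tau\nabla V(y)=x$ and $y'+\tau\nabla V(y')=x'$. Subtracting and using $L$-Lipschitzness of $\nabla V$,
\[
    (1-\tau L)|y-y'|\leq |x-x'|,
\]
so $\mathrm{Prox}_V^{\tau}$ is Lipschitz with constant at most $(1-\tau L)^{-1}$.

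\textbf{Step 3 (error propagation).} Let $e_k \coloneq |x_k^h - x_k|$, so $e_0 = 0$. Writing
\[
    x_{k+1} = \mathrm{Prox}_V^{\tau}(x_k), \qquad x_{k+1}^h = \mathrm{Prox}_V^{h,\tau}(x_k^h),
\]
and inserting the intermediate point $\mathrm{Prox}_V^{\tau}(x_k^h)$ via the triangle inequality, Steps 1--2 give
\[
    e_{k+1}\leq C_1 h + \frac{1}{1-\tau L}\,e_k.
\]
Iterating from $e_0=0$,
\[
    e_N \leq C_1 h \sum_{k=0}^{N-1}\frac{1}{(1-\tau L)^k} = C_1 h\,\frac{(1-\tau L)^{-N}-1}{(1-\tau L)^{-1}-1}
     \leq \frac{C_1 h}{\tau L}\,(1-\tau L)^{1-N}.
\]
Since $N\tau = T$ and $(1-\tau L)^{-N}\leq e^{N\tau L/(1-\tau L)} \leq e^{2TL}$ for $\tau L\leq 1/2$, this yields $e_N \leq C(T,L)\,h/\tau$, as claimed.

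\textbf{Expected obstacles.} The argument is essentially a discrete Gronwall: the only subtlety is balancing the multiplicative accumulation factor $(1-\tau L)^{-N}$, which stays bounded because $N\tau = T$ is fixed, against the purely additive one-step error of size $O(h)$; the $1/\tau$ in the final bound comes from summing $N = T/\tau$ terms. The restriction $\tau L < 1$ is harmless since we are interested in the regime $\tau \to 0$.
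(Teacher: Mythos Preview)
Your proof is correct and follows essentially the same route as the paper: decompose the one-step error into the grid-discretization error $|\mathrm{Prox}_V^{h,\tau}(x_k^h)-\mathrm{Prox}_V^{\tau}(x_k^h)|$ and the continuous prox stability error $|\mathrm{Prox}_V^{\tau}(x_k^h)-\mathrm{Prox}_V^{\tau}(x_k)|$, then propagate via a geometric sum. Your Step~1 is in fact slightly cleaner than the paper's version (the paper keeps a cross term and solves a quadratic inequality, whereas you bound $R(y^h)$ and $R(\tilde y)$ directly), and your Step~2 uses the triangle inequality on $y+\tau\nabla V(y)=x$ rather than the paper's inner-product argument, but both yield the identical Lipschitz constant $(1-\tau L)^{-1}$ and the final Gronwall step is the same.
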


\begin{proof} 
The difference between $x^h_{k+1}$ and $x_{k+1}$ can be decomposed into two parts:
$$
    |x^h_{k+1} - x_{k+1}| \leq |\mathrm{Prox}_V^{h,\tau}(x^h_k) - \mathrm{Prox}_V^{\tau}(x^h_k)| + |\mathrm{Prox}_V^{\tau}(x^h_k) - \mathrm{Prox}_V^{\tau}(x_k)|.
$$
The first term is the local error between minimization over $\R^d$ and minimizing over $\calT^h$ with the same initial point $x^h_k$. The second term is the difference of minimizing over $\R^d$ for two different initial points $x^h_k$ and $x_k$. 

To estimate the first type of error $|\mathrm{Prox}_V^{h,\tau}(x_0) - \mathrm{Prox}_V^{\tau}(x_0)|$ for an arbitrary $x_0\in\R^d$, we use as a competitor a projection of $x_1$ on $\calT^h$ denoted by $\Tilde{x}_1$ ($|x_1 - \Tilde{x}_1| \leq h$):
\begin{align*}
    \frac{1}{2\tau} (x^h_1 - x_1)^2 &+ V(x^h_1) -V(x_1) - (x^h_1 - x_1) \cdot  \nabla V(x_1) \\
    &\leq \frac{1}{2\tau} (\Tilde{x}_1 - x_1)^2 + V(\Tilde{x}_1) - V(x_1) - (\Tilde{x}_1 - x_1) \cdot  \nabla V(x_1).
\end{align*}
Rearranging the terms gives
\begin{align*}
    (x^h_1 - x_1)^2 &\leq h^2 + 2\tau \big( V(\Tilde{x}_1) - V(x^h_1) - (\Tilde{x}_1 - x^h_1) \cdot \nabla V(x_1) \big) \\
    &= h^2 + 2\tau \int_0^1 \big( \nabla V(x^h_1 + \lambda (\Tilde{x}_1 - x^h_1)) - \nabla V(x_1) \big) \dd\lambda \cdot (\Tilde{x}_1 - x^h_1) \\
    &\leq h^2 + 2\tau \int_0^1 |\nabla V|_\Lip |(\lambda - 1) (x_1 - x^h_1) + \lambda (\Tilde{x}_1 - x_1) | \dd \lambda \, |\Tilde{x}_1 - x^h_1| \\
    &\leq h^2 + \tau |\nabla V|_\Lip \big( |x_1 - x^h_1| + h \big)^2
\end{align*}
Expanding the square, we obtain
\begin{align*}
    (1 - |\nabla V|_{\Lip} \tau) |x^h_1 - x_1|^2 - 2 |\nabla V|_{\Lip} \tau h |x^h_1 - x_1| \leq (1 + |\nabla V|_{\Lip} \tau) h^2 
\end{align*}
Let $\tau > 0$ be such that $\tau \leq (2|\nabla V|_{\Lip})^{-1}$, then
\begin{align*}
    \bigg( |x^h_1 - x_1| - \frac{|\nabla V|_{\Lip} \tau h}{1 - |\nabla V|_{\Lip} \tau} \bigg)^2
    \leq \frac{h^2}{(1 - |\nabla V|_{\Lip} \tau)^2},
\end{align*}
and we get
\begin{align*}
    |x^h_1 - x_1| \leq \frac{1 + |\nabla V|_{\Lip} \tau}{1 - |\nabla V|_{\Lip} \tau} h \leq (1 + 4 |\nabla V|_{\Lip} \tau) h,
\end{align*}
where we use the simple inequality $(1 + x) / (1 - x) \leq 1 + 4x$ that holds for any $x \in [0, 1/2]$.
Thus, for small enough $\tau$, the error introduced by the grid is bounded as
\begin{align*}
    |\mathrm{Prox}_V^{h,\tau}(x_0) - \mathrm{Prox}_V^{\tau}(x_0)| \leq  (1 + 4 |\nabla V|_{\Lip} \tau) h \eqcolon e_{h,\tau} \qquad \forall x_0 \in \R^d.
\end{align*}

To estimate the second part of the error, precisely, $|\mathrm{Prox}_V^{\tau}(x^h_1) - \mathrm{Prox}_V^{\tau}(x_1)|$, we write the optimality conditions:
\begin{align*}
    \nabla V(x_2) + \frac{x_2 - x_1}{\tau} &= 0 \qquad \text{for } x_2 = \mathrm{Prox}_V^{\tau}(x_1) \\
    \nabla V(y_2) + \frac{y_2 - x^h_1}{\tau} &= 0 \qquad \text{for } y_2 = \mathrm{Prox}_V^{\tau}(x^h_1).
\end{align*}
Subtracting the second equation from the first one and multiplying the difference with $x^h_2 - y_2$, we get
\begin{align*}
    (x_2 - y_2) \cdot \big( \nabla V(x_2) - \nabla V(y_2) \big) + (x_2 - y_2) \cdot \frac{x_2 - y_2 + x^h_1 - x_1}{\tau} = 0.
\end{align*}
We rearrange the terms  as
\begin{align*}
    (1 - |\nabla V|_\Lip \tau) |x_2 - y_2| \leq |x_1 - x^h_1|.
\end{align*}
A similar estimate can be obtained for any time step $k+1$:
$$
    (1 - |\nabla V|_\Lip \tau) |x_{k+1} - y_{k+1}| \leq |x_k - x^h_k|,
$$
where $y_{k+1} = \mathrm{Prox}_V^{\tau}(x^h_k)$.
Then the total error at time step $k+1$ is bounded as
\begin{align*}
    |x_{k+1} - x_{k+1}^h|
    &\leq |x_{k+1} - y_{k+1}| + |y_{k+1} - x_{k+1}^h|
    \leq \frac{1}{1 - |\nabla V|_\Lip \tau} |x_k - x^h_k| + e_{h,\tau} \\
    &\leq e_{h,\tau} \sum_{j=0}^k \frac{1}{(1 - |\nabla V|_\Lip \tau)^j}  
    \leq e_{h,\tau} \sum_{j=0}^k (1 + 2 |\nabla V|_\Lip \tau)^j
    \leq e_{h,\tau} \frac{(1 + 2 |\nabla V|_\Lip \tau)^{k+1} - 1}{2|\nabla V|_\Lip \tau} \\
    &\leq e_{h,\tau} \frac{1}{|\nabla V|_\Lip \tau} e^{2|\nabla V|_\Lip T},
\end{align*}
where we use that $T = k \tau$. Recalling the estimate for $e_{h,\tau}$, we finally obtain
$$
    |x_{k+1} - x_{k+1}^h| \leq \frac{3}{|\nabla V|_\Lip} e^{2|\nabla V|_\Lip T} \frac{h}{\tau}
$$
for $\tau \leq (2|\nabla V|_{\Lip})^{-1}$.
\end{proof}

\section{Nonlinear diffusion}\label{sec:nonlinear-diffusion}

In this section, we consider the fully discrete JKO scheme \eqref{eq:JKOh} for the energy functional of the form
\begin{equation}\label{eq:energy-f-V-discrete}
    \calF^h(\rho^h) = \sum_{x\in\calT^h} f \bigg( \frac{\rho^h_x}{h^d} \bigg) h^d + \sum_{x\in\calT^h} V(x) \rho^h_x
\end{equation}
with $f$ satisfying \eqref{ass:internal-energy}, \eqref{ass:internal-energy-superlinear}, and either $V=0$ or \eqref{ass:internal-energy-plus-potential}. The goal is to prove the convergence of \eqref{eq:JKOh} stated in Theorem~\ref{th:main-convergence-diffusion}.

To explain the strategy that we will use to prove Theorem~\ref{th:main-convergence-diffusion}, we first recall the idea of the convergence proof for the standard JKO scheme and then indicate the adaptations required for the fully discrete case. The approach we have in mind relies on the variational (De Giorgi) interpolation, explained, for instance in \cite[Chapter~3]{ambrosio2008gradient}. Given the minimizers $\{\rho^\tau_k\}$ of \eqref{eq:JKO-intro}, the idea is to prove the inequality 
\begin{equation}\label{eq:var-ineq}
    \calF(\rho^\tau_{k+1}) - \calF(\rho^\tau_k) + \frac{W_2^2(\rho^\tau_k, \rho^\tau_{k+1})}{2\tau} + \int_0^1 \frac{W_2^2(\rho^\tau_k, \rho^\tau_r)}{2\tau r^2} dr \leq 0,
\end{equation}
where $\rho^\tau_r$ is the variational interpolant between $\rho^\tau_k$ and $\rho^\tau_{k+1}$. Combining \eqref{eq:var-ineq} with the lower bound on the Wasserstein distance by the slope of the energy \cite[Lemma~3.1.1]{ambrosio2008gradient}
\begin{equation}\label{eq:slope-bound}
    \frac{1}{2\tau^2 r^2} W_2^2(\rho^\tau_k, \rho^\tau_r) \geq |\partial \calF'|^2 (\rho^\tau_r),
\end{equation}
one gets 
\begin{equation}\label{eq:disc-EDI-one-step}
    \calF(\rho^\tau_{k+1}) - \calF(\rho^\tau_k) + \frac{W_2^2(\rho^\tau_k, \rho^\tau_{k+1})}{2\tau} + \frac{\tau}{2} \int_0^1 |\partial \calF'|^2(\rho^\tau_r) \, dr \leq 0.
\end{equation}
Summing up \eqref{eq:disc-EDI-one-step} for all discrete time steps gives
\begin{equation}\label{eq:disc-time-EDI}
    \calF(\rho^\tau_N) - \calF(\rho^\tau_0) + \sum_{k=0}^{N-1} \tau \frac{W_2^2(\rho^\tau_k, \rho^\tau_{k+1})}{2\tau^2} + \frac{1}{2} \int_0^T \sum_{k=0}^{N-1} |\partial \calF'|^2(\rho^\tau_{r-k}) \, dr \leq 0,
\end{equation}
which is a discrete analog of EDI characterizing the solution of \eqref{eq:nonlinear-diff-pde} (see Proposition~\ref{prop:charact} for the characterization by EDI). It remains to use an appropriate interpolation in time to obtain a sharp inequality which is convenient to pass to the limit to recover \eqref{eq:EDI} (for more detail, we refer to \cite{ambrosio2008gradient}).

A challenging step in the fully discrete settings is to obtain a suitable replacement for \eqref{eq:slope-bound}. The definition of the local slope used in \cite[Lemma~3.1.1]{ambrosio2008gradient} is unsuitable for passing to the discrete-to-continuum limit. Furthermore, we know from the consideration in Section~\ref{sec:potential-movement} that the relation between the discretization parameters $h$ and $\tau$ plays a significant role in the convergence. Therefore, we expect the discrete counterpart for \eqref{eq:slope-bound} to depend on the ratio $h/\tau$ explicitly. 

We realize the following strategy to prove Theorem~\ref{th:main-convergence-diffusion}:
\begin{enumerate}
    \item We begin Section~\ref{sec:interpolations} by introducing the variational interpolation for the fully discrete JKO and obtaining an inequality resembling \eqref{eq:var-ineq}.
    \item We establish the crucial lower bound on the Wasserstein distance by the discrete Fisher information in Lemma~\ref{lemma:lower-bound-by-slope} (for nonlinear diffusion without an external potential) and in Corollary~\ref{cor:slope-bound-with-V} (with the potential). These inequalities provide the replacement for \eqref{eq:slope-bound}.
    \item We conclude Section~\ref{sec:interpolations} by showing the discrete-in-time-and-space version of EDI analogous to \eqref{eq:disc-time-EDI} (Lemma~\ref{lem:var-ineq}). This formulation of EDI provides the base to pass to the limit $h,\tau\to 0$.
    \item Section~\ref{sec:compactness} is devoted to the required compactness results and the convergence result is wrapped up in Section~\ref{sec:convergence}.
\end{enumerate}

\subsection{Interpolations and corresponding inequalities}\label{sec:interpolations}

\begin{definition}[Variational interpolation] \label{def:var-interpolation} \emph{The variational interpolation} $\Tilde{\rho}^{h,\tau}$ for a sequence $\{\rho^{h,\tau}_k\}_{k\in\N}$ is defined for $t = (k + s)\tau$ with any $k\in 1, \dots, N$ and any $s\in (0, 1]$ as
\begin{equation}\label{eq:min-prob-var-inter}
    \Tilde{\rho}^{h,\tau}_{(k + s)\tau} = \arg\min_{\rho^h\in\calP(\calT^h)} \Big\{ \calF^h(\rho^h) + \frac{1}{2\tau s} W_2^2(\rho^h, \rho^h_k) \Big\}
\end{equation}
\end{definition}

The standard way to get the inequality of the type \eqref{eq:var-ineq} using the variational interpolation translates into the discrete setting without changes. We briefly present here the argument for completeness. Consider a function $\calG_k: [0, 1] \to \R$ defined as
    \begin{equation*}
        \calG_k(s) \coloneq \min_{\rho^h\in\calP(\calT^h)} \Big\{ \calF^h (\rho^h) + \frac{1}{2\tau s} W_2^2(\rho^h, \rho^{h,\tau}_k) \Big\}.
    \end{equation*}
The function $\calG_k$ has the following properties:
\begin{enumerate}[label=(\roman*)]
    \item $\calG_k(0^+) = \calF(\rho^{h,\tau}_k)$ and $\displaystyle \calG_k(1) = \calF(\rho^{h,\tau}_{k+1}) + \frac{1}{2\tau} W_2^2(\rho^{h,\tau}_{k+1}, \rho^{h,\tau}_k)$, where $\rho^{h,\tau}_{k+1}$ is the minimizer for \eqref{eq:JKO-intro};
    \item $\calG_k$ is decreasing and, thus, differentiable almost everywhere;
    \item $\displaystyle \calG_k'(s) = -\frac{1}{2\tau s^2} W_2^2(\Tilde{\rho}^{h,\tau}_{(k+s)\tau}, \rho^{h,\tau}_k)$ if $s\in (0, 1]$ is a point of differentiablility.
\end{enumerate}
Since $\calG_k$ is decreasing, we have
$$
    \int_0^1 \calG_k'(s) \dd s \geq \calG_k(1) - \calG_k(0^+),
$$
therefore,
\begin{equation}\label{eq:disc-step-var-ineq}
    \calF^h(\rho^{h,\tau}_{k+1}) - \calF^h(\rho^{h,\tau}_k) + \frac{1}{2\tau} W_2^2(\rho^{h,\tau}_{k+1}, \rho^{h,\tau}_k) + \int_0^1 \frac{1}{2\tau s^2} W_2^2(\Tilde{\rho}^{h,\tau}_{(k+s)\tau}, \rho^{h,\tau}_k) \dd s \leq 0.
\end{equation}

\bigskip

\begin{lemma}[Optimality condition] \label{lemma:optimality-cond} 
    An optimal measure $\Tilde{\rho}^{h,\tau}_{(k + s)\tau}$ in \eqref{eq:min-prob-var-inter} satisfies
    \begin{equation*}
        f' \bigg( \frac{\Tilde{\rho}^{h,\tau}_{(k + s)\tau}(x)}{h^d} \bigg) + V(x) + \frac{\varphi^{h,\tau}(x)}{\tau s} = 0 \qquad \text{for all } x\in\calT^h,
    \end{equation*}
    where $\varphi^{h,\tau}$ is a Kantorovich potential corresponding to the optimal transport plan from $\Tilde{\rho}^{h,\tau}_{(k + s)\tau}$ to $\rho^{h,\tau}_k$.
\end{lemma}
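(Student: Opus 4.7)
The plan is to derive the optimality condition via admissible perturbations in the Lagrangian associated with the constrained minimization in \eqref{eq:min-prob-var-inter}. Since we minimize over probability measures supported on the finite set $\calT^h$, the constraints are $\rho^h_x \geq 0$ and $\sum_{x\in\calT^h} \rho^h_x = 1$, so admissible directions are signed functions $\eta \colon \calT^h \to \R$ with $\sum_x \eta_x = 0$ (and $\eta_x \geq 0$ at grid points where $\Tilde{\rho}^{h,\tau}(x) = 0$). I would compute the first variations of each term separately and then combine them.

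First I would differentiate the energy: since $\calF^h(\rho^h + \epsilon\eta) = \sum_x f\!\bigl(\tfrac{\rho^h_x + \epsilon\eta_x}{h^d}\bigr) h^d + \sum_x V(x)(\rho^h_x+\epsilon\eta_x)$, and $f \in C^2((0,\infty))$, the derivative at $\epsilon=0$ equals $\sum_x \bigl[f'\bigl(\tfrac{\rho^h_x}{h^d}\bigr) + V(x)\bigr]\eta_x$. For the Wasserstein term I would invoke the classical first-variation formula for $\mu \mapsto \tfrac{1}{2}W_2^2(\mu,\nu)$: at an absolutely continuous or atomic measure $\mu$ with a given optimal plan to $\nu$, the derivative in a mass-preserving direction is $\sum_x \varphi(x)\,\eta_x$, where $\varphi$ is any Kantorovich potential for the transport from $\mu$ to $\nu$ (see, e.g., Santambrogio, \emph{Optimal Transport for Applied Mathematicians}, Prop.~7.17). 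In our discrete setting this is particularly transparent because $W_2^2$ is an LP problem in the coupling $\gamma$, whose dual variables are exactly the Kantorovich potentials.

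Next I would exploit the freedom of choosing $\eta = \delta_y - \delta_x$ for any $x\in \spt(\Tilde{\rho}^{h,\tau}_{(k+s)\tau})$ and any $y\in\calT^h$, which is admissible for small $\epsilon>0$. Stationarity (non-negativity of the first variation) yields
\[
f'\!\Bigl(\tfrac{\Tilde{\rho}^{h,\tau}_{(k+s)\tau}(y)}{h^d}\Bigr) + V(y) + \tfrac{\varphi^{h,\tau}(y)}{\tau s} \;\geq\; f'\!\Bigl(\tfrac{\Tilde{\rho}^{h,\tau}_{(k+s)\tau}(x)}{h^d}\Bigr) + V(x) + \tfrac{\varphi^{h,\tau}(x)}{\tau s},
\]
so the left-hand side of the claimed identity is constant (say $-C$) on $\spt(\Tilde{\rho}^{h,\tau}_{(k+s)\tau})$ and bounded below by $-C$ elsewhere. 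Absorbing the Lagrange multiplier $C$ into $\varphi^{h,\tau}$ (Kantorovich potentials are defined up to an additive constant) gives the stated equality on the support.

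The only subtlety, which I would flag in a short remark, is whether the identity holds on all of $\calT^h$ or merely on the support. When $f'(0^+) = -\infty$ (as for the Boltzmann entropy) the inequality off the support forces $\Tilde{\rho}^{h,\tau}_{(k+s)\tau}$ to be everywhere positive, and the identity then holds pointwise on $\calT^h$. For densities like the porous-medium one, where $f'(0^+)$ is finite, the identity should be read with the convention $f'(0):=f'(0^+)$ and with $\geq$ replacing $=$ at points outside the support; this relaxed form is what will actually be used in the subsequent slope bound. The main obstacle I anticipate is not in the variational argument itself but in making this duality with the Kantorovich potential rigorous at the discrete level: one must verify that the LP duality for the discrete transport problem yields a potential $\varphi^{h,\tau}$ that can genuinely play the role of first variation of the $W_2^2$ term, which follows from envelope-theorem-type arguments applied to the linear dual.
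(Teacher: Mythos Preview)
Your argument is correct and reaches the same conclusion, but the route differs from the paper's. The paper proceeds via abstract convex analysis: it views $\calJ^{h,\tau}_{\rho^h_0}$ as a convex functional on $L^\infty(\calT^h)$, invokes the sum rule $\partial\calJ = \partial\calF^h + \partial\calW^{h,\tau}_{\rho^h_0}$ (justified by a continuity point argument), identifies $\partial\calW^{h,\tau}_{\rho^h_0}$ with the set of Kantorovich potentials via the same Proposition~7.17 you cite, and reads off the optimality condition from $0\in\partial\calJ$. Your approach is the more hands-on first-variation computation with explicit perturbation directions $\eta=\delta_y-\delta_x$. Both rely on the same key ingredient (the Kantorovich potential as first variation of $W_2^2$), so the difference is mainly stylistic: the paper's subdifferential route is cleaner in that it avoids tracking the Lagrange multiplier and the support/non-support case split, while your approach is more elementary and, notably, more honest about the subtlety at points of zero mass. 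Your remark that for finite $f'(0^+)$ one only obtains an inequality off the support is well taken; the paper states equality on all of $\calT^h$ without comment, but in the subsequent slope estimate only the difference $\varphi^{h,\tau}(x+\vh)-\varphi^{h,\tau}(x)$ enters, and there the inequality direction you identify is precisely what is used after taking positive parts.
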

\begin{proof}
    We use the notation
    $$
        \calJ^{h,\tau}_{\rho^h_0}(\rho^h) = \calF^h(\rho^h) + \calW^{h,\tau}_{\rho^h_0}(\rho^h), \qquad \calW^{h,\tau}_{\rho^h_0}(\rho^h) = \frac{1}{2\tau}  W_2^2(\rho^h, \rho^h_0)
    $$
    and the proof relies on the following dual formulation of the Wasserstein distance
    \begin{equation}\label{eq:W-dual}
        W_2^2(\rho^h_1, \rho^h_0) = \sup_{\varphi^h\in L^1(\rho^h_1), \,\psi^h \in L^1(\rho^h_0)} \Big\{ \sum_{x\in\calT^h} \varphi^h(x) \rho^h_1(x) + \sum_{y\in\calT^h} \psi^h(y) \rho^h_0(y): \, \varphi^h\oplus\,\psi^h \leq c \Big\},
    \end{equation}
    with the cost function $c(x,y) = \frac{1}{2} |x-y|^2$.
    
    We consider $\calJ^{h,\tau}_{\rho^h_0}$ as a functional on $L^\infty (\calT^h)$. Given $\rho^h_0\in\calP(\calT^h)$, the functional $\calJ^{h,\tau}_{\rho^h_0}$ equals $+\infty$ on $L^\infty(\calT^h) \backslash \calP(\calT^h)$, which one can show using \eqref{eq:W-dual} (or see \cite[Proposition~7.17]{santambrogio2015optimal} for details). 
    
    Let $\rho^h_1$ be a minimizer of $\calJ^{h,\tau}_{\rho^h_0}$ that is $0\in\partial \calJ^{h,\tau}_{\rho^h_0}(\rho^h_1)$. We claim that
    $$
        \partial \calJ^{h,\tau}_{\rho^h_0}(\rho^h_1) = \partial \calF^h(\rho^h_1) + \partial \calW^{h,\tau}_{\rho^h_0}(\rho^h_1).
    $$
    To show this we use, for example, \cite[Theorem~3.11]{carlier2022classical}, where the requirement is that there exists a point $\nu^h\in\text{dom}(\calF^h)\cap\text{dom}(\calW^{h,\tau}_{\rho^h_0})$ such that $\calF^h$ is continuous in $\nu^h$. We choose $\nu^h$ to be the uniform normalized vector, i.e.\ $\nu^h(x) = 1 / |\calT^h|$ for all $x\in\calT^h$. Indeed both $\calF^h$ and $\calW^{h,\tau}_{\rho^h_0}$ are finite in $\nu^h$ and $\calF^h$ is continuous on $L^\infty(\calT^h)$.

    By \cite[Proposition~7.17]{santambrogio2015optimal}, the subdifferential of $W_2^2(\cdot, \rho^h_0)$ coincides with the set of Kantorovich potentials:
    $$
        \partial \calW^{h,\tau}_{\rho^h_0}(\rho^h_1) = \Big\{ \frac{\varphi^h}{\tau}, \text{ for the Kantorovich potentials } \varphi^h \text{ from } \rho^h_1 \text{ to } \rho^h_0 \Big\}.
    $$

    For $\rho^h_1\in\calP(\calT^h)$, it holds that
    $$
        \Big\{ f'\Big( \frac{\rho^h_1}{h^d} \Big) + V \Big\} = \partial \calF^h(\rho^h_1).
    $$
    Thus, if $0 \in \partial \calJ^{h,\tau}_{\rho^h_0}(\rho^h_1)$, then we obtain the asserted optimality condition $\frac{\varphi^h}{\tau} + f'\Big( \frac{\rho^h}{h^d} \Big) + V =0$ for some Kantorovich potentials $\varphi^h$ from $\rho^h_1$ to $\rho^h_0$.
\end{proof}

We first consider the case where the driving energy is given by the internal energy without the external potential
\begin{equation}\label{eq:energy-internal-disc}
    \calF^h(\rho^h) = \sum_{x\in\calT^h} f\Big( \frac{\rho^h_x}{h^d} \Big) h^d.
\end{equation}

We define a discrete counterpart for the Fisher information from Definition~\ref{def:Fisher}.
\begin{definition}\label{def:discret-Fisher}
    Let  $\calF^h$ be given as in \eqref{eq:energy-internal-disc}. We define \emph{the discrete Fisher information} $\calS^h : \calP(\calT^h) \to [0, +\infty)$ corresponding to $\calF^h$ as 
    $$
        \calS^h(\rho^h) = \frac{1}{4} \sum_{x\in\calT^h} \sum_{\vh\in \vdh} \frac{ | \ell(u^h_{x+\vh}) - \ell(u^h_x) |^2}{h^2} h^d, \qquad u^h = \frac{\rho^h}{h^d},
    $$
    where $\ell$ is exactly the same as in Definition~\ref{def:Fisher}.
\end{definition}

\begin{lemma}\label{lemma:lower-bound-by-slope}
    Consider $\calF^h$ defined in \eqref{eq:energy-internal-disc} and assume \eqref{ass:internal-energy}. Let $\rho^h_0 \in \calP(\calT^h)$ be given and $\rho^{h,\tau}$ be a minimizer of one step of \eqref{eq:JKOh}. Then the following lower bound on the Wasserstein distance with the discrete Fisher information holds
    \begin{equation}
        \frac{1}{2\tau^2} W_2^2(\rho^{h,\tau}, \rho^h_0) \geq \bigg( 1 - \frac{h}{2\tau} \bigg) \calS^h(\rho^{h,\tau}) - \frac{dh}{4\tau}.
    \end{equation}
\end{lemma}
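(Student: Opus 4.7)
The estimate is the discrete counterpart of the continuous identity $W_2^2(\rho^\tau,\rho_0)/(2\tau^2) = \calS(\rho^\tau)$, which in the smooth setting follows from the Euler--Lagrange relation $\varphi^\tau = -\tau f'(u^\tau)$ combined with $\nabla\varphi^\tau(x) = x - T(x)$. On the grid the smooth gradient is replaced by finite differences, which will produce both the multiplicative factor $1 - h/(2\tau)$ (via a Young absorption) and the additive remainder $-dh/(4\tau)$ (summed over the $d$ coordinate directions).

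I first apply Lemma~\ref{lemma:optimality-cond} with $V = 0$ to obtain the Euler--Lagrange relation $\varphi^{h,\tau}(x) = -\tau\, f'(u^{h,\tau}(x))$ (modulo an inessential additive constant) for a Kantorovich potential $\varphi^{h,\tau}$ of the optimal coupling $\gamma^*$ between $\rho^{h,\tau}$ and $\rho^h_0$. Then, for every $(x,y) \in \spt(\gamma^*)$ and every $\vh \in \vdh$ with $x+\vh \in \calT^h$, subtracting the equality $\varphi^{h,\tau}(x)+\psi^{h,\tau}(y) = \tfrac{1}{2}|x-y|^2$ from the duality inequality $\varphi^{h,\tau}(x+\vh)+\psi^{h,\tau}(y) \leq \tfrac{1}{2}|x+\vh - y|^2$ and inserting the Euler--Lagrange relation yields the discrete gradient bound
$$
\tau\bigl[f'(u^{h,\tau}(x))-f'(u^{h,\tau}(x+\vh))\bigr] - \tfrac{h^2}{2} \;\leq\; (x-y)\cdot\vh.
$$

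Next, I expand $|x-y|^2 = \tfrac{1}{2h^2}\sum_{\vh \in \vdh}[(x-y)\cdot\vh]^2$ and, using the previous bound together with its counterpart for $-\vh$, I restrict attention to the directed edges on which the left-hand side is nonnegative (those oriented from higher to lower density), so that squaring gives $[(x-y)\cdot\vh]^2 \geq \bigl[\tau\,|D_\vh| - h^2/2\bigr]_+^2$ with $D_\vh := f'(u^{h,\tau}(x+\vh))-f'(u^{h,\tau}(x))$. Young's inequality with parameter $h/(2\tau)$ then yields $\bigl[\tau|D_\vh| - h^2/2\bigr]^2 \geq (1-h/(2\tau))\,\tau^2 D_\vh^2 - O(h^3\tau)$; integrating against $\gamma^*$ (using $P^1_\#\gamma^* = \rho^{h,\tau}$) and noting that each undirected edge of $\calT^h$ is counted exactly once from its higher end, the main term produces a $f'$-Dirichlet form weighted by $u^{h,\tau}_{\max}(x,x+\vh) := \max\{u^{h,\tau}(x),u^{h,\tau}(x+\vh)\}$ on each edge, while the Young remainders, summed against the probability $\rho^{h,\tau}$ over the $d$ coordinate directions, collect to the additive $-dh/(4\tau)$. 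Finally, I apply the pointwise Cauchy--Schwarz inequality
$$
|\ell(b)-\ell(a)|^2 = \Bigl(\int_a^b \sqrt{s}\,f''(s)\,ds\Bigr)^2 \;\leq\; \max(a,b)\,|f'(b)-f'(a)|^2,
$$
which matches the $u^{h,\tau}_{\max}$ weight produced above, to convert the $f'$-Dirichlet form into $2\calS^h(\rho^{h,\tau})$ and obtain the stated inequality.

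The main obstacle is the bookkeeping in the aggregation step: since $(x-y)\cdot\vh$ has no a priori sign, the squaring must be restricted symmetrically in $\pm\vh$ to edges oriented from higher to lower density so that each undirected edge is counted exactly once, and this choice of orientation must be compatible with the edge-weight $u^{h,\tau}_{\max}$ needed for the Cauchy--Schwarz step on $\ell$. It is this nontrivial alignment that makes the constants $(1 - h/(2\tau))$ and $dh/(4\tau)$ appear sharply, rather than the weaker estimate that a naive square-and-sum would give.
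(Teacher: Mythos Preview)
Your proposal is correct and follows essentially the same route as the paper: the optimality condition of Lemma~\ref{lemma:optimality-cond}, the subtraction of the duality equality from the duality inequality, a squaring step producing the factor $1-h/(2\tau)$ and the additive error, marginalization in $y$ against $\gamma^*$ so that each undirected edge is weighted by $\max(u_x,u_{x+\vh})$, and finally the pointwise inequality $|\ell(b)-\ell(a)|^2\le \max(a,b)\,|f'(b)-f'(a)|^2$. The only cosmetic differences are that the paper organizes the sign bookkeeping by splitting on the sign of $(x-y)\cdot\vh$ rather than on the orientation ``higher density to lower density'', and that for the squaring step the paper isolates the elementary inequality $a\ge b-\varepsilon\Rightarrow a^2\ge(1-\varepsilon)b^2-\varepsilon$ (their Lemma~\ref{lemma:inequality}) instead of phrasing it as a Young absorption; these yield the same constants.
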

\begin{proof} Let $\gamma^{h,\tau}$ be an optimal coupling between the measures $\rho^{h,\tau}$ and $\rho^h_0$ and $(\varphi^{h,\tau}, \psi^{h,\tau})$ be the corresponding Kantorovich potentials. Let $(x, y) \in \spt(\gamma^{h,\tau})$, then
    \begin{equation}\label{eq:cond-x-y}
        \varphi^{h,\tau}(x) + \psi^{h,\tau}(y) = \frac{|x - y|^2}{2}.
    \end{equation}
    For any $\vh \in \vdh$, we have
    \begin{equation}\label{eq:cond-x+h-y}
        \varphi^{h,\tau}(x + \vh) + \psi^{h,\tau}(y) \leq \frac{|x + \vh - y|^2}{2}.
    \end{equation}
    Subtracting \eqref{eq:cond-x-y} from \eqref{eq:cond-x+h-y} and factorizing the difference of squares we obtain
    $$
        \varphi^{h,\tau}(x + \vh) - \varphi^{h,\tau}(x) \leq \vh \cdot (x - y + \vh/2).
    $$
    For what comes later, it is convenient to rearrange the terms as
    \begin{equation}\label{eq:lower-bound-a>b-h}
        (x - y) \cdot \frac{\vh}{h} \geq \frac{\varphi^{h,\tau}(x + \vh) - \varphi^{h,\tau}(x) }{h} - \frac{h}{2}.
    \end{equation}
    If $(x-y)\cdot\vh \geq 0$, it follows that
    \begin{equation}\label{eq:lower-bound-abs}
         \Big| (x - y) \cdot \frac{\vh}{h} \Big| \geq \frac{ \big( \varphi^{h,\tau}(x + \vh) - \varphi^{h,\tau}(x)  \big)^+  }{h} - \frac{h}{2}.
    \end{equation}
    Choosing $\varphi^{h,\tau}$ so that we can apply the optimality condition from Lemma~\ref{lemma:optimality-cond}, we get
    $$
        \varphi^{h,\tau}(x + \vh) - \varphi^{h,\tau}(x) = \tau \Big( f'(u^{h,\tau}_x) - f'(u^{h,\tau}_{x + \vh}) \Big),
    $$
    and it follows that
    \begin{equation}\label{eq:lower-bound-abs-a>b-h}
        \Big| (x - y) \cdot \frac{\vh}{h\tau} \Big| \geq \frac{ \Big( f'(u^{h,\tau}_x) - f'(u^{h,\tau}_{x + \vh}) \Big)^+  }{h} - \frac{h}{2\tau}.
    \end{equation}
    To deduce an inequality for square values from \eqref{eq:lower-bound-abs-a>b-h}, we use that for any $a,b \geq 0$ such that $a\geq b - \varepsilon$ for some $\varepsilon >0$, it holds that $a^2 \geq (1 - \varepsilon)b^2 - \varepsilon$ (see Lemma~\ref{lemma:inequality} below for the proof). Therefore, we obtain
    \begin{equation}\label{eq:eq:lower-bound-squared}
        \frac{|(x - y) \cdot \vh|^2}{h^2 \tau^2} \geq \Big(1 - \frac{h}{2\tau} \Big) \Bigg( \frac{\big( f'(u^{h,\tau}_x) - f'(u^{h,\tau}_{x + \vh})\big)^{+}}{h} \Bigg)^2 - \frac{h}{2\tau},  \qquad\mbox{ if } (x-y)\cdot \vh > 0.
    \end{equation}

    In the other case, when $(x-y)\cdot\vh < 0$, it means that $(x-y)\cdot(-\vh) > 0$ and
    \begin{equation*}
        (x-y) \cdot \frac{-\vh}{h} \geq \frac{ \big( \varphi^{h,\tau}(x - \vh) - \varphi^{h,\tau}(x)  \big)^+  }{h} - \frac{h}{2}.
    \end{equation*}
    Following similar steps as for $(x-y)\cdot\vh > 0$, we obtain counterparts of \eqref{eq:lower-bound-abs}, \eqref{eq:lower-bound-abs-a>b-h}, and \eqref{eq:eq:lower-bound-squared} for $(x-y)\cdot\vh < 0$. Eventually, we get 
    \begin{equation}\label{eq:eq:lower-bound-squared-neg}
        \frac{|(x - y) \cdot \vh|^2}{h^2 \tau^2} \geq \Big(1 - \frac{h}{2\tau} \Big) \Bigg( \frac{\big( f'(u^{h,\tau}_{x}) - f'(u^{h,\tau}_{x-\vh})\big)^{+}}{h} \Bigg)^2 - \frac{h}{2\tau},   \qquad\mbox{ if } (x-y)\cdot \vh < 0.
    \end{equation}
    
    Summing \eqref{eq:eq:lower-bound-squared} and \eqref{eq:eq:lower-bound-squared-neg} over $\vh\in \vdh$, the left-hand side becomes
    \begin{equation}\label{eq:vector-from-components}
        \sum_{\vh\in \vdh} \frac{|(x - y) \cdot \vh|^2}{h^2} = 2 \sum_{i=1}^d |(x - y) \cdot e_i|^2 = 2 |x - y|^2
    \end{equation}
    and, consequently,
    \begin{align*}
        \frac{|x - y|^2}{2\tau^2} \geq 
        \frac{1}{4} \Big(1 - \frac{h}{2\tau} \Big) \sum_{\vh\in \vdh} &\Bigg( \frac{\big( f'(u^{h,\tau}_x) - f'(u^{h,\tau}_{x + \vh})\big)^{+}}{h} \Bigg)^2 - \frac{d h}{4\tau}.
    \end{align*}

    Now we aim to use the estimate from the previous line to estimate the Wasserstein distance:
    $$
        \frac{1}{2\tau^2} W_2^2(\rho^{h,\tau}, \rho^h_0) = \frac{1}{2\tau^2} \sum_{(x,y)\in\calT^h\times\calT^h} |x-y|^2 \gamma^{h,\tau}(x, y).
    $$
    Therefore,
    \begin{align*}
        \frac{1}{2\tau^2} W_2^2 (\rho^{h,\tau}, \rho^h_0) \geq 
        \frac{1}{4} \Big(1 - \frac{h}{2\tau} \Big) &\sum_{x\in\calT^h} \rho^{h,\tau}_x \sum_{\vh\in \vdh} \Bigg( \frac{\big( f'(u^{h,\tau}_x) - f'(u^{h,\tau}_{x + \vh})\big)^{+}}{h} \Bigg)^2 - \frac{d h}{4\tau}.
    \end{align*}
    One can see the double sum over $\calT^h$ and $\vdh$ as a sum over edges of the grid, where each edge is counted twice: as $(x, x+\vh)$ and $(x+\vh, x)$. Hence the contribution to the sum of an undirected edge $(x, x + \vh)$ is
    \begin{align*}
        \rho^{h,\tau}_x &\Bigg( \frac{\big( f'(u^{h,\tau}_x) - f'(u^{h,\tau}_{x + \vh})\big)^{+}}{h} \Bigg)^2
        + \rho^{h,\tau}_{x + \vh} \Bigg( \frac{\big( f'(u^{h,\tau}_{x + \vh}) - f'(u^{h,\tau}_x)\big)^{+}}{h} \Bigg)^2
    \end{align*}
    Since we assumed that $f'$ is a monotonically increasing function, it means that the contribution of one edge equals to
    \begin{equation}\label{eq:contribution-one-edge}
        \max\big( \rho^{h,\tau}_x, \rho^{h,\tau}_{x + \vh} \big) \frac{\big( f'(u^{h,\tau}_x) - f'(u^{h,\tau}_{x + \vh})\big)^2}{h^2}
    \end{equation}  
    and the lower bound on the sum can be rewritten as
    \begin{align*}
        \sum_{x\in\calT^h} \rho^{h,\tau}_x  \sum_{\vh\in\vdh}  \Bigg( \frac{\big( f'(u^{h,\tau}_x) - f'(u^{h,\tau}_{x + \vh}) \big)^{+}}{h} \Bigg)^2 
        &= \sum_{(x,y)\in\Sigma^h} \max\big( u^{h,\tau}_x, u^{h,\tau}_y \big)  \frac{\big| f'(u^{h,\tau}_y) - f'(u^{h,\tau}_x)\big|^2}{h^2} h^d.
    \end{align*}
    
    Here we want to switch from dependence on $f'$ to $\ell$, where $\ell$ is defined up to an additive constant by the relation $\ell'(s) = \sqrt{s} f''(s)$. This change will be a discrete analog of the equality
    $$s |\nabla f'(s)|^2 = |\sqrt{s} f''(s) \nabla s|^2 = |\nabla \ell(s)|^2,$$ which is commonly used for different representations of the Fisher information.
    We assert that the following inequality holds true
    \begin{equation}\label{eq:ineq-for-f-and-ell}
        \max(a,b) (f'(b) - f'(a))^2 \geq (\ell(b) - \ell(a))^2 \qquad \text{for all } a,b\geq 0.
    \end{equation}
    We can assume w.l.o.g. the inequaliy $b > a$: then, taking into account convexity of $f$, we get
    \begin{align*}
        \sqrt{\max(a,b) } (f'(b) - f'(a))
        = \sqrt{b} \int_a^b f''(s) \dd s 
        \geq \int_a^b \sqrt{s} f''(s) \dd s
        = \ell(b) - \ell(a).
    \end{align*}
    Since both sides of the inequality are positive, we conclude 
    $$
        \max(a,b) (f'(b) - f'(a))^2 \geq (\ell(b) - \ell(a))^2
    $$
    and we proved \eqref{eq:ineq-for-f-and-ell}.

    Applying \eqref{eq:ineq-for-f-and-ell} yields
    \begin{align*}
        \sum_{(x,y)\in\Sigma^h} \max\big( u^{h,\tau}_x, u^{h,\tau}_y \big)  &\frac{\big| f'(u^{h,\tau}_y) - f'(u^{h,\tau}_x)\big|^2}{h^2} h^d \geq \sum_{(x,y)\in\Sigma^h} \frac{\big| \ell(u^{h,\tau}_y) - \ell(u^{h,\tau}_x)\big|^2}{h^2} h^d.
    \end{align*}

    Summarizing the result, we obtain
    \begin{equation*}
        \frac{1}{2\tau^2} W_2^2 (\rho^{h,\tau}, \rho^h_0) \geq 
        \frac{1}{4} \Big(1 - \frac{h}{2\tau} \Big) \sum_{(x,y)\in\Sigma^h} \frac{\big| \ell(u^{h,\tau}_y) - \ell(u^{h,\tau}_x)\big|^2}{h^2} h^d - \frac{d h}{4\tau}.\qedhere
    \end{equation*}
\end{proof}

\begin{lemma}\label{lemma:inequality}
    Let $a,b\geq 0$ be such that $a \geq b - \varepsilon$ for some $\varepsilon > 0$, then $a^2 \geq (1 - \varepsilon) b^2 - \varepsilon$.
\end{lemma}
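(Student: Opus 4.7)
The plan is to reduce the lemma to a single algebraic inequality valid for all $b,\varepsilon\geq 0$, namely
\[
    (b-\varepsilon)^2 \geq (1-\varepsilon)b^2 - \varepsilon,
\]
and then to handle separately the (trivial) sub-case where $b-\varepsilon<0$. The above inequality is equivalent, after expanding the left-hand side and rearranging, to $\varepsilon b^2 - 2\varepsilon b + \varepsilon^2 + \varepsilon \geq 0$, which factors as $\varepsilon\bigl((b-1)^2 + \varepsilon\bigr) \geq 0$; this is clear from $\varepsilon>0$. One can equivalently derive it from Young's inequality $2\varepsilon b \leq \varepsilon b^2 + \varepsilon$.

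Having this inequality in hand, I would split into two cases. If $b \geq \varepsilon$, then $b-\varepsilon\geq 0$ and, combined with $a\geq b-\varepsilon$ and $a\geq 0$, this yields $a^2 \geq (b-\varepsilon)^2$, so the displayed inequality immediately gives $a^2 \geq (1-\varepsilon)b^2 - \varepsilon$.

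If instead $b<\varepsilon$, then $a^2\geq 0$ and it suffices to check that the right-hand side is non-positive. This is immediate when $\varepsilon\geq 1$ (since then $(1-\varepsilon)b^2\leq 0$), while for $\varepsilon\in(0,1)$ one bounds $(1-\varepsilon)b^2 \leq (1-\varepsilon)\varepsilon^2 \leq \varepsilon^2 \leq \varepsilon$, so again $(1-\varepsilon)b^2 - \varepsilon \leq 0 \leq a^2$.

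The proof is purely algebraic and presents no real obstacle; the only mild care needed is not to square the hypothesis $a \geq b-\varepsilon$ when $b-\varepsilon$ could be negative, which is why the case split on the sign of $b-\varepsilon$ is necessary.
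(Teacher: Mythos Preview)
Your proof is correct and follows essentially the same approach as the paper: both split into the cases $b\geq\varepsilon$ and $b<\varepsilon$, in the first case square the inequality and use the identity $(b-\varepsilon)^2=(1-\varepsilon)b^2-\varepsilon+\varepsilon(b-1)^2+\varepsilon^2$, and in the second case show the right-hand side is nonpositive. The only cosmetic difference is that you isolate the key algebraic inequality $(b-\varepsilon)^2\geq(1-\varepsilon)b^2-\varepsilon$ up front and handle the sub-case $\varepsilon\geq1$ separately, whereas the paper absorbs everything into one chain of manipulations.
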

\begin{proof}
    If $b > \varepsilon$, then rearranging the terms gives
    $$
        a^2 \geq (b-\varepsilon)^2 
        = (1 - \varepsilon) b^2 + \varepsilon b^2 - 2 \varepsilon b + \varepsilon^2
        = (1 - \varepsilon) b^2 - \varepsilon + \varepsilon^2 + \varepsilon (b - 1)^2
    $$
    and the asserted inequality follows by dropping the positive term $\varepsilon^2 + \varepsilon (b - 1)^2$.
    
    In the other case, if $b \in [0, \varepsilon]$, then
    $$
        (1 - \varepsilon) b^2 - \varepsilon \leq 
        (1 - \varepsilon) \varepsilon^2 - \varepsilon
        = - \varepsilon \Big[ \big( \varepsilon - 1/2 \big)^2 + 3/4 \Big] < 0
    $$
    and, trivialy, $a^2 \geq 0 \geq 1 - \varepsilon$.
\end{proof} 

We now consider what changes if the driving energy is given by the internal energy plus an external potential energy as in \eqref{eq:energy-f-V-discrete}. In this case, we define the discrete Fisher information as follows.
\begin{definition}\label{def:discret-Fisher-potential}
    \emph{The discrete Fisher information} $\calS^h_V : \calP(\calT^h) \to [0, +\infty)$ corresponding to $\calF^h$ defined in \eqref{eq:energy-f-V-discrete} is given by
    $$
        \calS^h_V(\rho^h) = \frac{1}{4} \sum_{x\in\calT^h} \sum_{\vh\in \vdh} \frac{ \big| \ell(u^h_{x+\vh}) - \ell(u^h_x) + \sqrt{\max(u^h_x, u^h_{x+\vh})} (V(x+\vh) - V(x)) \big|^2}{h^2} h^d, \qquad u^h = \frac{\rho^h}{h^d},
    $$
    where $\ell$ is exactly the same as in Definition~\ref{def:Fisher}.
\end{definition}
 
\begin{cor}\label{cor:slope-bound-with-V}
    Let the energy $\calF^h$ be given by \eqref{eq:energy-f-V-discrete} with the potential $V\in \mathrm{Lip}(\Omega)$ and let $f$ satisfy \eqref{ass:internal-energy} and \eqref{ass:internal-energy-plus-potential}. Then, in the setting of Lemma~\ref{lemma:lower-bound-by-slope}, we have
    \begin{equation}
        \frac{1}{2\tau^2} W_2^2(\rho^{h,\tau}, \rho^h_0) \geq \bigg( 1 - \frac{h}{2\tau} \bigg) \calS^h_V(\rho^{h,\tau}) - e_h(\rho^{h,\tau}) - \frac{dh}{4\tau}.
    \end{equation}    
    with the Fisher information $S_h$ given in Definition~\ref{def:discret-Fisher-potential} and the additional error term $e_h$ is such that $0 < e_h(\rho^{h,\tau}) \xrightarrow{h\to 0} 0$.
\end{cor}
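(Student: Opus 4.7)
The plan is to adapt the argument of Lemma~\ref{lemma:lower-bound-by-slope} to incorporate the potential term in the energy~\eqref{eq:energy-f-V-discrete}. By Lemma~\ref{lemma:optimality-cond}, the Kantorovich potential now satisfies $\varphi^{h,\tau}(x+\vh)-\varphi^{h,\tau}(x)=\tau\bigl[A^{h,\tau}_x-A^{h,\tau}_{x+\vh}\bigr]$ with $A^{h,\tau}_z:=f'(u^{h,\tau}_z)+V(z)$. Running through the derivation \eqref{eq:cond-x-y}--\eqref{eq:eq:lower-bound-squared-neg} verbatim with $A^{h,\tau}$ in place of $f'(u^{h,\tau})$ and integrating against the optimal plan, we arrive at
\[
    \frac{W_2^2(\rho^{h,\tau},\rho^h_0)}{2\tau^2}\;\geq\;\Bigl(1-\frac{h}{2\tau}\Bigr)\,\frac14\sum_{(x,y)\in\Sigma^h}u_{*A}(x,y)\,\frac{|A^{h,\tau}_x-A^{h,\tau}_y|^2}{h^2}\,h^d\;-\;\frac{dh}{4\tau},
\]
where $u_{*A}(x,y)$ denotes the density at whichever of $x,y$ carries the larger value of $A^{h,\tau}$.

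The core new step is to pass from this edge sum (weighted by $u_{*A}$, with the full discrete gradient $A_x-A_y=\Delta_f+\Delta_V$) to the summand of $\calS^h_V$ (weighted by $M:=\max(u_x,u_y)$, with the combined gradient $\Delta_\ell+\sqrt M\Delta_V$), where we set $\Delta_f:=f'(u_x)-f'(u_y)$, $\Delta_V:=V(x)-V(y)$, $\Delta_\ell:=\ell(u_x)-\ell(u_y)$. The starting point is the algebraic identity
\[
    M(\Delta_f+\Delta_V)^2-(\Delta_\ell+\sqrt M\Delta_V)^2=(\sqrt M\Delta_f-\Delta_\ell)\bigl(\sqrt M\Delta_f+\Delta_\ell+2\sqrt M\Delta_V\bigr),
\]
combined with \eqref{eq:ineq-for-f-and-ell} (which guarantees $|\sqrt M\Delta_f|\geq|\Delta_\ell|$ with matching signs). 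This controls the deficit on each edge in terms of two mismatches: (a) the gap $M-u_{*A}$, which is positive only on \emph{unfavorable edges} where the potential oscillation $|\Delta_V|\leq\|V\|_\Lip h$ exceeds $|\Delta_f|$; and (b) cross-terms linear in $\Delta_V$, handled by Cauchy--Schwarz together with $|\Delta_V|\leq\|V\|_\Lip h$.

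The most delicate part, and the main obstacle, is to verify that $e_h(\rho^{h,\tau})\to 0$. On unfavorable edges the constraint $|\Delta_f|\leq\|V\|_\Lip h$ combined with the convexity of $f$ and assumption~\eqref{ass:internal-energy-plus-potential}(ii) (the quantitative lower bound $f''(s)\geq C_fs^{-\theta}$ for $s\geq s_0$) yields a quantitative bound on $|u_x-u_y|$ and hence on $M-u_{*A}$ whenever $\min(u_x,u_y)\geq s_0$; the contribution of ``low-density'' edges is controlled separately using the total mass carried at densities $\leq s_0$ together with the mass bound $\sum Mh^d\leq 2d$. Summing the per-edge deficit over unfavorable edges and applying Cauchy--Schwarz produces an estimate of the form $e_h(\rho^{h,\tau})=O(h^\alpha)$ for some $\alpha>0$, possibly multiplied by $\sqrt{\calS^h_V(\rho^{h,\tau})}$, which remains controlled along the JKO sequence thanks to the energy-dissipation estimates obtained in the rest of Section~\ref{sec:interpolations}.
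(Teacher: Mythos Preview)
Your overall skeleton matches the paper's: adapt the derivation of Lemma~\ref{lemma:lower-bound-by-slope} to $A^{h,\tau}=f'(u)+V$, arrive at a sum weighted by $u_{*A}$, then compare $u_{*A}$ to $M=\max(u_x,u_y)$ on the ``unfavorable'' edges where $|\Delta_f|\le\Lip(V)h$ and use \eqref{ass:internal-energy-plus-potential} to bound the discrepancy. That is exactly the paper's route. Your algebraic identity relating $M(\Delta_f+\Delta_V)^2$ to $(\Delta_\ell+\sqrt M\,\Delta_V)^2$ is a genuine addition: the paper moves from the $u_{*A}$-weighted sum to ``$\calS^h(\rho^h)-e_h(u)$'' without making explicit the passage from $f'$ to $\ell$, and your identity is the right way to fill this in. Note that the deficit from your identity is negative only on edges where $\sqrt M\,\Delta_f+\Delta_\ell+2\sqrt M\,\Delta_V$ changes sign, which again forces $|\Delta_f|\le 2\Lip(V)h$---so it lives on the \emph{same} class of edges as the $(M-u_{*A})$ error and can be handled in parallel.

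Where your proposal diverges, and becomes weaker than what the corollary asserts, is the endgame. The paper does \emph{not} use Cauchy--Schwarz and does not produce any factor of $\sqrt{\calS^h_V}$; it bounds the unfavorable-edge error directly as $e_h(u)\le C\sum_{\Sigma^h_{f,V}}|u_x-u_y|\,h^d$ (since $|\Delta_f+\Delta_V|\le 2\Lip(V)h$ there) and then shows $\sum_{\Sigma^h_{f,V}}|u_x-u_y|\,h^d=o(1)$ unconditionally. The low-density part is not handled by a ``total mass'' argument: rather, if $\min(u_x,u_y)<s_0$ on an unfavorable edge, the paper uses the lower bound $f''(s)\ge C_f s^{-\theta}$ together with $|\Delta_f|\le\Lip(V)h$ to force $\max(u_x,u_y)<s_0+1$ as well, so $|u_x-u_y|$ is controlled by the Lipschitz constant of $(f^*)'$ on the compact interval $[f'(0),f'(s_0+1)]$, giving a contribution $O(h)$. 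The high-density part uses $(f^*)''(f'(\sigma))=1/f''(\sigma)\le C_f\sigma^\theta$ to get $|u_x-u_y|\le C\max(u_x,u_y)^\theta h$, then the crude bound $\max(u_x,u_y)^\theta\le(u_x+u_y)h^{-d(\theta-1)}$ and the mass constraint, yielding $O(h^{1-d(\theta-1)})$. Both pieces are $o(1)$ with no reference to $\calS^h_V$ or to time-integrated JKO estimates. Your Cauchy--Schwarz route would only give $e_h=O(h)\sqrt{\calS^h_V(\rho^{h,\tau})}$, which does not match the corollary's pointwise claim ``$e_h(\rho^{h,\tau})\to 0$'' and makes the argument unnecessarily circular.
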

\begin{proof} Notationwise, we omit the superscript $h,\tau$ in the proof and we distinguish the measure $\rho$ and its density $u_x = \rho_x / h^d$ for $x\in\calT^h$.    

The proof follows the similar steps as in the proof of Lemma~\ref{lemma:lower-bound-by-slope}. The main modification comes from the point that \eqref{eq:contribution-one-edge} does not have to hold anymore. The reason is that we cannot use the monotonicity of $f'$ directly because the potential $V$ can destroy the monotonicity property. Indeed, if we have a potential $V$, the contribution from one undirected edge $(x,y)\in\Sigma^h$ becomes
    \begin{align*}
        u_x \frac{[(f'(u_x) - f'(u_y) + V(x) - V(y))^+]^2}{h^2} + u_y \frac{[(f'(u_y) - f'(u_x) + V(y) - V(x))^+]^2}{h^2}.
    \end{align*}
    Taking into account also the strict monotonicity of $f'$, meaning that $f'(u_y) - f'(u_x) = 0$ if and only if $u_x = u_y$, we can rewrite the contribution of an undirected edge as
    \begin{align*}
        & \max(u_x, u_y) \frac{ (f'(u_x) - f'(u_y) + V(x) - V(y))^2}{h^2} \Ind_{\big\{ (f'(u_x) - f'(u_y) + V(x) - V(y))  (f'(u_x) - f'(u_y)) \geq 0  \big\}} \\
        &+ \min(u_x, u_y) \frac{(f'(u_x) - f'(u_ y) + V(x) - V(y))^2}{h^2} \Ind_{\big\{ (f'(u_x) - f'(u_y) + V(x) - V(y)) ( f'(u_x) - f'(u_y) ) < 0  \big\}}.
    \end{align*}

    We define the set of edges that violate the monotonicity property as
    \begin{equation}\label{eq:edges-violet-monoton}
        \Sigma_{f,V}^h \coloneq \big\{ (x, y)\in\Sigma^h : \big(f'(u_x) - f'(u_y) + V(x) - V(y) \big) \big(f'(u_x) - f'(u_y) \big) < 0  \big\}.
    \end{equation}
    Summing up over all the edges and rearranging the terms give
    \begin{align*}
        \sum_{(x,y)\in\Sigma^h} &\max (u_x, u_y) \frac{(f'(u_x) - f'(u_{x+\vh}) + V(x) - V(y))^2}{h^2} h^d \\
        &+ \sum_{(x,y)\in\Sigma^h_{f,V}} \Big(\min(u_x, u_y) - \max (u_x, u_y) \Big) \frac{(f'(u_x) - f'(u_y) + V(x) - V(y))^2}{h^2} h^d \\
        &= \calS^h(\rho^h) - e_h(u).
    \end{align*}
    The error term is
    $$
        e_h(u) \coloneq \sum_{(x,y)\in\Sigma^h_{f,V}} |u_x - u_y| \frac{(f'(u_x) - f'(u_y) + V(x) - V(y))^2}{h^2} h^d.
    $$
    We take a closer look at the set $\Sigma^h_{f,V}$. If $
    f'(u_x) - f'(u_y) \geq 0$ and $f'(u_x) - f'(u_y) + V(x) - V(y) < 0$, then
    \begin{equation*}
        0 \leq f'(u_x) - f'(u_y) < V(y) - V(x) \leq \text{Lip}(V) h.
    \end{equation*}
    Consequently, for any $(x,y)\in \Sigma^h_{f,V}$, we can conclude that
    \begin{align} \label{eq:bound-der-f}
        |f'(u_x) - f'(u_y)| &\leq \text{Lip}(V) h \\
        |f'(u_x) - f'(u_y) + V(x) - V(y)| &\leq 2 \text{Lip}(V) h, \label{eq:bound-der-f-plus-V}
    \end{align}
    Therefore, the error term is bounded as
    $$
        e_h(u) \leq 2 \text{Lip}(V) \sum_{(x,y)\in\Sigma^h_{f,V}} |u_x - u_y| h^d.
    $$

    Since $f'$ is strictly monotone, it has a well-defined inverse $g\coloneq (f')^{-1}$ and we can write
    \begin{align*}
        |u_x - u_y| &= \big| g(f'(u_x)) -  g(f'(u_y)) \big| = \bigg| \int_{f'(u_x)}^{f'(u_y)} g'(s) \dd s \bigg|
    \end{align*}

    We note that $f$ has to be defined only on $[0,\infty)$ and we extend it by $+\infty$ on $(-\infty,0)$, then the Legendre dual $f^*$ satisfies the relation
    \begin{equation}\label{eq:conj-df}
        (f^*)'(s) = \begin{cases}
            g(s) & \text{for } s\geq f'(0), \\
            0 & \text{otherwise.}
        \end{cases}
    \end{equation}
    We note that $f'(0)$ can equal $-\infty$, which means that $(f^*)'$ and $g$ coincide on the whole real line.
    
    First we consider the case $\min(u_x, u_y) < s_0$. We claim that in this case $\max(u_x, u_y) < s_0 + 1$. Suppose the opposite, then by monotonicity of $f'$ we have
    $$
        \Lip(V) h \geq |f'(u_x) - f'(u_y)| \geq f'(s_0 + 1) - f'(s_0) = f''(s_0 + \lambda_{s_0}),
    $$
    with some $\lambda_{s_0} \in (0, 1)$. Employing \eqref{ass:internal-energy-plus-potential} again, we obtain  
    $$
        \Lip(V) h
        \geq f''(s_0 + \lambda_{s_0})
          \geq \frac{1}{C_f (s_0 + 1)^\theta },
    $$
    which cannot hold for small enough $h$, thus we have a contradiction and $|u_x - u_y| \leq 1$. Therefore, 
    \begin{align*}
        S_1^h \coloneq \sum_{(x,y)\in\Sigma^h_{f,V}} |u_x - u_y| &h^d \Ind_{\{\min(u_x, u_y) < s_0\}}(x,y)
        \leq \sum_{(x,y)\in\Sigma^h_{f,V}} |u_x - u_y| &h^d \Ind_{\{\max(u_x, u_y) \leq s_0 + 1\}}(x,y).
    \end{align*}
    
    Since $(f^*)'$ is continuously differentiable and monotone on $[f'(0), f'(s_0 + 1)]$, we set $L_0:=\mathrm{Lip}\big((f^*)'_{|[f'(0), f'(s_0 + 1)]}\big)$ and we obtain
    \begin{align*}
        S_1^h &= \sum_{(x,y)\in\Sigma^h_{f,V}} |(f^*)'(f'(u_x)) -  (f^*)'(f'(u_y))| h^d \Ind_{\{\max(u_x, u_y) < s_0 + 1\}} (x, y) \\
        &\leq \sum_{(x,y)\in\Sigma^h_{f,V}} L_0|f'(u_x) - f'(u_y)|) h^d \Ind_{\{\max(u_x, u_y) < s_0 + 1\}} (x, y) \\
        &\leq L_0 \text{Lip}(V) h 4d |\Omega| = o(1)|_{h\to 0}.
    \end{align*}
    
    If $\min(u_x, u_y) \geq s_0$, then $\min\big( f'(u_x), f'(u_y) \big) \geq f'(s_0) > -\infty$ and by the mean value theorem there exists $s_{xy} \in [f'(u_x), f'(u_y)]$ such that
    $$
        |u_x - u_y| = \bigg| \int_{f'(u_x)}^{f'(u_y)} (f^*)''(s) \dd s \bigg|
        = (f^*)''(s_{xy})  \,| f'(u_x) - f'(u_y) |.
    $$
    Since $f$ and $f^*$ are convex conjugates and twice continuously differentiable on the corresponding domains, they satisfy the relation 
    $$
        (f^*)'' (f'(\sigma)) = \frac{1}{f''(\sigma)} \qquad \text{for } \sigma \geq 0.
    $$
    For any $s_{xy}\in [f'(u_x), f'(u_y)]$ there exists $\sigma_{xy}\in [u_x, u_y]$ such that $f'(\sigma_{xy}) = s_{xy}$, thus
    $$
        |u_x - u_y|
        = \frac{1}{|f''(\sigma_{xy})|} \,| f'(u_x) - f'(u_y) |
        \leq C_f \max(u_x, u_y)^\theta \text{Lip}(V) h,
    $$
    where we used \eqref{ass:internal-energy-plus-potential} to estimate $f''$. We notice that $\displaystyle\max_{x\in\calT^h} u_x \leq 1 / h^d$, therefore, 
    $$
     \max(u_x, u_y)^\theta 
        \leq \max(u_x, u_y) \frac{1}{h^{d(\theta-1)}}  \leq\frac{u_x + u_y}{h^{d(\theta-1)}}
    $$
    Combining the estimates for $(x,y)\in\Sigma^h_{f,V}$ such that $\min(u_x, u_y) \geq s_0$, we get
    \begin{align*}
        S_2^h \coloneq \sum_{(x,y)\in\Sigma^h_{f,V}} |u_x - u_y| &h^d \Ind_{\{\min(u_x, u_y) \geq s_0\}}(x,y) \\
        &\leq C_f \text{Lip}(V) h \sum_{(x,y)\in\Sigma^h_{f,V}}  \frac{u_x + u_y}{h^{d(\theta-1)}} h^d \Ind_{\{\min(u_x, u_y) \geq p\}}(x,y) \\
        &\leq 2C_f \text{Lip}(V)   h^{1 - d(\theta-1)}  = o(1)|_{h\to 0},
    \end{align*}
    where we take into account the assumption $1 - d (\theta-1) >0$ coming from \eqref{ass:internal-energy-plus-potential}. This concludes the proof because we got
    $$e_h(u) \leq S_1^h + S_2^h = o(1)|_{h\to 0}.$$

\end{proof}

%
    


\begin{remark}
    The linear Fokker-Planck case is easier to handle than the approach in the proof of Corollary~\ref{cor:slope-bound-with-V}. Indeed, the main point in the proof is the following: for two nearby points $x$ and $y$ in the grid, we assume $f'(u_x)+V(x)\geq f'(u_y)+V(y)$ and we need to bound from below $u_x$ with $\max\{u_x,u_y\}$. If $u_x\geq u_y$ there is nothing to prove, otherwise the computations of Corollary~\ref{cor:slope-bound-with-V} exactly aim at obtaining a similar bound up to an error term. In the case where $f'(s)=\log s$, then we simply obtain $u_x\geq u_y e^{-\mathrm{Lip}(V)h}$, which allows to obtain the result up to a multiplicative factor which tends to $1$ as $h\to 0$. Unfortunately, the computations are less explicit when we lose the logarithmic structure of the Fokker-Planck equation.
\end{remark}

\begin{remark}
    It might be possible to modify Lemma~\ref{lemma:lower-bound-by-slope} for a more general finite-volume tessellation. Let $x$ and $z$ be generating points of the finite-volume cells $K_x$ and $K_z$ and $\vh_{zx} \coloneq z - x$. We also use a common assumption that $z - x$ is orthogonal to the interface $f_{zx} \coloneq \overline{K_x} \cap \overline{K_z}$. If we can define the transition kernel as $\kappa_{zx}$ in such a way that \eqref{eq:vector-from-components} becomes
    \begin{align*}
        \sum_{z\sim x} \kappa_{zx} \frac{|(x - y) \cdot \vh_{zx}|^2}{|\vh_{zx}|^2} 
        = |x - y|^2,
    \end{align*}
    then the conclusion should follow.
\end{remark}

\begin{remark}\label{remark:interaction-2}
    As mentioned in Remark~\ref{remark:interaction-1}, the proof of Corollary~\ref{cor:slope-bound-with-V} can be adapted to deal with the interaction potential $W$ instead of the external potential $V$. For this purpose, one needs to replace $V(x)$ with the discrete convolution $\sum_{z\in\calT^h} W(x-z) \rho^h_z$ in the definition of the Fisher information and, consequently, in the proof. The main point, why the proof works similarly for the interaction potential, is to notice that
    $$
        \bigg| \sum_{z\in\calT^h} \big( W(x + \vh -z) - W(x -z) \big) \rho^h_z \bigg|
        \leq \Lip(W) h \sum_{z\in\calT^h} \rho^h_z
        = \Lip(W) h.
    $$
\end{remark}

\bigskip

We are now in a good position to return to the inequality \eqref{eq:disc-step-var-ineq} and turn it into a form suitable for passing to the limit $h\to 0$. For this purpose, we need another type of interpolation in time, namely piecewise geodesic interpolation.

Let a sequence $\{\rho^{h,\tau}_k\}_{k=1,\dots,N}$ be the sequence of \eqref{eq:JKOh} minimizers as in Definition~\ref{def:JKOh}. Since $\calP_2(\Omega)$ is a geodesic space, all pairs $(\rho^{h,\tau}_k, \rho^{h,\tau}_{k+1})$, $k=0, \dots, N-1$ can be connected by a (possibly not unique) geodesic $[0,1] \ni t \mapsto \check\rho^{h,\tau}_k(t)$. Then there exists a Borel time-dependent vector field $(0,1) \ni t \mapsto \check{v}^{h,\tau}_k(t) \in L^2(\rho^{h,\tau}_k(t); \R^d)$ such that the continuity equation 
\begin{equation*}
    \partial_t \check\rho^{h,\tau}_{k} + \mathrm{div} (\check\rho^{h,\tau}_{k} \check{v}^{h,\tau}_{k}) = 0
\end{equation*}
holds in the sense of distributions. These considerations allow us to introduce piecewise geodesic interpolation.
\begin{definition}[Piecewise geodesic interpolation]\label{def:geodesic-interpolation}
    \emph{The piecewise geodesic interpolation} is a curve $[0,T] \ni t \mapsto \check{\rho}^{h,\tau}_t$ defined as 
    $$
        \check{\rho}^{h,\tau}_t \coloneq \rho^{h,\tau}_k \Big( \frac{t - k\tau}{\tau} \Big) \qquad t \in [k\tau, (k+1)\tau),
    $$
    where $\rho^{h,\tau}_{k}(t)$ is a geodesic connecting $\rho^{h,\tau}_k$ to $\rho^{h,\tau}_{k+1}$.
    Moreover, we denote by $\check{v}^{h,\tau}$ the velocity field defined as
    $$
        \check{v}^{h,\tau}_t \coloneq v^{h,\tau}_k\Big( \frac{t - k\tau}{\tau} \Big) \qquad t\in (k\tau, (k+1)\tau).
    $$
\end{definition}

To bring in the integration in time, we will also make use of the following measure
\begin{equation}\label{eq:time-measure}
    \lambda_{\varepsilon}^{h,\tau} (\dd t) \coloneq \sum_{k=0}^{N-1} \Ind_{[(k + \varepsilon)\tau, (k+1)\tau]}(t) \Big( 1 - \frac{h}{2(t - k\tau)} \Big) \calL(\dd t),
\end{equation}
defined for an arbitrary $\varepsilon \in (0, 1)$.

\begin{lemma}\label{lem:var-ineq}
    Let $h>0$, $\tau>0$, and $\rho^h_0\in\calP(\calT^h)$ be given. Let also $\{\rho^{h,\tau}_k\}_k$ be the sequence of \eqref{eq:JKOh} minimizers from Definition~\ref{def:JKOh}. Then for any $\varepsilon\in (0,1)$, the following variational inequality holds
    \begin{equation*}
        0 \geq \calF^h(\rho^{h,\tau}_N) - \calF^h(\rho^h_0) + \frac{1}{2}\int_0^T \| \check{v}^{h,\tau}_t \|^2_{L^2(\check{\rho}^{h,\tau}_t)} \dd t + \int_0^T \calS^h_V (\Tilde{\rho}^{h,\tau}_t) \lambda_\varepsilon^{h,\tau} (\dd t) + \frac{dT}{4} \frac{h}{\tau} \log \varepsilon + o(1)_{h\to 0}.
    \end{equation*}
\end{lemma}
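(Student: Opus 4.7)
The plan is to start from the one-step variational inequality \eqref{eq:disc-step-var-ineq} (already derived just before Lemma~\ref{lemma:optimality-cond} using the monotonicity of $\calG_k$) and turn it into an integrated discrete EDI by applying the slope lower bound from Corollary~\ref{cor:slope-bound-with-V} (or Lemma~\ref{lemma:lower-bound-by-slope} if $V=0$) to each variational interpolant. The key observation is that $\Tilde\rho^{h,\tau}_{(k+s)\tau}$ is itself a minimizer of \eqref{eq:JKOh} but with time step $s\tau$ in place of $\tau$, so Corollary~\ref{cor:slope-bound-with-V} yields
\begin{equation*}
\frac{W_2^2(\Tilde\rho^{h,\tau}_{(k+s)\tau},\rho^{h,\tau}_k)}{2(s\tau)^2} \geq \Big(1-\frac{h}{2s\tau}\Big)\calS^h_V(\Tilde\rho^{h,\tau}_{(k+s)\tau}) - e_h(\Tilde\rho^{h,\tau}_{(k+s)\tau}) - \frac{dh}{4s\tau}.
\end{equation*}
Multiplying by $\tau$ and integrating in $s$ we would like to plug this into \eqref{eq:disc-step-var-ineq}, but the remainder $\int_0^1 \frac{dh}{4s}\,\dd s$ diverges; this is exactly the reason to truncate to $s\in[\varepsilon,1]$ and use the nonnegativity of the integrand to discard the piece on $[0,\varepsilon]$.

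After truncation, the $k$-th one-step inequality reads
\begin{equation*}
0 \geq \calF^h(\rho^{h,\tau}_{k+1})-\calF^h(\rho^{h,\tau}_k)+\frac{W_2^2(\rho^{h,\tau}_{k+1},\rho^{h,\tau}_k)}{2\tau} + \tau\!\int_\varepsilon^1 \Big(1-\frac{h}{2s\tau}\Big)\calS^h_V(\Tilde\rho^{h,\tau}_{(k+s)\tau})\,\dd s - \tau\!\int_\varepsilon^1 e_h(\Tilde\rho^{h,\tau}_{(k+s)\tau})\,\dd s - \frac{dh}{4}\!\int_\varepsilon^1\!\frac{\dd s}{s}.
\end{equation*}
I would then change variables $t=(k+s)\tau$ in the Fisher-information integral and sum over $k=0,\dots,N-1$. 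The energy differences telescope to $\calF^h(\rho^{h,\tau}_N)-\calF^h(\rho^h_0)$. The squared-distance terms assemble, via the defining property of the piecewise geodesic interpolation (Definition~\ref{def:geodesic-interpolation}), as
\begin{equation*}
\sum_{k=0}^{N-1}\frac{W_2^2(\rho^{h,\tau}_{k+1},\rho^{h,\tau}_k)}{2\tau} = \frac{1}{2}\int_0^T \|\check v^{h,\tau}_t\|^2_{L^2(\check\rho^{h,\tau}_t)}\,\dd t,
\end{equation*}
since on each interval $[k\tau,(k+1)\tau)$ the constant-speed geodesic has metric derivative $W_2(\rho^{h,\tau}_k,\rho^{h,\tau}_{k+1})/\tau$. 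By definition of $\lambda_\varepsilon^{h,\tau}$ in \eqref{eq:time-measure}, the Fisher-information sum becomes $\int_0^T \calS^h_V(\Tilde\rho^{h,\tau}_t)\,\lambda_\varepsilon^{h,\tau}(\dd t)$.

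It remains to track the two leftover terms. The $1/s$-integral produces $\int_\varepsilon^1 \dd s/s = -\log\varepsilon$, and with $N=T/\tau$ summands this yields exactly the boundary contribution $\tfrac{dT}{4}\,\tfrac{h}{\tau}\log\varepsilon$ from the statement (negative for $\varepsilon<1$, as expected). For the error term $e_h$, I would invoke the uniform (in $\rho$) bound derived in the proof of Corollary~\ref{cor:slope-bound-with-V}, which gives $e_h \leq C(\Lip V)(h + h^{1-d(\theta-1)})$; summed we get $\sum_k \tau\int_\varepsilon^1 e_h\,\dd s \leq T(1-\varepsilon)\,C(\Lip V)(h+h^{1-d(\theta-1)}) = o(1)_{h\to 0}$ thanks to \eqref{ass:internal-energy-plus-potential}. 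Putting everything together gives exactly the claimed inequality. The main conceptual obstacle is simply ensuring the slope bound of Corollary~\ref{cor:slope-bound-with-V} is legitimately applied to every variational interpolant (whose associated time step $s\tau$ can be small, which is why the $-h/(2s\tau)$ factor forces the truncation to $s\ge\varepsilon$); everything else is careful bookkeeping.
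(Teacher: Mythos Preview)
Your proposal is correct and follows essentially the same approach as the paper: apply Corollary~\ref{cor:slope-bound-with-V} with time step $s\tau$ to each variational interpolant, truncate the integral in $s$ to $[\varepsilon,1]$ (using nonnegativity of the $W_2^2$ integrand on $[0,\varepsilon]$) to avoid the divergent $\int_0^1\frac{\dd s}{s}$, then sum over $k$, telescope the energies, rewrite the $W_2^2$ sum via the geodesic interpolation, and identify the Fisher-information sum with $\int_0^T\calS^h_V(\Tilde\rho^{h,\tau}_t)\,\lambda_\varepsilon^{h,\tau}(\dd t)$. Your explicit remark that the error $e_h$ from Corollary~\ref{cor:slope-bound-with-V} is bounded uniformly in $\rho$ (hence sums to $T(1-\varepsilon)\,o(1)_{h\to0}$) is exactly what the paper's ``$o(1)|_{h\to0}$'' tacitly uses.
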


\begin{proof}
A slight modification of the proof of Lemma~\ref{lemma:lower-bound-by-slope} with $\tau s$ instead of $\tau$ shows that for the variational interpolant $\Tilde{\rho}^{h,\tau}_{st}$ defined in Definition~\ref{def:var-interpolation} we have
\begin{equation}
    \frac{1}{2\tau^2 s^2} W_2^2(\Tilde{\rho}^{h,\tau}_{s\tau}, \rho^h_0) \geq \bigg( 1 - \frac{h}{2\tau} \bigg) \calS^h_V(\Tilde{\rho}^{h,\tau}_{s\tau}) - \frac{dh}{4\tau s} + o(1)|_{h\to 0}.
\end{equation}
Multiplying the latter inequality by $\tau$ and integrating over $s$, we get
\begin{align*}
    \frac{\tau}{2} \int_\varepsilon^1 \frac{1}{\tau^2 s^2} W_2^2 (\Tilde{\rho}^{h,\tau}_{(k+s)\tau}, \rho^{h,\tau}_k) \dd s
    &\geq \tau \int_\varepsilon^1 \Big( 1 - \frac{h}{2 s\tau} \Big) \calS^h_V(\Tilde{\rho}^{h,\tau}_{(k+s)\tau}) \dd s + \frac{dh}{4} \log \varepsilon + \tau (1 - \varepsilon) o(1)|_{h\to 0},
\end{align*}
for an arbitrary $\varepsilon\in (0, 1)$. Then we obtain from \eqref{eq:disc-step-var-ineq} 
\begin{align*}
    0 
    \geq \calF^h(\rho^{h,\tau}_{k+1}) - \calF^h(\rho^{h,\tau}_k) + \frac{1}{2\tau} W_2^2(\rho^{h,\tau}_{k+1}, \rho^{h,\tau}_k) + \tau \int_\varepsilon^1 \Big( 1 - \frac{h}{2s\tau} \Big) &\calS^h_V (\Tilde{\rho}^{h,\tau}_{(k+s)\tau}) \dd s \\
    &+ \frac{dh}{4} \log \varepsilon + \tau (1 - \varepsilon) o(1)|_{h\to 0}.
\end{align*} 
Rescaling the time as
\begin{align*}
    \tau \int_\varepsilon^1 \Big( 1 - \frac{h}{2s\tau} \Big) \calS^h_V (\Tilde{\rho}^{h,\tau}_{(k+s)\tau}) \dd s 
    = \int_{\varepsilon\tau}^{\tau} \Big( 1 - \frac{h}{2s} \Big) \calS^h_V (\Tilde{\rho}^{h,\tau}_{k\tau + s}) \dd s,
\end{align*}
and summing up from $k=0$ to $N-1$, we obtain that
\begin{align*}
    0 \geq \calF^h(\rho^{h,\tau}_N) - \calF^h(\rho^h_0) + \frac{\tau}{2} \sum_{k=0}^{N-1} \frac{1}{\tau^2} W_2^2(\rho^{h,\tau}_{k+1}, \rho^{h,\tau}_k) + \int_{\varepsilon\tau}^\tau \Big( 1 - \frac{h}{2s} \Big) &\sum_{k=0}^{N-1} \calS^h_V (\Tilde{\rho}^{h,\tau}_{k\tau + s}) \dd s\\
    &+ \frac{dT}{4} \frac{h}{\tau} \log \varepsilon + o(1)|_{h\to 0}.
\end{align*}

To switch to the time integral from $0$ to $T$ in the term with the Fisher information, we use the measure $\lambda_{\varepsilon}^{h,\tau}$ defined in \eqref{eq:time-measure}:
\begin{align*}
    \int_{\varepsilon\tau}^{\tau} \Big( 1 - \frac{h}{2s} \Big) \sum_{k=0}^{N-1} \calS^h_V (\Tilde{\rho}^{h,\tau}_{k\tau + s}) \dd s
    = \int_0^T \calS^h_V (\Tilde{\rho}^{h,\tau}_t) \lambda_\varepsilon^{h,\tau} (\dd t).
\end{align*}

Now we need to rewrite the term $\frac{\tau}{2} \sum_{k=0}^{N-1} \frac{1}{\tau^2} W_2^2(\rho^{h,\tau}_{k+1}, \rho^{h,\tau}_k)$ in an integral form. For this purpose, we will employ the piecewise geodesic interpolation. For $t \in (k\tau, (k+1)\tau)$, we have 
$$
    \|\check{v}^{h,\tau}_t\|_{L^2(\check{\rho}^{h,\tau}_t)} = \frac{1}{\tau} W_2(\rho^{h,\tau}_{k+1}, \rho^{h,\tau}_k),
$$
therefore, with the appropriate time rescaling, we obtain
\begin{align*}
    \frac{\tau}{2} \sum_{k=0}^{N-1} \frac{1}{\tau^2} W_2^2(\rho^{h,\tau}_{k+1}, \rho^{h,\tau}_k) 
    = \frac{1}{2} \sum_{k=0}^{N-1} \int_{k\tau}^{(k+1)\tau} \int_\Omega |\check{v}^{h,\tau}_t|^2 \dd \check{\rho}^{h,\tau}_t \dd t
    = \frac{1}{2} \int_0^T \int_\Omega |\check{v}^{h,\tau}_t|^2 \dd \check{\rho}^{h,\tau}_t \dd t.
\end{align*}
And the asserted variational inequality holds.
\end{proof}

\subsection{Compactness results}\label{sec:compactness}

We begin the lemma on the convergence of the discrete curves to the limit continuous curve. This step is standard for proving the convergence of various versions of the JKO schemes with the strategy explained, for instance, in
\cite[Section~8.3]{santambrogio2015optimal}.
\begin{lemma}\label{lem:compactness-curves}
    The curves $\Tilde{\rho}^{h,\tau}$ and $\check{\rho}^{h,\tau}$ defined in Definitions~\ref{def:var-interpolation} and \ref{def:geodesic-interpolation}, respectively, converge up to a subsequence to the same limit curve $\rho \in C([0,T];\calP(\Omega))$ as $h, \tau \to 0$ uniformly in time for the $W_2$ distance.
\end{lemma}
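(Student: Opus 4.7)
The plan is to derive $W_2$-equicontinuity of the discrete scheme from the JKO dissipation, apply a refined Ascoli--Arzel\`a argument on the compact metric space $(\calP(\Omega), W_2)$, and then show that the three interpolations (piecewise constant, geodesic, variational) share the same limit.

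First, I would use $\rho^{h,\tau}_k$ as a competitor for $\rho^{h,\tau}_{k+1}$ in \eqref{eq:JKOh} and telescope in $k$ to obtain
\[
\sum_{k=0}^{N-1} \frac{W_2^2(\rho^{h,\tau}_{k+1}, \rho^{h,\tau}_k)}{2\tau} \leq \calF^h(\rho^h_0) - \inf \calF^h \leq C,
\]
with $C$ independent of $h,\tau$: the hypothesis $\calF^h(\rho^h_0)\to\calF(\rho_0)$ bounds the first term, while superlinearity of $f$ together with the $L^1$ normalization $\rho^h\in\calP(\calT^h)$ and boundedness of $V$ give a uniform lower bound on $\calF^h$ (the crowd motion case being immediate since $f\equiv 0$ on $[0,1]$). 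The same single-step comparison provides the pointwise bound $W_2^2(\rho^{h,\tau}_k, \rho^{h,\tau}_{k+1}) \leq 2\tau C$ for every $k$. Cauchy--Schwarz then promotes the summed estimate to
\[
W_2(\rho^{h,\tau}_k, \rho^{h,\tau}_m) \leq \sqrt{(m-k)\tau}\,\Big(\sum_{j=k}^{m-1}\frac{W_2^2(\rho^{h,\tau}_{j+1},\rho^{h,\tau}_j)}{\tau}\Big)^{1/2} \leq C'\sqrt{(m-k)\tau},
\]
so that the piecewise constant interpolation $\bar\rho^{h,\tau}$ (defined by $\bar\rho^{h,\tau}_t = \rho^{h,\tau}_k$ on $[k\tau,(k+1)\tau)$) enjoys the uniform modulus of continuity $W_2(\bar\rho^{h,\tau}_s, \bar\rho^{h,\tau}_t) \leq C'\sqrt{|t-s|+\tau}$.

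Since $\Omega$ is bounded, $(\calP(\Omega), W_2)$ is compact (narrow and $W_2$ convergence coincide), and a refined Ascoli--Arzel\`a theorem (see, e.g., \cite[Proposition~3.3.1]{ambrosio2008gradient}) would then extract a subsequence along which $\bar\rho^{h,\tau}$ converges uniformly in $[0,T]$ for $W_2$ to some $\rho \in C([0,T]; \calP(\Omega))$. To transfer this to the other two interpolations I would observe that, for $t\in[k\tau,(k+1)\tau]$, the geodesic interpolation satisfies $W_2(\check\rho^{h,\tau}_t,\bar\rho^{h,\tau}_t)\leq W_2(\rho^{h,\tau}_k,\rho^{h,\tau}_{k+1})\leq\sqrt{2\tau C}$, while using $\rho^{h,\tau}_k$ as competitor in \eqref{eq:min-prob-var-inter} gives
\[
W_2^2(\Tilde\rho^{h,\tau}_{(k+s)\tau},\rho^{h,\tau}_k)\leq 2s\tau\bigl(\calF^h(\rho^{h,\tau}_k)-\calF^h(\Tilde\rho^{h,\tau}_{(k+s)\tau})\bigr)\leq 2\tau C.
\]
Both $O(\sqrt{\tau})$ discrepancies vanish as $\tau\to 0$, so $\check\rho^{h,\tau}$ and $\Tilde\rho^{h,\tau}$ converge to the same limit $\rho$ along the selected subsequence.

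The only point deserving attention is the uniform-in-$h$ lower bound on $\calF^h$: superlinearity \eqref{ass:internal-energy-superlinear} implies $f(s)\geq -K(1+s)$ for some $K$, hence $\sum_z f(u^{h,\tau}_z)h^d\geq -K(|\Omega|+1)$ by mass conservation, and the potential term is bounded by $\|V\|_{L^\infty(\Omega)}$. Beyond that, the argument is a routine adaptation of the classical convergence proof for the standard JKO scheme, with no structural obstacle coming from the discretization on $\calT^h$.
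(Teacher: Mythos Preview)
Your proposal is correct and follows essentially the same approach as the paper. The only cosmetic difference is that the paper takes the geodesic interpolation $\check\rho^{h,\tau}$ as the primary curve (using the velocity field to obtain a genuine $\sqrt{t-s}$ modulus and then standard Ascoli--Arzel\`a), whereas you work with the piecewise constant interpolation and a $\sqrt{|t-s|+\tau}$ modulus requiring the refined version; both routes are standard and yield the same $O(\sqrt\tau)$ comparison between the three interpolations.
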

\begin{proof}
     Using $\rho^{h,\tau}_k$ as a competitor for one step of \eqref{eq:JKOh}, we have 
    $$
        \calF^h(\rho^{h,\tau}_{k+1}) + \frac{1}{\tau} W_2^2(\rho^{h,\tau}_k, \rho^{h,\tau}_{k+1}) \leq \calF^h(\rho^{h,\tau}_k).
    $$
    Summing up the inequalities for $k = 0, \dots, N - 1$ gives
    $$
        \sum_{k=0}^{N-1} \frac{W_2^2(\rho^{h,\tau}_k, \rho^{h,\tau}_{k+1})}{\tau} \leq \sum_{k=0}^{N-1} \big( \calF^h(\rho^{h,\tau}_k) - \calF^h(\rho^{h,\tau}_{k+1}) \big) 
        = \calF^h(\rho^h_0) - \calF(\hat{\rho}^h_N)
        \leq \sup_{h>0} \calF^h(\rho^h_0) - \inf_{\rho\in\calP(\Omega)} \calF(\rho) \eqcolon C_\calF.
    $$
    We can use the latter inequality to bound the norm of the velocity $\check{v}^{h,\tau}$:
    \begin{align*}
        \int_0^T \|\check{v}^{h,\tau}_t \|^2_{L^2(\check{\rho}^{h,\tau}_t)} \dd t
        = \sum_{k=0}^{N-1} \frac{1}{\tau} W_2^2(\rho^{h,\tau}_{k+1}, \rho^{h,\tau}_k) \leq C_\calF.
    \end{align*}
    Denoting by $\check{\jmath}^{h,\tau}_t$ the corresponding flux $\check{\jmath}^{h,\tau}_t = \check{\rho}^{h,\tau}_t \check{v}^{h,\tau}_t$, we also obtain
    \begin{align*}
        \int_0^T |\check{\jmath}^{h,\tau}_t| (\Omega) \dd t 
        &= \int_0^T \|\check{v}^{h,\tau}_t \|_{L^1(\check{\rho}^{h,\tau}_t)} \dd t
        \leq \sqrt{T} \bigg( \int_0^T \|\check{v}^{h,\tau}_t \|^2_{L^2(\check{\rho}^{h,\tau}_t)} \dd t \bigg)^{1/2} 
        \leq \sqrt{C_\calF T}.
    \end{align*}
    Thus, the family $\{\check{\jmath}^{h,\tau}\}_{h,\tau>0}$ is weakly-* compact in $\calM([0,T]\times\Omega;\R^d)$. Since $(\check{\rho}^{h,\tau}, \check{\jmath}^{h,\tau})$ satisfies the continuity equation, $t \mapsto \check{\rho}^{h,\tau}_t$ is an absolutely continuous curve satisfying for $s < t$
    \begin{align*}
        W_2(\check{\rho}^{h,\tau}_t, \check{\rho}^{h,\tau}_s)
        &\leq \int_s^t |(\check{\rho}^{h,\tau}_r)'| \dd r
        \leq \int_s^t \|\check{v}^{h,\tau}_r \|_{L^1(\check{\rho}^{h,\tau}_r)} \dd r \\
        &\leq \sqrt{t-s} \bigg( \int_s^t \|\check{v}^{h,\tau}_r \|^2_{L^2(\check{\rho}^{h,\tau}_r)} \dd r \bigg)^{1/2}
        \leq \sqrt{C_\calF (t-s)}.
    \end{align*}
    Notice that the estimates above hold for an arbitrary $h>0$, so we can choose $h,\tau \to 0$. Therefore, we can apply the Ascoli--Arzel\'a theorem, and there exists a (not relabeled) subsequence $\{\check{\rho}^{h(\tau),\tau}\}$ and a limit curve $\rho \in C([0, T]; \calP(\Omega ))$, such that
    $$
        \check{\rho}^{h,\tau}_t \to \rho_t \qquad \text{uniformly for the } W_2 \text{ distance.}  
    $$
    Furthermore, the curves $\Tilde{\rho}^{h,\tau}$ and $\check{\rho}^{h,\tau}$ convergence to the same curve $\rho_t$. We use that the curves $\Tilde{\rho}^{h,\tau}$ and $\check{\rho}^{h,\tau}$ coincide at $t = k\tau$ for all $k = 0, \dots, N-1$. For $t\in ((k-1) \tau, k \tau]$, we have
    \begin{align*}
        W_2(\check{\rho}^{h,\tau}_t, \Tilde{\rho}^{h,\tau}_t)
        &\leq W_2(\check{\rho}^{h,\tau}_t, \check{\rho}^{h,\tau}_{k\tau}) + W_2(\check{\rho}^{h,\tau}_{k\tau}, \Tilde{\rho}^{h,\tau}_t)
        \leq \sqrt{C_\calF (t - k\tau)} + W_2(\rho^{h,\tau}_k, \rho^{h,\tau}_{k-1}) + W_2(\rho^{h,\tau}_{k-1}, \Tilde{\rho}^{h,\tau}_t) \\
        &\leq 3 \sqrt{C_\calF \tau}.
    \end{align*}
    Therefore, 
    $$
        \quad\Tilde{\rho}^{h,\tau}_t \to \rho_t \qquad \text{uniformly for the } W_2 \text{ distance.}\qedhere  
    $$
\end{proof}
\begin{remark}
    There is no need to assume $h/\tau\to 0$ to obtain compactness. In fact, the compactness result holds for any relation between parameters $h$ and $\tau$. However, having $h/\tau \to 0$ will be crucial for proving that the limit curve $\rho_t$ is a solution of the corresponding PDE. If $h/\tau \to 0$ is not satisfied, it is possible to construct examples where $\rho^{h,\tau}_k = \rho^h_0$ for all $k=1,\dots,N-1$ (the evolution is frozen) or where the error terms result in additional drift added to the original equation.
\end{remark}

Now we aim to prove the strong compactness in space of the density. To simplify the notation, we define
\begin{equation}\label{eq:reconstruct-ell}
    \hat{\ell}^h \coloneq \sum_{x\in\calT^h} \ell(u^h_x) \Ind_{Q_h(x)}.
\end{equation}
To prove the crucial compactness result in Lemma~\ref{lem:strong-compactness}, we need a version of the Poincar\'e-Wirtinger inequality presented in the following lemma. Several versions of discrete Poincar\'e-type inequalities are known for finite volume schemes \cite{eymard2000finite}. This proof is a simple modification of the standard argument used for such inequalities (as in \cite[Lemma~3.7]{eymard2000finite}).
\begin{lemma}[Discrete Poincar\'e-Wirtinger inequality]\label{lemma:poincare}
    Given $f^h \in \calB(\calT^h)$, let $\hat{f}^h$ be the piecewise constant reconstruction defined in \eqref{eq:pwconst-function} and set 
    $$
        \overline{f}^h \coloneq \intbar_\Omega \hat{f}^h(x) \dd x.
    $$

    Then the following inequality holds
    $$
         \|\hat{f}^h - \overline{f}^h \|_2^2 \leq C_{\Omega} \sum_{(p,q)\in\Sigma^h} |f^h_p - f^h_q|^2 h^{d-2},
    $$
    where $C_\Omega >0$ is independent of $h$.
\end{lemma}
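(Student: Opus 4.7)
My plan is the standard finite-volume route to Poincar\'e-Wirtinger inequalities (in the spirit of \cite[Lemma~3.7]{eymard2000finite}): reduce to a double integral via Jensen's inequality, expand the cell-by-cell difference along an axis-aligned grid path, and close with a combinatorial edge-counting argument.

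First, Jensen's inequality applied to the average representation of $\overline{f}^h$ gives
$$\|\hat{f}^h - \overline{f}^h\|_2^2 = \int_\Omega \bigg| \intbar_\Omega \big(\hat{f}^h(x) - \hat{f}^h(y)\big) \dd y \bigg|^2 \dd x \leq \frac{1}{|\Omega|} \iint_{\Omega\times\Omega} |\hat{f}^h(x) - \hat{f}^h(y)|^2 \dd x \dd y = \frac{h^{2d}}{|\Omega|} \sum_{p,q\in\calT^h} |f^h_p - f^h_q|^2,$$
where I used that $\hat{f}^h$ is cellwise constant. Next, for each ordered pair $(p,q)\in\calT^h\times\calT^h$ I would connect $p$ to $q$ by the canonical axis-aligned grid path obtained by changing coordinates successively in the order $1, 2, \ldots, d$, passing through the intermediate points $(q_1, \ldots, q_k, p_{k+1}, \ldots, p_d)$. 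Writing $N_{pq} \leq \mathrm{diam}(\Omega)/h$ for the number of edges of this path, a telescoping sum together with the Cauchy-Schwarz inequality yields
$$|f^h_p - f^h_q|^2 \leq N_{pq} \sum_{e\in\Sigma^h} |\Delta_e f^h|^2\, \Ind_{\{e\in\mathrm{path}(p,q)\}}, \qquad \Delta_e f^h \coloneq f^h_z - f^h_\zeta \text{ for } e = (z,\zeta).$$

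The key combinatorial step is then to swap the order of summation and estimate, for each directed edge $e = (r, r + h e_i)$, the number of pairs $(p,q)$ whose canonical path traverses it. An inspection of the construction shows that this forces $q_j = r_j$ for $j<i$ and $p_j = r_j$ for $j>i$, while $r_i$ must lie between $p_i$ and $q_i$; the remaining free coordinates are $p_1, \ldots, p_i$ and $q_i, \ldots, q_d$, a total of $d+1$ integer parameters each ranging in an interval of length $O(\mathrm{diam}(\Omega)/h)$. Hence the count is at most $C(\mathrm{diam}(\Omega)/h)^{d+1}$. Plugging this bound together with $N_{pq} \leq \mathrm{diam}(\Omega)/h$ into the rearranged sum produces
$$\sum_{p,q\in\calT^h} h^{2d} |f^h_p - f^h_q|^2 \leq h^{2d} \cdot \frac{\mathrm{diam}(\Omega)}{h} \cdot C \Big(\frac{\mathrm{diam}(\Omega)}{h}\Big)^{d+1} \sum_{e\in\Sigma^h} |\Delta_e f^h|^2 = C\,\mathrm{diam}(\Omega)^{d+2} \sum_{e\in\Sigma^h} |\Delta_e f^h|^2 h^{d-2},$$
which after dividing by $|\Omega|$ is the desired inequality with $C_\Omega$ depending only on $|\Omega|$ and $\mathrm{diam}(\Omega)$.

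The main (and only slightly delicate) obstacle is the boundary: the canonical axis-aligned path from $p$ to $q$ must stay inside $\Omega$ for this argument to apply as stated, which is automatic when $\Omega$ is convex. For a general bounded $\Omega$ with smooth boundary I would proceed by a finite covering of $\Omega$ by axis-aligned convex patches, apply the local inequality on each patch to the restriction of $f^h$, and glue the local Poincar\'e inequalities with a standard partition-of-unity and mean-adjustment argument, absorbing the resulting geometric constants into the final $C_\Omega$.
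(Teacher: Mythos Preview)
Your overall strategy is correct and standard, but it differs from the paper's in the choice of path and in how the edge-counting is carried out. The paper does not use the canonical axis-aligned path; instead, for continuous points $x,z\in\Omega$ it follows the straight segment $[x,z]$ and collects the grid edges whose interfaces the segment crosses. The path length is then bounded geometrically by $C|x-z|/h$, and your combinatorial count is replaced by the integral estimate
\[
\iint_{\Omega\times\Omega} |x-z|\,\Ind_{\{[x,z]\cap I_h(p,q)\neq\emptyset\}}\,\dd x\,\dd z \;\leq\; C_R\,h^{d-1},
\]
obtained via the change of variables $\eta=x-z$. Jensen's inequality is applied at the end rather than the beginning. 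Your purely discrete counting is arguably more elementary, while the paper's segment path is more robust with respect to the shape of $\Omega$.

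This last point matters because your claim that the axis-aligned path ``is automatic when $\Omega$ is convex'' is \emph{false}. Take for instance a thin diagonal strip $\Omega=\{(x_1,x_2):|x_1-x_2|<\delta,\ 0<x_1<1\}$: for $p$ and $q$ near opposite ends, the canonical path first moves to $(q_1,p_2)$, which lies well outside $\Omega$. So your direct argument covers only axis-aligned boxes, and the convex case already requires the covering-and-gluing step you reserve for smooth boundaries. That step can be made to work but is less routine than you suggest (one needs a connected chain of overlapping boxes and a careful comparison of local means). The paper's segment-following path avoids this issue for convex $\Omega$, since the segment stays inside by convexity; this is the practical payoff of its choice.
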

\begin{proof} Fix an arbitrary $x\in\Omega$. There exists $p\in\calT^h$ such that $x\in Q_h(p)$. For any $z\in\Omega$ such that $z \ni Q_h(p)$, there exists $q \in \calT^h$ such that $x\in Q_h(q)$ and we can find a path between $p$ and $q$ consisting of the directed edges $(p_i, p_{i+1})\in\Sigma^h$ such that $[x, z] \cap I_h(p_i, p_{i+1}) \neq \emptyset$, where $[x, z]$ is the line segment connecting $x$ and $z$. We denote by $\text{path}(x,z)$ this sequence of edges
    $$
        \text{path}(x,z) = \big( (p_0 = p, p_1), (p_1, p_2), \dots, (p_{K-1}, p_K = q) \big), \qquad p_i\in\calT^h, \quad (p_i,p_{i+1}) \in \Sigma^h. 
    $$
    To obtain a bound on the number of edges in the path, we use the following idea. A point $p'\in\calT^h$ cannot be in the path if the distance from $p'$ to the line segment $[x,z]$ is more than $2h$. Thus, we can get bound $|\text{path}(x,z)| = K$ by comparing the volume of the set $H([x,z],2h) \coloneq \{y\in\Omega: \text{dist}(y, [x,z]) < 2h \}$ with the volume of one cell $h^d$. 
    \begin{equation}\label{eq:bound-length-path}
        K \leq \frac{|H([x,z],2h)|}{h^d} \leq C_d \frac{(|x-z| + 2h) h^{d-1}}{h^d}
        \leq C_\text{path} \frac{|x-z|}{h},
    \end{equation}
    where the constant $C_d$ is the volume of $(d-1)$-dimensional ball with the radius 1.
    
    Using first the triangular inequality and then the H\"older inequality, we get
    \begin{align*}
        |\hat f^h(x) - \hat f^h(z)| \leq \sum_{(p,q)\in\text{path}(x,z)} |f^h_p - f^h_q|
        \leq \bigg( \sum_{(p,q)\in\text{path} (x,z)} |f^h_p - f^h_q|^2 \frac{1}{h} \bigg)^{1/2} \bigg( \sum_{(p,q)\in\text{path}(x,z)} h \bigg)^{1/2}.
    \end{align*}
    By \eqref{eq:bound-length-path}, we have
    $$
        |\hat f^h(x) - \hat f^h(z)|^2 \leq C |x-z| \sum_{(p,q)\in\Sigma^h} |f^h_p - f^h_q|^2 \frac{1}{h} \Ind {\{[x, z] \cap I_h(p,q) \neq \emptyset\}}.
    $$
    Fix an arbitrary $(p,q)\in\Sigma^h$. We denote by $R$ the diameter of $\Omega$ and by $B_R$ the ball with the radius $R$ centered at the origin. Integrating in $x$ and $z$ over $\Omega$ and using the change of variables $\eta = x - z$, we obtain:
    \begin{align*}
        \int_\Omega \int_\Omega |x-z| \Ind {\{[x, z] \cap I_h(p,q) \neq \emptyset\}} \dd x \dd z
        &\leq \int_{B_R} \int_\Omega |\eta| \Ind {\{[x, x + \eta] \cap I_h(p,q) \neq \emptyset\}} \dd x \dd \eta \\
        &\leq \int_{B_R} |\eta|^2 h^{d-1} \dd \eta
        \leq C_R h^{d-1},
    \end{align*}
    where $C_R < \infty$ is a constant depending only on $R$. Therefore,
    \begin{align*}
        \int_\Omega \int_\Omega |\hat f^h(x) - \hat f^h(z)|^2 \dd x \dd z \leq C_{d,R} \sum_{(p,q)\in\Sigma^h} |f^h_p - f^h_q|^2 h^{d-2}.
    \end{align*}

    To conclude the proof, it is just left to use the Jensen inequality:
    \begin{align*}
        \| \hat{f}^h - \overline{f}^h \|_2^2
        &= \int_\Omega \bigg|\hat{f}^h(x) - \intbar_\Omega \hat{f}^h(z) \dd z \bigg|^2 \dd x \leq \int_\Omega \intbar_\Omega | \hat{f}^h(x) - \hat{f}^h(z) |^2 \dd z \dd x \\
        &\leq C_\Omega \sum_{(p,q)\in\Sigma^h} |f^h_p - f^h_q|^2 h^{d-2}.
    \end{align*}
\end{proof} 

\begin{lemma}\label{lem:strong-compactness}
    Let a family $\{\rho^h\}_{h>0}$ with $\rho^h\in\calP(\calT^h)$ be such that $\hat{\rho}^h \rightharpoonup \rho$ narrowly in $\calP(\Omega)$ and
    $$
        \sup_{h>0} \calS^h(\rho^h) \eqcolon C_\calS < \infty.
    $$
    Then for any $\eta\in\R^d$ we have
    \begin{equation}\label{eq:shift-estimate}
        \sup_{h>0} \|\hat{\ell}^h\|_{L^2(\Omega)} \eqcolon C_2 < \infty \quad \text{and} \quad \int_{\Omega_{|\eta|}} |\hat{\ell}^h(z - \eta) - \hat{\ell}^h (z) |^2 \dd z \leq C |\eta| \max(|\eta|, h),
    \end{equation}
    where $\Omega_{\epsilon} = \{x \in \Omega : \mathrm{dist}(x, \partial \Omega) \geq \epsilon\}$.
    
    Moreover, $\{\hat{\ell}^h\}_{h>0}$ is relatively compact in $L^2(\Omega)$, any subsequential limit $g$ of $\{\hat{\ell}^h\}_{h>0}$ belongs to $H^1(\Omega)$, and there exists a subsequence for which the limit has the form $\displaystyle g = \ell \Big( \frac{\dd\rho}{\dd\calL^d} \Big)$.
\end{lemma}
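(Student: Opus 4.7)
The plan is to obtain in order: a shift estimate in $L^2$, a uniform $L^2$-bound on $\hat\ell^h$, relative compactness via Riesz-Fréchet-Kolmogorov, and finally the identification of the limit as $\ell(\dd\rho/\dd\calL^d)$. The first three ingredients are variants of the Poincaré-Wirtinger argument of Lemma~\ref{lemma:poincare}, combined with the mass constraint $\int_\Omega \hat u^h \dd \calL^d = 1$. The delicate point will be the identification, which requires equi-integrability of $\{\hat u^h\}$ in $L^1$.

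For the shift estimate, fix $\eta \in \R^d$ and, for each $z \in \Omega_{|\eta|}$, choose cells $p,q \in \calT^h$ with $z \in Q_h(p)$, $z - \eta \in Q_h(q)$, and a path $p = p_0,\dots,p_K = q$ of grid neighbors following the segment $[z, z-\eta]$ with $K \leq C_{\text{path}}(1 + |\eta|/h)$. Telescoping and Cauchy-Schwarz give $|\hat\ell^h(z) - \hat\ell^h(z-\eta)|^2 \leq K \sum_{j=0}^{K-1} |\ell(u^h_{p_{j+1}}) - \ell(u^h_{p_j})|^2$, and the same edge-counting as in the proof of Lemma~\ref{lemma:poincare} shows that each edge of $\Sigma^h$ is visited by a set of $z$'s of measure $\lesssim |\eta|\,h^{d-1}$. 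Since $\sum_{(p,q) \in \Sigma^h} |\ell(u^h_p) - \ell(u^h_q)|^2 h^{d-2} \leq 2\calS^h(\rho^h) \leq 2C_\calS$, one obtains
\[
    \int_{\Omega_{|\eta|}} |\hat\ell^h(z-\eta) - \hat\ell^h(z)|^2 \dd z \lesssim (1 + |\eta|/h)\cdot |\eta| h^{d-1} \cdot h^{-(d-2)}\,\calS^h(\rho^h) \lesssim |\eta|\max(|\eta|, h)\,C_\calS,
\]
the asserted bound.

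Applying Lemma~\ref{lemma:poincare} with $f^h = \ell(u^h)$ yields $\|\hat\ell^h - \overline\ell^h\|_{L^2}^2 \leq 2C_\Omega C_\calS$. To control the mean, the mass constraint and Markov's inequality show that $E_h := \{\hat u^h \leq 2/|\Omega|\}$ has measure at least $|\Omega|/2$; since $\ell' = \sqrt{s}\,f'' \geq 0$, the function $\ell$ is non-decreasing, so $\hat\ell^h \leq \ell(2/|\Omega|) =: M_+$ on $E_h$. If $\overline\ell^h > M_+$ were too large, $\int_{E_h}|\hat\ell^h - \overline\ell^h|^2 \geq (|\Omega|/2)(\overline\ell^h - M_+)^2$ would exceed the Poincaré bound, so $\overline\ell^h \leq M_+ + (4C_\Omega C_\calS/|\Omega|)^{1/2}$; a symmetric argument (normalizing $\ell(0) = 0$ when $\ell(0^+)$ is finite) bounds $\overline\ell^h$ from below. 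Hence $\|\hat\ell^h\|_{L^2} \leq C_2$. The shift estimate tends to $0$ uniformly in $h$ as $|\eta| \to 0$ (bounded by $|\eta| h_0$ when $|\eta| \leq h \leq h_0$ and by $C|\eta|^2$ otherwise), so Riesz-Fréchet-Kolmogorov gives relative compactness of $\{\hat\ell^h\}$ in $L^2(\Omega)$; passing to the limit in the shift estimate, any subsequential limit $g$ satisfies $\|g(\cdot - \eta) - g\|_{L^2}^2 \leq C|\eta|^2$, so $g \in H^1(\Omega)$.

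For the identification, extract a further subsequence along which $\hat\ell^{h_k} \to g$ in $L^2$ and a.e. Assuming $\ell$ strictly increasing (as under \eqref{ass:internal-energy-plus-potential}, and in the other cases of interest), its inverse $\psi = \ell^{-1}$ is continuous, so $\hat u^{h_k} = \psi(\hat\ell^{h_k}) \to \psi(g)$ a.e. By Fatou, $\psi(g) \in L^1(\Omega)$, and for every nonnegative $\phi \in C_b(\Omega)$, $\int \phi\,\psi(g)\,\dd\calL^d \leq \liminf \int \phi\,\hat u^{h_k}\,\dd\calL^d = \int \phi\,\dd\rho$, whence $\psi(g)\calL^d \leq \rho$ as measures. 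Promoting this inequality to equality requires equi-integrability of $\{\hat u^h\}$ in $L^1$, which is the main obstacle: it is not available from $\calS^h(\rho^h) \leq C_\calS$ alone, since $\ell(s)^2$ need not be superlinear (e.g.\ in the Fokker-Planck case $\ell(s) = 2\sqrt{s}$). However, in every application of this lemma in the paper, the discrete EDI yields a uniform bound $\sup_h \calF^h(\rho^h) < \infty$; together with \eqref{ass:internal-energy-superlinear}, the De la Vallée Poussin criterion then gives equi-integrability of $\{\hat u^h\}$. The a.e.\ convergence $\hat u^{h_k} \to \psi(g)$ upgrades to $L^1$, and combined with the narrow convergence $\hat\rho^h \rightharpoonup \rho$, this identifies $\rho = \psi(g)\calL^d$, i.e.\ $g = \ell(\dd\rho/\dd\calL^d)$ along the chosen subsequence—which is exactly the existential statement at the end of the lemma.
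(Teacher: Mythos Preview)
Your proof follows the same overall route as the paper's: the shift estimate via a path-counting argument (reproducing the mechanism of Lemma~\ref{lemma:poincare}), the Poincar\'e--Wirtinger bound on $\|\hat\ell^h - \overline\ell^h\|_{L^2}$, Riesz--Fr\'echet--Kolmogorov for compactness, $H^1$ regularity of the limit from the shift estimate, and identification via a.e.\ convergence of $\hat u^h$. The one genuine methodological difference is in how you control the mean $\overline\ell^h$. Your Markov-inequality argument for the \emph{upper} bound is more direct and quantitative than the paper's: the paper instead argues by contradiction, supposing $\overline\ell^h\to+\infty$ along a subsequence, deducing $\hat u^h\to+\infty$ a.e., and contradicting the mass constraint. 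For the \emph{lower} bound, however, your ``symmetric argument'' only works when $\ell(0^+)$ is finite (so that $\hat\ell^h$ is bounded below pointwise); the paper's contradiction argument covers the general case by showing $\overline\ell^h\to-\infty$ would force $\hat u^h\to 0$ a.e., and then invoking equi-integrability of $\{\hat u^h\}$ (from \eqref{ass:internal-energy-superlinear}) to contradict $\int\hat u^h=1$. You are right to flag that equi-integrability is not a consequence of the Fisher bound alone and must be imported from the energy bound plus superlinearity---the paper uses exactly the same ingredient, both here and in the identification step, though it is less explicit that this lies outside the lemma's stated hypotheses.
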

\begin{proof} We first aim to establish the strong compactness in $L^2(\Omega)$ of the family $\{\hat{\ell}^h - \overline{\ell}^h \}_{h>0}$. By Lemma~\ref{lemma:poincare}, we get the uniform bound for $L^2$ norm:
    \begin{align*}
        \sup_{h>0} \|\hat{\ell}^h - \overline{\ell}^h \|_2^2 \leq \sup_{h>0} C_\Omega \calS^h(\rho^h) \leq C_\Omega C_\calS.
    \end{align*}

    To apply the Riesz-Fr\'echet-Kolmogorov theorem for the family $\{\hat{\ell}^h - \overline{\ell}^h\}_{h>0}$, we need to prove that for $\eta\in\R^d$ the value of $\|\hat{\ell}^h(\cdot - \eta) - \hat{\ell}^h\|_2$ is vanishing uniformly as $|\eta|\to 0$. The approach to this proof is similar to the one in Lemma~\ref{lemma:poincare}. Notice that
    \begin{align*}
        \int_{\Omega_{|\eta|}} |\hat{\ell}^h(z - \eta) - \hat{\ell}^h (z) |^2 \dd z
        &= \sum_{x\in\calT^h} \int_{Q_h(x) \cap \Omega_{|\eta|}} |\hat{\ell}^h(z - \eta) - \hat{\ell}^h (z) |^2 \dd z
    \end{align*}
    For a fixed $z\in\Omega$, there exists a unique pair $x, y \in \calT^h$ such that $z\in Q_h(x)$ and $(z - \eta) \in Q_h(y)$. Furthermore, we can find a path between $x$ and $y$ by following a chain of vertices $\{z_i\}_{i=0,\dots,K}$ such that $Q_h(z_i) \cap [z, z - \eta]$, where $[z, z - \eta]$ is the line segment between $z$ and $z - h$. By construction, $z_0 = x$ and $z_K = y$. Therefore, using Jensen's inequality, we get
    \begin{align*}
        \sum_{x\in\calT^h} \int_{Q_h(x) \cap \Omega_{|\eta|}} &|\hat{\ell}^h(z - \eta) - \hat{\ell}^h (z) |^2 \dd z
        = \sum_{x\in\calT^h} \int_{Q_h(x) \cap \Omega_{|\eta|}} \Big| \sum_{i=0}^{K-1} ( \ell^h(z_{i+1}) - \ell^h(z_i) ) \Big|^2 \dd x \\
        &\leq \sum_{x\in\calT^h} \int_{Q_h(x) \cap \Omega_{|\eta|}} K \sum_{i=0}^{K-1} | \ell^h(z_{i+1}) - \ell^h(z_i) |^2 \dd x \\
        &\leq \sum_{x\in\calT^h} \int_{Q_h(x) \cap \Omega_{|\eta|}} K \sum_{z\in\calT^h} \sum_{\vh \in \vdh} | \ell^h(z + \vh) - \ell^h(z) |^2 \Ind_{H([x, x - \eta], 2h)}(z) \dd x,
    \end{align*}
    where $H([x, x - \eta], 2h) = \{ p\in\R^d : \mathrm{dist} ([x, x - \eta], p) < 2h \}$. The bound on the number of cells in the path was found in \eqref{eq:bound-length-path}
    $$
        K \leq C_{path} \frac{|\eta| + h}{h}.
    $$
    Therefore,
    \begin{align*}
        \sum_{x\in\calT^h} \int_{Q_h(x)} K &\sum_{z\in\calT^h} \sum_{\vh \in \vdh} | \ell^h(z + \vh) - \ell^h(z) |^2 \Ind_{\mathrm{H}([x, x - \eta], 3h)}(z) \dd x \\
        &\leq C_{path} \frac{|\eta| + h}{h} \sum_{z\in\calT^h} \sum_{\vh \in \vdh} | \ell^h(z + \vh) - \ell^h(z) |^2 \int_{\Omega_{|\eta|}} \Ind_{\mathrm{H}([x, x - \eta], 3h)}(z) \dd x \\
        &\leq C_{path} \frac{|\eta| + h}{h} C_{d-1} (|\eta| + 2h) (2h)^{d-1} \sum_{z\in\calT^h} \sum_{\vh \in \vdh} | \ell^h(z + \vh) - \ell^h(z) |^2 \\
        &\leq C (|\eta| + h)^2 \calS^h(\rho^h).
    \end{align*}
    To sum up, we obtain
    $$
        \int_{\Omega_{|\eta|}} |\hat{\ell}^h(z - \eta) - \hat{\ell}^h (z) |^2 \dd z \leq C (|\eta| + h)^2 \calS^h(\rho^h).
    $$

     The Riesz-Fr\'echet-Kolmogorov theorem \cite[Theorem~4.26]{brezis2010functional} provides relative compactness of $\{\hat{\ell}^h - \overline{\ell}^h \}_{h>0}$ in $L^2(\Omega)$. Let $g$ be an arbitrary subsequential limit of $\{\hat{\ell}^h - \overline{\ell}^h \}_{h>0}$. An application of \eqref{eq:shift-estimate} gives
    $$
        \int_{\Omega_{|\eta|}} |g(x - \eta) - g(x)|^2 \dd x
        = \lim_{h\to 0} \int_{\Omega_{|\eta|}} |\hat{\ell}^h(x - \eta) - \hat{\ell}^h(x)|^2 \dd x
        \leq C C_\calS |\eta|^2,
    $$
    which implies that $g \in H^1(\Omega)$ by the characterization of $H^1$ in terms of difference quotients \cite[Theorem~11.75]{leoni2017first}.

    We claim that $\{\overline{\ell}^h\}_{h>0}$ is uniformly bounded, i.e.
    $ \inf_{h>0} \overline{\ell}^h > -\infty$ and  $\sup_{h>0} \overline{\ell}^h < \infty$. First, we prove the upper bound. Suppose the opposite that there exists a (not relabeled) sequence such that $\overline{\ell}^h \to \infty$. The $L^2$ convergence $\hat\ell^h - \overline{\ell}^h \to g$ implies that there exists a further subsequence such that $\hat\ell^h(x) -\overline{\ell}^h \to g(x)$ pointwise a.e. on $\Omega$. Thus, together we obtain that $\hat\ell^h(x) \to \infty$ for a.e. $x\in\Omega$. Since $\hat{\ell}^h = \ell(\hat{u}^h)$ with $\ell$ being continuous and monotonically non-decreasing function, $\hat\ell^h(x) \to \infty$ for a.e. $x\in\Omega$ implies $\hat{u}^h(x) \to \infty$ for a.e. $x\in\Omega$, which is a contradiction with $\int_\Omega \hat{u}^h(x) \dd x = 1$ for all $h>0$. 

    For the lower bound, suppose that there exists a (not relabeled) subsequence such that $\overline{\ell}^h \to -\infty$. Arguing similarly to the upper bound, we arrive at the claim that there exists a subsequence such that $\hat{\ell}^h(x) \to -\infty$ for a.e. $x\in\Omega$. 
    Then we would deduce $\hat{u}^h(x) \to 0$ for a.e. $x\in\Omega$. Together with the equi-integrability of $\hat{u}^h$ (guaranteed by \eqref{ass:internal-energy-superlinear}), this would be again acontradiction to the condition $\int_\Omega \hat{u}^h(x) \dd x = 1$.
     
    Since $\{\overline{\ell}^h\}_{h>0}$ is uniformly bounded, the family $\{\hat{\ell}^h \}_{h>0}$ is compact in $L^2(\Omega)$ with any subsequencial limit being in $H^1(\Omega)$. Again, one can get the pointwise a.e.\ convergence up to subsequence, and using that $\ell$ has a well-defined inverse, we conclude $\hat{u}^{h}(x) \to (\ell^{-1}\circ g)(x)$ a.e.\ on $\Omega$. On the other hand, we have $\hat{\rho}^h \rightharpoonup \rho$ narrowly in $\calP(\Omega)$. Since the weak limit is unique, we obtain $\rho = (\ell^{-1}\circ g) \calL^d$. Therefore, for a.e. $x\in \Omega$, it holds that
    $$
        u(x) \coloneq \frac{\dd \rho}{\dd \calL^d}(x) 
        = \limsup_{r\to 0} \intbar_{B_r(x)} (\ell^{-1} \circ g) (z) \dd z = \ell^{-1} (g(x)).
    $$
    Consequently, $g(x) = \ell(u(x))$ a.e. on $\Omega$.
\end{proof}


\begin{lemma}\label{lem:V-doesnt-matter}
    Let $V\in \Lip(\Omega)$ and $\{\rho^h\}_{h>0}$ be a family of probability measures on $\calP(\calT^h)$. The uniform bound on the Fisher information with the potential $V$ is equivalent to the uniform bound on the Fisher information without the potential, i.e.,
    $$
        \sup_{h>0} \calS^h_V(\rho^h) < \infty
        \qquad \Longleftrightarrow \qquad
        \sup_{h>0} \calS^h(\rho^h) < \infty.
    $$
\end{lemma}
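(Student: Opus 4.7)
The plan is to prove the two bounds of the equivalence by showing that $\calS^h_V$ and $\calS^h$ differ by a term that is bounded uniformly in $h$ thanks to the Lipschitz regularity of $V$. The key inputs are the inequality $|V(x+\vh)-V(x)|\leq \mathrm{Lip}(V)\,h$ for any $\vh\in\vdh$, the elementary inequality $|a+b|^2 \leq 2|a|^2+2|b|^2$ (and symmetrically $|a|^2\leq 2|a+b|^2+2|b|^2$), and the fact that $\rho^h$ is a probability measure.

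First I would introduce the shorthand
\begin{equation*}
    a_{x,\vh}:=\frac{\ell(u^h_{x+\vh})-\ell(u^h_x)}{h},\qquad b_{x,\vh}:=\sqrt{\max(u^h_x,u^h_{x+\vh})}\,\frac{V(x+\vh)-V(x)}{h},
\end{equation*}
so that $\calS^h(\rho^h)=\tfrac14\sum_{x,\vh}|a_{x,\vh}|^2 h^d$ and $\calS^h_V(\rho^h)=\tfrac14\sum_{x,\vh}|a_{x,\vh}+b_{x,\vh}|^2 h^d$. Applying the elementary inequalities above immediately yields
\begin{equation*}
    \calS^h(\rho^h)\leq 2\calS^h_V(\rho^h)+\frac{1}{2}\sum_{x\in\calT^h}\sum_{\vh\in\vdh}|b_{x,\vh}|^2 h^d,
\end{equation*}
and similarly $\calS^h_V(\rho^h)\leq 2\calS^h(\rho^h)+\tfrac12\sum_{x,\vh}|b_{x,\vh}|^2 h^d$.

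The second step is to bound the error term $E_h:=\sum_{x,\vh}|b_{x,\vh}|^2 h^d$ uniformly in $h$. Using $|V(x+\vh)-V(x)|^2\leq \mathrm{Lip}(V)^2 h^2$ and then $\max(u^h_x,u^h_{x+\vh})\leq u^h_x+u^h_{x+\vh}$, I compute
\begin{equation*}
    E_h\leq \mathrm{Lip}(V)^2\sum_{x\in\calT^h}\sum_{\vh\in\vdh}\bigl(u^h_x+u^h_{x+\vh}\bigr)h^d \leq 4d\,\mathrm{Lip}(V)^2\sum_{x\in\calT^h}u^h_x h^d = 4d\,\mathrm{Lip}(V)^2,
\end{equation*}
where the factor $4d$ comes from the fact that $\vdh$ has $2d$ elements and each vertex appears twice in the double sum (once as $x$, once as $x+\vh$), and the last equality uses $\sum_x u^h_x h^d=\sum_x \rho^h_x=1$.

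Combining the two steps gives the pair of inequalities
\begin{equation*}
    \calS^h(\rho^h)\leq 2\calS^h_V(\rho^h)+2d\,\mathrm{Lip}(V)^2,\qquad \calS^h_V(\rho^h)\leq 2\calS^h(\rho^h)+2d\,\mathrm{Lip}(V)^2,
\end{equation*}
valid for every $h>0$, from which the claimed equivalence of uniform boundedness is immediate. No step is particularly delicate here: the only thing to double-check is the combinatorial count of the sum over edges and the choice of $\max\leq$ sum to make the masses combine to the total mass, which is $1$ because $\rho^h\in\calP(\calT^h)$.
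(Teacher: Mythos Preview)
Your proof is correct and follows essentially the same approach as the paper: both bound the potential-dependent part $\sum_{(x,y)\in\Sigma^h}(V(y)-V(x))^2\max(u^h_x,u^h_y)h^{d-2}$ uniformly via the Lipschitz regularity of $V$ and the mass constraint, then conclude by Young's inequality (which is exactly your $|a+b|^2\leq 2|a|^2+2|b|^2$). Your write-up is in fact more careful with constants than the paper's terse one-line argument.
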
 
\begin{proof} Since the part of the Fisher information depending on the potential is uniformly bounded
$$
    \sum_{(x,y)\in\Sigma^h} (V(y) - V(x))^2 \max(u^h_x, u^h_y) h^{d-2} 
    \leq \Lip^2(V) \sum_{(x,y)\in\Sigma^h} (u^h_x + u^h_y) h^d \leq \Lip^2(V),
$$
the proof follows by the Young inequality.
\end{proof}

\subsection{Convergence results} \label{sec:convergence}

To complete the proof of Theorem~\ref{th:main-convergence-diffusion}, we need to consider the liminf inequality for the Fisher information. Various results on discrete-to-continuum $\Gamma$-convergence of quadratic functionals that apply to our setting are available in the literature, see for instance \cite{alicandro2004general}. We present a simple proof of the liminf inequality for completeness.  

We define an approximation for $\nabla \ell(u)$ in the following way
\begin{equation}\label{eq:disc-grad}
    \dnabla^h \hat{\ell}^h \coloneq \frac{1}{2} \sum_{x\in\calT^h} \sum_{\vh\in\vdh} \Ind_{Q_h(x+\vh/2)} \frac{\vh}{h} \frac{\hat{\ell}^h(x + \vh) - \hat{\ell}^h(x)}{h}.
\end{equation}
Note that we use the family of cells $\{Q_h(x+\vh/2) \}_{x\in\calT^h}$ which are dual to the family of cells $\{Q_h(x) \}_{x\in\calT^h}$ used for the reconstruction of $\hat{\ell}^h$.

\begin{lemma}[Convergence of discrete gradients]\label{lem:convergence-disc-grad}
    Let a family $\{\rho^h\}_{h>0}$ with $\rho^h\in\calP(\calT^h)$ be such that $\hat{\rho}^h \rightharpoonup \rho$ in narrowly $\calP(\Omega)$ and
    $$
        \sup_{h>0} \calS^h (\rho^h) \eqcolon C_\calS < \infty.
    $$
    Let $\{\hat{\ell}^h\}_{h>0}$ and $\{\dnabla^h \hat{\ell}^h\}_{h>0}$ be defined as in \eqref{eq:reconstruct-ell} and \eqref{eq:disc-grad} respectively. Then, up to a subsequence,
    \begin{align*}
        \hat{\ell}^h \to \ell(u) \qquad \text{strongly in } L^2(\Omega), \\
        \dnabla^h \hat{\ell}^h \rightharpoonup \nabla \ell(u) \qquad \text{weakly in } L^2(\Omega),
    \end{align*}
    where $u = \dd \rho / \dd\calL^d \in L^1(\Omega)$. Furthermore, we have
    $$
        \liminf_{h\to 0} \calS^h_V(\rho^h) \geq \calS_V(\rho).
    $$
\end{lemma}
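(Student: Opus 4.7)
The first assertion, strong $L^2$ convergence $\hat\ell^h \to \ell(u)$, will come directly from Lemma~\ref{lem:strong-compactness}: extracting that lemma's subsequence gives both the strong $L^2$ limit and $\ell(u) \in H^1(\Omega)$. Along the same subsequence $\hat u^h \to u$ almost everywhere, and equi-integrability of $\{\hat u^h\}$ (from the mass constraint together with the superlinearity \eqref{ass:internal-energy-superlinear}) will upgrade this to $L^1(\Omega)$ convergence and, via Vitali, to $\sqrt{\hat u^h} \to \sqrt u$ in $L^2(\Omega)$. Lemma~\ref{lem:V-doesnt-matter} will also supply $\sup_h \calS^h_V(\rho^h) < \infty$.

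For the weak convergence of the discrete gradients, a direct computation from \eqref{eq:disc-grad} gives $\|\dnabla^h \hat\ell^h\|_{L^2(\Omega;\R^d)}^2 = 2\calS^h(\rho^h)$, which is uniformly bounded. So up to a further subsequence $\dnabla^h\hat\ell^h \rightharpoonup G$ weakly in $L^2(\Omega;\R^d)$, and I must identify $G$. I will test against $\varphi \in C_c^\infty(\Omega;\R^d)$ and perform a discrete summation by parts, rewriting $\int_\Omega \varphi \cdot \dnabla^h \hat\ell^h$ as $-\int_\Omega \hat\ell^h\, D^h\varphi$, where $D^h\varphi$ is the dual discrete divergence built from centered finite differences of the cell averages of $\varphi$. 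Because $\varphi \in C^1$, $D^h\varphi \to \mathrm{div}\,\varphi$ uniformly; combined with the strong $L^2$ convergence $\hat\ell^h \to \ell(u)$ this yields $\int\varphi\cdot G = -\int\ell(u)\,\mathrm{div}\,\varphi = \int\varphi\cdot\nabla\ell(u)$, the last step using $\ell(u)\in H^1(\Omega)$. Density of $C_c^\infty$ fields in $L^2$ then forces $G = \nabla\ell(u)$.

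For the liminf inequality I will assemble an augmented discrete gradient that incorporates the potential:
\begin{equation*}
G^h \coloneq \dnabla^h\hat\ell^h + \Phi^h,\qquad \Phi^h \coloneq \frac12 \sum_{x\in\calT^h}\sum_{\vh\in\vdh} \Ind_{Q_h(x+\vh/2)}\,\frac{\vh}{h}\,\sqrt{\max(u^h_x,u^h_{x+\vh})}\,\frac{V(x+\vh)-V(x)}{h},
\end{equation*}
so that $\|G^h\|_{L^2}^2 = 2\calS^h_V(\rho^h)$. The plan is to prove $G^h \rightharpoonup \nabla\ell(u)+\sqrt u\,\nabla V$ weakly in $L^2$; lower semicontinuity of $\|\cdot\|_{L^2}^2$ under weak convergence will then give $\liminf_h 2\calS^h_V(\rho^h) \geq 2\calS_V(\rho)$, as required. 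Since the $\dnabla^h\hat\ell^h$ summand has just been treated, everything reduces to showing $\Phi^h \to \sqrt u\,\nabla V$ in $L^2$.

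The hard part will be exactly this last convergence, which I will establish strongly in $L^2(\Omega;\R^d)$. I decompose $\Phi^h = \sum_{i=1}^d \Phi^{h,i}$ along coordinate directions; each $\Phi^{h,i}$ lives on the staggered tiling $\{Q_h(x+he_i/2) : x\in\calT^h\}$ and factors as the product of (i) the piecewise-constant reconstruction of $\sqrt{\max(u^h_x,u^h_{x+he_i})}$ and (ii) the forward difference quotient $(V(x+he_i)-V(x))/h$. Factor (ii) converges almost everywhere to $\partial_i V$ by Rademacher's theorem applied to $V\in\Lip(\R^d)$, and is uniformly bounded by $\Lip(V)$, hence converges in $L^p$ for every finite $p$. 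For factor (i), the pointwise bound $\max(u^h_x,u^h_{x+he_i}) \leq u^h_x + u^h_{x+he_i}$ transfers the equi-integrability of $\hat u^h$ to the max-reconstruction; at every Lebesgue point of $u$ both cells sampled by the max tend to $u$, giving almost-everywhere convergence of the max-reconstruction to $u$; Vitali then upgrades this to strong $L^2$ convergence of $\sqrt{\max(\cdots)}$ to $\sqrt u$. The product of a strongly $L^2$-convergent factor and a uniformly $L^\infty$-bounded, almost-everywhere convergent factor is strongly $L^2$-convergent, so $\Phi^{h,i} \to \sqrt u\,\partial_i V\, e_i$ in $L^2$; summing over $i$ completes the argument.
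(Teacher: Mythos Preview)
Your proof is correct and follows essentially the same route as the paper: strong $L^2$ compactness from Lemma~\ref{lem:strong-compactness}, the $L^2$ bound on $\dnabla^h\hat\ell^h$, identification of the weak limit via a discrete integration-by-parts, and the liminf via weak $L^2$ lower semicontinuity of the augmented gradient. Two minor differences are worth noting. First, in treating the potential contribution you aim for \emph{strong} $L^2$ convergence of $\Phi^h$ by combining a.e.\ convergence of the $\sqrt{\max}$ factor (via Vitali) with a.e.\ convergence of the $V$-difference quotient (via Rademacher); the paper instead uses a triangle-inequality shift argument to get strong $L^2$ convergence of the $\sqrt{\max}$ factor and pairs it only with weak-$*$ $L^\infty$ convergence of the $V$-quotient, obtaining weak $L^2$ convergence of the product, which already suffices. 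Your approach works but the ``both cells tend to $u$ at Lebesgue points'' step is slightly informal: what you actually have is $\hat u^h\to u$ a.e.\ along the chosen subsequence, and you need a further subsequence to get the same for the shifted version $\hat u^h(\cdot-he_i)$; the paper's shift argument in $L^2$ avoids this juggling. Second, you write $\|\dnabla^h\hat\ell^h\|_{L^2}^2=2\calS^h(\rho^h)$ while the paper writes it without the factor $2$; your count is in fact the correct one, but the discrepancy is immaterial to the argument.
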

\begin{proof}
    The asserted convergence $\hat{\ell}^h \to \ell(u)$ follows from Lemma~\ref{lem:strong-compactness}.

    For the convergence of the gradients $\dnabla^h \hat{\ell}^h \rightharpoonup \nabla \ell(u)$, we first show that $\dnabla^h \hat{\ell}^h\in L^2(\Omega)$ with
    the $L^2$-norm equal to the discrete Fisher information defined in Definition~\ref{def:discret-Fisher} up to a multiplicative constant. Indeed, we calculate
    \begin{align} \label{eq:L2-norm-eq-Fisher}
        \| \dnabla^h \hat{\ell}^h \|^2_{L^2}
        &= \frac{1}{4} \int_\Omega \sum_{x\in\calT^h} \sum_{\vh\in\vdh} \Ind_{Q_h(x+\vh/2)} (z) \frac{|\hat{\ell}^h(x + \vh) - \hat{\ell}^h(x)|^2}{h^2} \dd z \\
        &= \frac{1}{4} \sum_{x\in\calT^h} \sum_{\vh\in\vdh} h^d \frac{|\hat{\ell}^h(x + \vh) - \hat{\ell}^h(x)|^2}{h^2} = \calS^h(\rho^h). \notag
    \end{align}
    The assumed uniform bound on $\calS^h$ implies that there exists a (non-relabeled) subsequence and $\zeta\in L^2(\Omega)$ such that $\dnabla^h \hat{\ell}^h \rightharpoonup \zeta$ weakly in $L^2(\Omega)$.

    To identify the limit $\zeta$, we show an approximate integration-by-parts formula for $\hat\ell^h$ and $\dnabla^h \hat{\ell}^h$.   Let $\varphi\in C^\infty_c (\Omega; \R^d)$. On the one hand, we have
    \begin{align*}
        \int (\nabla\cdot\varphi) \, \hat\ell^h
        &= \sum_{x\in\calT^h} \ell(u^h_x) \int_{Q_h(x)} (\nabla\cdot\varphi)(z) \dd z
        = \sum_{x\in\calT^h} \ell(u^h_x) \sum_{\vh\in\vdh} \int_{I(x,\vh)} \varphi(z) \cdot \frac{\vh}{h} \,\calH^{d-1}(\dd z),
    \end{align*}
    where we used the divergence theorem. We denote $\varphi^h_{x,\vh} \coloneq \intbar_{I(x,\vh)} \varphi(z) \calH^{d-1}(\dd z)$ and note that $\varphi^h_{x,\vh} = \varphi^h_{x+\vh,-\vh}$ to obtain
    \begin{align*}
        \int (\nabla\cdot\varphi) \, \hat\ell^h &=
        \sum_{x\in\calT^h} h^{d-1} \sum_{\vh\in\vdh} \ell(u^h_x) \,\varphi^h_{x,\vh} \cdot \frac{\vh}{h}
        = \frac{1}{2} \sum_{x\in\calT^h} h^{d-1} \sum_{\vh\in\vdh} \Big( \ell(u^h_x) \varphi^h_{x,\vh} \cdot \frac{\vh}{h} - \ell(u^h_{x+\vh}) \varphi^h_{x,\vh} \cdot \frac{\vh}{h} \Big) \\
        &= - \frac{1}{2} \sum_{x\in\calT^h} h^{d-1} \sum_{\vh\in\vdh} \big( \ell(u^h_{x+\vh}) - \ell(u^h_x) \big) \intbar_{Q_h(x+\vh/2)} \varphi^h_{x,\vh} \cdot \frac{\vh}{h} \dd z.
    \end{align*} 
    On the other hand, by the definition of $\dnabla^h \hat{\ell}^h$, we have
    \begin{align*}
        \int \varphi \cdot \dnabla^h \hat{\ell}^h 
        &= \frac{1}{2} \sum_{x\in\calT^h} h^{d-1} \sum_{\vh\in\vdh} \big( \ell(u^h_{x+\vh}) - \ell(u^h_x) \big) \intbar_{Q_h(x+\vh/2)} \varphi (z) \cdot \frac{\vh}{h} \dd z.
    \end{align*}
    Since $\varphi$ is smooth, there exists $C_\varphi> 0$ such that $|\varphi^h_{x,\vh} - \varphi(z)| \leq C_\varphi h$ for all $z\in Q_h(x+\vh/2)$ for any $x\in\calT^h$ and $\vh\in\vdh$. Therefore,
    \begin{align*}
        \bigg| \int (\nabla\cdot\varphi) \, \hat\ell^h + \int \varphi \cdot \dnabla^h \hat{\ell}^h  \bigg| 
        &\leq 
        \sum_{x\in\calT^h} \frac{h^{d-1}}{2} \sum_{\vh\in\vdh} | \ell(u^h_{x+\vh}) - \ell(u^h_x) | \intbar_{Q_h(x+\vh/2)} |\varphi (z) - \varphi^h_{x,\vh}| \dd z \\
        &\leq \frac{C_\varphi}{2} \sum_{x\in\calT^h} \sum_{\vh\in\vdh} | \ell(u^h_{x+\vh}) - \ell(u^h_x) | h^d \\
        &\leq \frac{C_\varphi}{2} \bigg( \sum_{x\in\calT^h} \sum_{\vh\in\vdh} | \ell(u^h_{x+\vh}) - \ell(u^h_x) |^2 h^{d-2} \bigg)^{1/2} \bigg(  \sum_{x\in\calT^h} \sum_{\vh\in\vdh} h^{d+2} \bigg)^{1/2} \\
        &\leq \frac{C_\varphi}{2} \sqrt{\calS^h(\rho^h)} h \sqrt{2d |\Omega|}.
    \end{align*}
    Using again the uniform bound on $\calS^h(\rho^h)$, we obtain the approximate integration-by-parts formula:
    $$
        \int (\nabla\cdot\varphi) \, \hat\ell^h = - \int \varphi \cdot \dnabla^h \hat{\ell}^h + O(h).
    $$
    Passing $h\to 0$ yields
    $$
        \int (\nabla\cdot\varphi) \, \ell(u) = - \int \varphi \cdot \zeta \qquad \text{for any } \varphi \in C_c^\infty(\Omega;\R^d).
    $$
    Since $\varphi$ is arbitrary, we obtain $\zeta = \nabla \ell(u)$. 

    We conclude that the liminf inequality for $\calS^h(\rho^h)$ holds due to \eqref{eq:L2-norm-eq-Fisher} and the lower-semicontinuity of the $L^2$ norm:
    \begin{equation*}
        \liminf_{h\to 0} \calS^h(\rho^h) = \liminf_{h\to 0} \| \dnabla^h \hat{\ell}^h \|^2_{L^2} \geq \|\nabla \ell(u) \|^2_{L^2}.
    \end{equation*}

    In the case where the external potential $V$ is present, we have to consider the other term that we denote by $\hat{\xi}^h\in L^2(\Omega;\R^d)$:
    \begin{align*}
        \hat{\xi}^h_i \coloneq \hat{\mu}^h_i \dnabla^h \hat{V}^h \quad i=1,\dots,d; \qquad \hat{\mu}^h_i &\coloneq \sum_{x\in\calT^h}  \Ind_{Q_h(x + he_i/2)} \sqrt{\max \big(u^h_x, u^h_{x+he_i} \big)},  \\
        \dnabla^h \hat{V}^h &\coloneq \frac{1}{2} \sum_{x\in\calT^h} \sum_{\vh\in\vdh} \Ind_{Q_h(x + \vh/2)} \frac{\vh}{h}  \frac{V(x+\vh) - V(x)}{h},
    \end{align*}
    that gives us $\calS^h_V = \|\dnabla^h \hat{\ell}^h + \hat{\xi}^h \|^2_{L^2}$.

    The compactness result in Lemma~\ref{lem:strong-compactness} implies that up to a subsequence $\hat{u}^h$ converges a.e. to $u$. Together with the equi-integrability of $\hat{u}^h$ (which comes from the superlinearity assumption on $f$), this implies $\hat{u}^h\to u$  strongly in $L^1(\Omega)$. This in turns implies $\sqrt{\hat{u}^h}\to \sqrt{u}$ strongly in $L^2(\Omega)$. The, we denote $v^h_i \coloneq \hat{u}^h (\cdot + he_i) = \sum_{x\in\calT^h}  \Ind_{Q_h(x)} u^h_{x+he_i}$. We claim that $\max (\sqrt{\hat{u}^h}, \sqrt{v^h_i}) \to \sqrt{u}$ strongly in $L^2$ for all $i=1,\dots,d$. Indeed,
    \begin{align*}
        \big\| \max (\sqrt{\hat{u}^h}, \sqrt{v^h_i}) &- \sqrt{u} \big\|_{L^2}
        \leq \big\| \sqrt{v^h_i} - \sqrt{\hat{u}^h} \big\|_{L^2} +  \big\|  \sqrt{\hat{u}^h} - \sqrt{u} \big\|_{L^2} \\
        &\leq \big\| \sqrt{\hat{u}^h} (\cdot + he_i) - \sqrt{u} (\cdot + he_i) \big\|_{L^2} + \big\| \sqrt{u} (\cdot + he_i) - \sqrt{u} \big\|_{L^2}  +  \big\|  \sqrt{\hat{u}^h} - \sqrt{ué} \big\|_{L^2} \\
        &= \big\| \sqrt{u} (\cdot + he_i) - \sqrt{u} \big\|_{L^2} +  \big\|  \sqrt{\hat{u}^h} - \sqrt{u} \big\|_{L^2} 
    \end{align*}
    where $\big\| \sqrt{u} (\cdot + he_i) - \sqrt{u} \big\|_{L^2} \to 0$ as $h\to 0$ holds by the density argument (\cite[Lemma~4.3]{brezis2010functional}). Thus, $\max (\sqrt{\hat{u}^h}, \sqrt{v^h_i}) \to \sqrt{u}$ strongly in $L^2$ and, consequently, $\hat{\mu}^h_i \to \sqrt{u}$ strongly in $L^2$ for all $i=1,\dots,d$. Using a similar argument as in the first part of the proof, one can show that $\dnabla^h \hat{V}^h \rightharpoonup \nabla V$ weakly in $L^\infty$. Therefore, $\hat{\xi}^h \rightharpoonup \sqrt{u} \nabla V$ weakly in $L^2$ and we conclude that we have
    $$
        \liminf_{h\to 0} S^V_h \geq \| \nabla \ell(u) + \sqrt{u} \nabla V \|_{L^2} ^2 = S^V.\qedhere
    $$
\end{proof}

\begin{remark}\label{remark:interaction-3}
    In continuation of Remarks~\ref{remark:interaction-1} and \ref{remark:interaction-2} on including the interaction potential $W$ instead of the external potential $V$, we note that the proof of Lemma~\ref{lem:convergence-disc-grad} can be adapted in the following way. One can show that 
    $$
        \widehat{(W*\rho)}^h \coloneq \sum_{x\in\calT^h} \Ind_{Q_h(x)} \sum_{z\in\calT^h} W(x - z)\rho^h_z \xrightarrow{h \to 0} W*\rho \quad \text{strongly in } L^1(\Omega), 
    $$
    using the regularity of $W$ and strong convergence $\hat{u}^h \to u$ in $L^1(\Omega)$. Secondly, one can also prove that the approximation for the gradient of the convolution
    $$
        \dnabla^h \widehat{(W*\rho)}^h \coloneq
        \frac{1}{2} \sum_{x\in\calT^h} \sum_{\vh\in\vdh} \Ind_{Q_h(x + \vh/2)} \frac{\vh}{h}  \frac{\sum_{z\in\calT^h} \big( W(x+\vh - z) - W(x - z) \big) \rho^h_z}{h}
    $$
    converges weakly in $L^\infty$ to $\nabla (W*\rho)$. This can be done by means of the approximate integration-by-parts formula as in the first part of the proof, where the key estimate is
    \begin{align*}
        \bigg| \int (\nabla\cdot \varphi) \widehat{(W*\rho)}^h &+ \int \varphi \cdot \dnabla^h \widehat{(W*\rho)}^h \bigg|
        \leq \frac{C_\varphi}{2} \sum_{x\in\calT^h} \sum_{\vh\in\vdh} \bigg| \sum_{z\in\calT^h} \big( W(x + \vh - z) - W(x - z) \big) \rho^h_z \bigg| h^d \\
        &\leq C_\varphi \Lip(W) |\Omega| d h \xrightarrow{h\to 0} 0.
    \end{align*}
\end{remark}

\bigskip

Finally, we have all the ingredients to summarize the proof of Theorem~\ref{th:main-convergence-diffusion}.
\begin{proof}
    We start with the variational inequality derived in Lemma~\ref{lem:var-ineq} 
    \begin{equation}\label{eq:var-ineq-proof}
        0 \geq \calF^h(\rho^{h,\tau}_N) - \calF^h(\rho^h_0) + \frac{1}{2}\int_0^T \| \check{v}^{h,\tau}_t \|^2_{L^2(\check{\rho}^{h,\tau}_t)} \dd t + \int_0^T \calS^h_V (\Tilde{\rho}^{h,\tau}_t) \lambda_\varepsilon^{h,\tau} (\dd t) + \frac{dT}{4} \frac{h}{\tau} \log \varepsilon + o(1)|_{h\to 0}.
    \end{equation}
    We aim to pass $h, \tau, h/\tau \to 0$ (we will simply write $h,\tau\to 0$ implying that $h/\tau \to 0$ holds) in all components of \eqref{eq:var-ineq-proof}. 
    
    We recall that the discrete energy can be written in the integral form as
    \begin{align*}
        \calF^h(\rho^{h,\tau}_T) = \int_\Omega f\bigg( \frac{\dd \hat{\rho}^{h,\tau}_T}{\dd\calL^d} \bigg) \dd x + \int_\Omega \hat{V}^h(x) \hat{\rho}^{h,\tau}_T (\dd x).
    \end{align*}
    Since $\rho^{h,\tau}_T \rightharpoonup \rho_T$ narrowly as $h,\tau\to 0$, then
    $$
        \liminf_{h,\tau\to 0} \int_\Omega f\bigg( \frac{\dd \hat{\rho}^{h,\tau}_T}{\dd\calL^d} \bigg) \dd x \geq \int_\Omega f\bigg( \frac{\dd \rho_T}{\dd\calL^d} \bigg) \dd x
    $$
    and
    \begin{align*}
        \lim_{h,\tau\to 0} \int_\Omega \hat{V}^h(x) \hat{\rho}^{h,\tau}_T (\dd x)
        &= \lim_{h,\tau\to 0} \int_\Omega V(x) \hat{\rho}^{h,\tau}_T (\dd x)
        + \lim_{h,\tau\to 0} \int_\Omega (\hat{V}^h(x) -V(x)) \hat{\rho}^{h,\tau}_T (\dd x) \\
        &= \int_\Omega V(x) \rho_T (\dd x) + \lim_{h\to 0} \Lip(V) h = \int_\Omega V(x) \rho_T (\dd x).
    \end{align*}
    Using also the assumption on the initial data, we get
    $$
        \liminf_{h,\tau\to 0} \big( \calF^h(\rho^{h,\tau}_T) - \calF^h(\rho^{h,\tau}_0) \big) \geq \calF(\rho_T) - \calF(\rho_0).
    $$

    For the Fisher information, we claim that, for any fixed $\varepsilon>0$, we have
    \begin{align*}
        \liminf_{h,\tau\to 0} \int_0^T \calS_V^h(\Tilde{\rho}^{h,\tau}_t) \lambda^{h,\tau}_\varepsilon(\dd t) \geq (1-\varepsilon)\int_0^T \calS_V(\rho_t) \dd t.
    \end{align*}
   This can be obtained by combining the two following facts:
   \begin{itemize}
       \item first, by Lemma~\ref{lem:compactness-curves} and Lemma~\ref{lem:convergence-disc-grad}, we have, for any sequence $t_n\to t$
    \begin{align*}
        \liminf_{h,\tau\to 0,n\to\infty} \calS^h(\Tilde{\rho}^{h,\tau}_{t_n}) 
        \geq \calS(\rho_t);
    \end{align*}
    \item second, we have the narrow convergence $\lambda^{h,\tau}_\varepsilon\rightharpoonup (1-\varepsilon)\mathcal{L}$ as $h,\tau\to 0$.
   \end{itemize}
   these two facts together allow to conclude via a standard argument proven, for instance, in \cite[Proposition 5.5]{BraBut}.
   


    By the lower semicontinuity of the Benamou-Brenier functional, we have
    \begin{align*}
        \liminf_{h,\tau\to 0} \| \check{v}^{h,\tau}_t \|^2_{L^2(\check{\rho}^{h,\tau}_t)} 
        \geq \| v_t \|^2_{L^2(\rho_t)},
    \end{align*}
    where we applied the convergence established in Lemma~\ref{lem:compactness-curves}.
    
    To summarize, in the limit $h,\tau\to 0$, we recover 
    \begin{align*}
        0 \geq \calF(\rho_T) - \calF(\rho_0) + \int_0^T \Big\{ \frac{1}{2} \int_\Omega |v_t|^2 \dd \rho_t + (1-\varepsilon)\calS(\rho_t) \Big\} \dd t,
    \end{align*}
    and the EDI inequality that we want to obtain as in Proposition~\ref{prop:charact} can be recovered taking $\varepsilon\to 0$.
\end{proof}

\section{Crowd motion model}\label{sec:crowd-motion}

In this section, we study the convergence of the fully discrete JKO scheme for the crowd motion model presented in Section~\ref{sec:crowd-motion-model}. We recall that the driving energy in this case becomes
\begin{equation}\label{eq:energy-infinity-disc}
    \Fcm^h(\rho^h) = \begin{cases}
        \displaystyle\sum_{x\in\calT^h} V(x) \rho^h_x, \qquad \text{if } u^h \leq 1 \\
        +\infty, \qquad \text{otherwise.}
    \end{cases} 
\end{equation}
for a probability measure $\rho^h = u^h h^d$ supported on the grid $\calT^h$. The fully discrete JKO scheme reads
\begin{definition}\label{eq:dJKO-crowd-motion}
    For a given $h>0$ and $\tau>0$ of the from $\tau=T/N$ for a fixed $T>0$ and some $N\in\N$, we define
    \begin{equation}\label{eq:JKO-crowd-disc} \tag{JKO$_\text{cm}^{h,\tau}$}
        \rho^{h,\tau}_{k+1} \in \arg\min_{\rho^h\in\calP(\calT^h)} \Big\{ \Fcm^h(\rho^h) + \frac{1}{2\tau} W_2^2(\rho^h, \rho_k^{h,\tau}) \Big\}
    \end{equation}
with a given initial datum $\rho^h_0\in \calP(\calT^h)$.
\end{definition}

The main result of this section is the following convergence result for \eqref{eq:JKO-crowd-disc}.
\begin{theorem}\label{th:main-result-cm-sec}
    Assume $V\in C^1(\R^d)$. Let $\{\rho^{h,\tau}_k\}_{k=1,\dots,N}$ be the family of \eqref{eq:JKO-crowd-disc} minimizers as in Definition~\ref{eq:dJKO-crowd-motion}.  Let the family of initial data $\{\rho^h_0\}_{h>0}$ be such that there exists $\rho_0 = u_0 \calL^d \in\calP(\Omega)$ with $\|u_0\|_{L^\infty} \leq 1$ such that
    $$
        \rho^h_0 \rightharpoonup \rho_0 \quad \text{narrowly as } h\to 0 \quad \text{and} \quad \lim_{h\to 0} \Fcm(\rho^h_0) = \calF_\text{CM}(\rho_0).
    $$

     Then a suitable interpolation in time of $\{\rho^{h,\tau}_k\}_{k=1,\dots,N}$ converges as $h, \tau, h/\tau \to 0$ uniformly in time for the $W_2$ distance to an absolutely continuous curve $[0,T] \ni t \mapsto \rho_t \in \calP(\Omega)$ such that $\rho_t$ is a distributional solution of \eqref{eq:crowd-motion-PDE}.
\end{theorem}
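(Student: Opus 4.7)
The overall strategy mirrors the four-step plan used for Theorem~\ref{th:main-convergence-diffusion}: (i) introduce a variational interpolation between successive minimizers of \eqref{eq:JKO-crowd-disc}, (ii) derive a one-step inequality of De~Giorgi type and transform it into a lower bound on $W_2^2/(2\tau^2)$ by a suitable \emph{discrete} Fisher information $\Scm^h$, (iii) prove enough compactness in space and time, and (iv) pass to the limit $h,\tau,h/\tau\to 0$ to recover the EDI \eqref{eq:EDI-crowd-motion}, so that Proposition~\ref{prop:charact-crowd-motion} identifies the limit as a distributional solution of \eqref{eq:crowd-motion-PDE}. The main structural change with respect to Section~\ref{sec:nonlinear-diffusion} is that the role played there by $f'(u^h)$ is now played by a discrete pressure variable $p^h$, which arises as the Lagrange multiplier of the constraint $u^h \leq 1$.

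The starting point is the analog of Lemma~\ref{lemma:optimality-cond}. Applying convex duality to the KKT conditions of \eqref{eq:JKO-crowd-disc}, a minimizer $\rho^{h,\tau}=u^{h,\tau} h^d$ must satisfy, for some $p^{h,\tau}\in[0,+\infty)^{\calT^h}$ with $p^{h,\tau}_x(1-u^{h,\tau}_x)=0$ for every $x\in\calT^h$ and some Kantorovich potential $\varphi^{h,\tau}$ for $W_2^2(\rho^{h,\tau},\rho^h_0)$, that $p^{h,\tau}(x)+V(x)+\varphi^{h,\tau}(x)/\tau$ is constant on $\spt(\rho^{h,\tau})$. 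Defining the discrete Fisher information
\begin{equation*}
\Scm^h(\rho^h,p^h)\coloneq \frac14\sum_{x\in\calT^h}\sum_{\vh\in\vdh}\max(u^h_x,u^h_{x+\vh})\,\frac{\bigl|(p^h_{x+\vh}+V(x+\vh))-(p^h_x+V(x))\bigr|^2}{h^2}\,h^d,
\end{equation*}
one can then repeat verbatim the computations of Lemma~\ref{lemma:lower-bound-by-slope}, with $p^h+V$ replacing $f'(u^h)$: using the inequality $\varphi^{h,\tau}(x+\vh)-\varphi^{h,\tau}(x)\leq \vh\cdot(x-y+\vh/2)$ on $\spt(\gamma^{h,\tau})$, the positive-part trick, Lemma~\ref{lemma:inequality}, and the directional summation \eqref{eq:vector-from-components}, the factor $\max(u^h_x,u^h_{x+\vh})$ now appears directly from separating edges according to the sign of $(x-y)\cdot\vh$ (no inequality of the type \eqref{eq:ineq-for-f-and-ell} is required here). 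One obtains
\begin{equation*}
\frac{1}{2\tau^2}W_2^2(\rho^{h,\tau},\rho^h_0)\geq \Bigl(1-\frac{h}{2\tau}\Bigr)\Scm^h(\rho^{h,\tau},p^{h,\tau})-\frac{dh}{4\tau}.
\end{equation*}
Plugging this into the variational-interpolant inequality \eqref{eq:disc-step-var-ineq} (with $\Fcm^h$ in place of $\calF^h$) and summing in $k$ as in Lemma~\ref{lem:var-ineq} yields a crowd-motion analog of the discrete EDI, with Fisher information integrated against $\lambda^{h,\tau}_\varepsilon$.

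For compactness, the bound $\Fcm^h(\rho^{h,\tau}_k)\leq \Fcm^h(\rho^h_0)<\infty$ and the argument of Lemma~\ref{lem:compactness-curves} yield subsequential uniform convergence of $\check\rho^{h,\tau}$ and $\tilde\rho^{h,\tau}$ to the same $\rho\in C([0,T];\calP(\Omega))$ with $\|u_t\|_\infty\leq 1$ (the latter follows by narrow lower-semicontinuity of the constraint through piecewise-constant reconstructions). The delicate new point is the compactness of the pressure: from the $\Scm^h$ bound we control a discrete $H^1$-seminorm of $\hat p^{h,\tau}+\hat V^h$ weighted by $\max(u^h_x,u^h_{x+\vh})$, which on the saturated cells where $p^h\neq 0$ gives a uniform $L^2$-bound on a discrete gradient of $\hat p^{h,\tau}$. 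To promote this to a genuine $H^1$-bound one uses the Poincar\'e--Wirtinger inequality of Lemma~\ref{lemma:poincare} on the subdomain where the mean of $\hat p^{h,\tau}$ is controlled, noting that at almost every time there exist unsaturated cells ($u_x^{h,\tau}<1$) where $p^{h,\tau}_x=0$ thanks to the complementarity, thereby anchoring the pressure. The conclusion, after applying the discrete-to-continuum result of Lemma~\ref{lem:convergence-disc-grad} adapted to the pair $(u^h,p^h)$, is that, for a.e.\ $t$, $\hat p^{h,\tau}_t\rightharpoonup p_t$ weakly in $H^1(\Omega)$ along a subsequence, with $p_t\geq 0$ and $p_t(1-u_t)=0$ (the latter from the strong $L^1$ convergence $\hat u^{h,\tau}_t\to u_t$ together with $\hat p^{h,\tau}_t(1-\hat u^{h,\tau}_t)=0$).

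Finally one passes to the limit in the discrete EDI exactly as in the closing argument of Section~\ref{sec:convergence}: lower-semicontinuity of the Benamou--Brenier functional handles the kinetic term, narrow convergence $\lambda^{h,\tau}_\varepsilon\rightharpoonup (1-\varepsilon)\calL$ combined with the liminf inequality for $\Scm^h$ (which reduces to weak $L^2$ lower-semicontinuity of $\sqrt{u^h}(\nabla p^h+\nabla V)$ in the spirit of Lemma~\ref{lem:convergence-disc-grad}) gives $\liminf \int\Scm^h\,\lambda^{h,\tau}_\varepsilon\geq (1-\varepsilon)\int_0^T\Scm(u_t,p_t)\,dt$, the error term $\tfrac{dT}{4}\tfrac{h}{\tau}\log\varepsilon$ vanishes because $h/\tau\to 0$, and finally $\varepsilon\to 0$. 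We obtain \eqref{eq:EDI-crowd-motion}, and Proposition~\ref{prop:charact-crowd-motion} concludes. The main obstacle in the plan is obtaining the $H^1$-compactness and correct identification of the limit pressure, and in particular the complementarity $p(1-u)=0$ in the limit; handling it cleanly requires carefully combining the discrete KKT structure with the strong $L^1$ convergence of $\hat u^{h,\tau}$ obtained via the reconstruction procedure of Section~\ref{sec:nonlinear-diffusion}.
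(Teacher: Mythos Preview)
Your overall architecture is correct and matches the paper's, but there are two genuine gaps that would make the argument fail as written.

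\textbf{The weight in the one-step bound is not $\max$.} Repeating the computation of Lemma~\ref{lemma:lower-bound-by-slope} with $p^h+V$ in place of $f'(u^h)$, the contribution of an undirected edge $(x,y)\in\Sigma^h$ is
\[
u_x\,[((p_x+V(x))-(p_y+V(y)))^{+}]^{2}+u_y\,[((p_y+V(y))-(p_x+V(x)))^{+}]^{2}
= \Lambda_{p+V}(u_x,u_y)\,(p_x+V(x)-p_y-V(y))^{2},
\]
where $\Lambda_{p+V}$ is the \emph{upwind} weight (it selects $u_x$ if $p_x+V(x)\ge p_y+V(y)$), not $\max(u_x,u_y)$. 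In Lemma~\ref{lemma:lower-bound-by-slope} the equality with $\max$ used the monotonicity of $f'$, which fails for $p+V$; in general $\Lambda_{p+V}\le \max$, so your claimed lower bound is strictly stronger than what the argument gives and is in fact false (take $u_x=1$, $u_y=0$, $p_x=p_y=0$, $V(y)>V(x)$). The paper fixes this by truncating the pressure: setting $g_{\varepsilon}(s)=(s-\varepsilon)^{+}$ with $\varepsilon=(\Lip(V)+1)h$, one shows $\Lambda_{p+V}(u_x,u_y)(p_x-p_y)^{2}\ge (g_{\varepsilon}(p_x)-g_{\varepsilon}(p_y))^{2}$, which removes the weight at the price of an $o(1)$ error. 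This truncation is precisely what yields an \emph{unweighted} discrete $H^1$-seminorm bound on the pressure, without which your Poincar\'e--Wirtinger step cannot be applied.

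\textbf{No strong $L^1$ convergence of the density is available.} In the diffusion case the strong compactness of $\hat u^{h,\tau}$ came from Lemma~\ref{lem:strong-compactness} via $\ell$; here there is no such mechanism, and the paper obtains only $\tilde u^{h,\tau}_t\rightharpoonup u_t$ weak-$*$ in $L^\infty$. Consequently your passage to the limit in $\hat p^{h,\tau}(1-\hat u^{h,\tau})=0$ does not go through, and neither does the liminf for $\Scm^h$ via ``weak $L^2$ lower-semicontinuity of $\sqrt{u^h}(\nabla p^h+\nabla V)$'', since $\sqrt{\hat u^{h,\tau}}$ does not converge strongly. The paper handles the complementarity $p(1-u)=0$ by mollifying the pressure, using the strong $L^2$ convergence of time-averages $\intbar_s^t \tilde p^{h,\tau}_r\,\dd r$ (obtained from the truncated gradient bound via Riesz--Fr\'echet--Kolmogorov), and an estimate of the type $\int (m_\epsilon*\tilde p_r)(\tilde u_t-\tilde u_r)\le \|\nabla(m_\epsilon*\tilde p_r)\|_2\,W_2(\tilde\rho_t,\tilde\rho_r)$; the liminf for $\Scm^h$ is obtained by splitting into the three terms $(\nabla p)^2$, $\nabla p\cdot\nabla V$, and $|\nabla V|^2\Lambda_V(u)$ and treating each separately.
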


For what follows, it is convenient to define upwind-type mean:
\begin{equation}\label{eq:average-p-V}
        \Lambda_{p+V}(u_x, u_y) \coloneq \begin{cases}
            u_x, \qquad p_x + V(x) \geq p_y + V(y) \\
            u_y, \qquad p_x + V(x) < p_y + V(y) 
        \end{cases} 
        \qquad (x, y) \in\Sigma^h.
\end{equation}
We use the notation $\Lambda_V(u_x, u_y) $ if the average depends only on the potential $V$ ($p = 0$ in \eqref{eq:average-p-V}).

\begin{definition} Let $(u^h, p^h)$ be a density-pressure pair satisfying the relation $p^h_x (1 - u^h_x)$ for all $x\in\calT^h$. Given a smooth potential $V$, we define \emph{the discrete crowd-motion Fisher information} as 
    \begin{equation*}
        \Scm^h(u^h, p^h) = \frac{1}{4} \sum_{(x,y)\in\Sigma^h} \Big( (p^h_x - p^h_y)^2 + 2 (p^h_x - p^h_y) (V(x) - V(y)) + (V(x) - V(y))^2 \Lambda_V(u^h_x, u^h_y) \Big) h^{d-2}.
    \end{equation*}
\end{definition}

For $\varepsilon > 0$, we also introduce the truncation function $g_\varepsilon(s) \coloneq (s - \varepsilon)^+$, where $s^+$ denotes the positive part of $s$, e.i.\ $s^+ = s\Ind_{s>0}$. 

\begin{lemma}\label{lem:lower-bound-by-slope-CM} Let $\Fcm^h$ be defined as in \eqref{eq:energy-infinity-disc} with $V\in C^1(\R^d)$. Let $\rho^h_0\in \calP(\calT^h)$ be given and $\rho^{h,\tau}$ be a minimizer of \eqref{eq:JKO-crowd-disc} for some $h,\tau > 0$. Then the following lower bound holds
    $$
        \frac{1}{2\tau^2} W_2^2(\rho^{h,\tau},\rho^h_0)
        \geq \Big( 1 - \frac{h}{2\tau} \Big) \Scm^h(u^{h,\tau}, g_{\varepsilon(h)}(p^{h,\tau}) ) - \frac{dh}{4\tau} + o(1)|_{h\to 0},
    $$
    with $\varepsilon(h) = (\Lip(V) + 1) h$.
\end{lemma}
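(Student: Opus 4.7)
The plan is to mirror Lemma~\ref{lemma:lower-bound-by-slope} and Corollary~\ref{cor:slope-bound-with-V}, with the pressure $p^{h,\tau}$ playing the role of $f'(u^{h,\tau})$ and the truncation $g_{\varepsilon(h)}$ playing a role analogous to the error-correction step in the proof of Corollary~\ref{cor:slope-bound-with-V}. First, I would derive the discrete optimality conditions for \eqref{eq:JKO-crowd-disc} by adapting Lemma~\ref{lemma:optimality-cond}: the constraint $u^{h,\tau} \leq 1$ brings in a Lagrange multiplier $p^{h,\tau} \geq 0$ with the complementarity relation $p^{h,\tau}_x(1-u^{h,\tau}_x)=0$, and the KKT conditions read $V(x) + p^{h,\tau}_x + \varphi^{h,\tau}(x)/\tau = -\lambda$ for $x \in \spt(\rho^{h,\tau})$, together with the weaker inequality $V(x) + \varphi^{h,\tau}(x)/\tau \geq -\lambda$ on $\calT^h \setminus \spt(\rho^{h,\tau})$ (where $p^{h,\tau}_x = 0$ automatically). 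Consequently, for $x \in \spt(\rho^{h,\tau})$ and any $x+\vh \in \calT^h$,
\[
\varphi^{h,\tau}(x+\vh) - \varphi^{h,\tau}(x) \;\geq\; \tau\bigl[(p^{h,\tau}_x - p^{h,\tau}_{x+\vh}) + (V(x) - V(x+\vh))\bigr].
\]

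For $(x,y)\in\spt(\gamma^{h,\tau})$ and $\vh\in\vdh$, the Kantorovich-potential argument from \eqref{eq:lower-bound-a>b-h}--\eqref{eq:lower-bound-abs-a>b-h} transplants unchanged to give, when $(x-y)\cdot\vh \geq 0$,
\[
\frac{|(x-y)\cdot\vh|}{h\tau} \;\geq\; \frac{[(p^{h,\tau}_x - p^{h,\tau}_{x+\vh})+(V(x)-V(x+\vh))]^+}{h}-\frac{h}{2\tau},
\]
with the symmetric inequality for $(x-y)\cdot\vh < 0$. Squaring via Lemma~\ref{lemma:inequality}, summing over $\vh\in\vdh$ (using \eqref{eq:vector-from-components}), and integrating against $\gamma^{h,\tau}$, the directed double sum collapses into a sum over undirected edges weighted by the upwind average from \eqref{eq:average-p-V}:
\[
\frac{W_2^2(\rho^{h,\tau},\rho^h_0)}{2\tau^2} \;\geq\; \frac{1-h/(2\tau)}{4}\sum_{(x,y)\in\Sigma^h} \Lambda_{p+V}(u^{h,\tau}_x,u^{h,\tau}_y)\bigl(p^{h,\tau}_x-p^{h,\tau}_y+V(x)-V(y)\bigr)^2 h^{d-2} - \frac{dh}{4\tau}.
\]

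The final step, which I expect to be the main technical obstacle, is to pass from this $\Lambda_{p+V}$-weighted sum to the precise quantity $\Scm^h(u^{h,\tau},\tilde p)$ with $\tilde p := g_{\varepsilon(h)}(p^{h,\tau})$, up to an $o(1)_{h\to 0}$ remainder. The choice $\varepsilon(h)=(\Lip V + 1)h$ is sharp for two reasons: on any edge with $\max(\tilde p_x,\tilde p_y)>0$, at least one endpoint has $p^{h,\tau}>\varepsilon(h)>\Lip(V)h\geq|V(x)-V(y)|$, so complementarity combined with the strict domination of $p$ over the $V$-increment forces $\Lambda_{p+V}(u^{h,\tau}_x,u^{h,\tau}_y)=1$ (verified by a short case analysis on the ordering of $p+V$); on edges with $\tilde p_x=\tilde p_y=0$, a separate case analysis splitting on the sign of $V(x)-V(y)$ yields the general inequality $\Lambda_{p+V}\geq\Lambda_V$, which recovers the target $(V(x)-V(y))^2\Lambda_V$ factor in $\Scm^h$. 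Combined edge-by-edge, these two observations produce a pointwise bound
\[
\Lambda_{p+V}(u_x,u_y)\bigl(p_x - p_y + V(x)-V(y)\bigr)^2 \;\geq\; (\tilde p_x - \tilde p_y)^2 + 2(\tilde p_x - \tilde p_y)(V(x)-V(y)) + (V(x)-V(y))^2 \Lambda_V(u_x,u_y) - R_{xy},
\]
where $R_{xy}$ is a remainder of size $O(\varepsilon(h))$ times the local increments $|\tilde p_x-\tilde p_y|+|V(x)-V(y)|$. The hardest part is showing $\sum_{(x,y)\in\Sigma^h} R_{xy} h^{d-2} = o(1)_{h\to 0}$: I would combine the scaling $\varepsilon(h) = O(h)$, the uniform bound $u^{h,\tau} \leq 1$, the $C^1$ regularity of $V$, and an a priori discrete $H^1$ control on $\tilde p$ inherited from the dissipation inequality itself, in the same spirit as the treatment of the error term $e_h$ in the proof of Corollary~\ref{cor:slope-bound-with-V}. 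Once that bound is in hand, dividing by $4$ and multiplying by $(1-h/(2\tau))$ completes the argument.
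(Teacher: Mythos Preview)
Your setup is correct and matches the paper: the optimality/KKT conditions yield a pressure $p^{h,\tau}$ with $p^{h,\tau}(1-u^{h,\tau})=0$, and the Kantorovich-potential argument from Lemma~\ref{lemma:lower-bound-by-slope} transplants to give the $\Lambda_{p+V}$-weighted lower bound you wrote. Your two case observations --- that $\Lambda_{p+V}(u_x,u_y)=1$ whenever $\max(\tilde p_x,\tilde p_y)>0$, and that $\Lambda_{p+V}\geq\Lambda_V$ in general --- are also correct and appear in the paper's proof.

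The gap is in the remainder control. Your edge-by-edge bound $R_{xy}=O(\varepsilon(h))\bigl(|\tilde p_x-\tilde p_y|+|V(x)-V(y)|\bigr)$ does not sum to $o(1)$. The $|V(x)-V(y)|$ contribution alone already gives
\[
\varepsilon(h)\sum_{(x,y)\in\Sigma^h}|V(x)-V(y)|\,h^{d-2}\;\lesssim\;\varepsilon(h)\,\Lip(V)\,h\cdot|\Sigma^h|\,h^{d-2}\;\sim\;\frac{\varepsilon(h)}{h}\;=\;O(1),
\]
and the $|\tilde p_x-\tilde p_y|$ contribution, even granting an $H^1$ bound on $\tilde p$, yields by Cauchy--Schwarz only $O(\varepsilon(h)/h)\cdot(\text{discrete }H^1\text{ seminorm})^{1/2}=O(1)$. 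Moreover, invoking $H^1$ control on $\tilde p$ ``from the dissipation inequality itself'' is circular: that control is precisely what the lemma is meant to establish. The analogy with Corollary~\ref{cor:slope-bound-with-V} is misleading here, since that argument exploited the quantitative convexity \eqref{ass:internal-energy-plus-potential} of $f$, which has no counterpart for the hard constraint.

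The paper avoids this by \emph{not} attempting a combined pointwise bound. It expands $(p_x-p_y+V(x)-V(y))^2\Lambda_{p+V}$ into three pieces and bounds each separately. The pure-$p$ and pure-$V$ pieces are handled essentially as you describe. For the cross-term, the paper first shows the pointwise inequality $(p_x-p_y)(V(x)-V(y))\Lambda_{p+V}(u_x,u_y)\geq(p_x-p_y)(V(x)-V(y))$ (a short sign analysis using complementarity), which drops the weight entirely. Only then does it replace $p$ by $\tilde p=g_{\varepsilon(h)}(p)$, and the resulting error is handled by \emph{summation by parts}:
\[
\sum_{(x,y)\in\Sigma^h}\bigl[(p_x-\tilde p_x)-(p_y-\tilde p_y)\bigr](V(x)-V(y))\,h^{d-2}
=2\sum_{x\in\calT^h}(p_x-\tilde p_x)\sum_{\vh\in\vdh}\bigl(V(x)-V(x+\vh)\bigr)\,h^{d-2}.
\]
The inner sum is a discrete Laplacian of $V$, bounded by $d\,\omega(h)\,h$ where $\omega$ is the modulus of continuity of $\nabla V$ (this is where $V\in C^1$, not merely $V\in\Lip$, is used). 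Combined with $|p_x-\tilde p_x|\leq\varepsilon(h)$, the total error is $O(\omega(h)\,\varepsilon(h)/h)=O(\omega(h))=o(1)$. This summation-by-parts step, which converts a first-order edge quantity into a second-order vertex quantity, is the missing idea in your outline.
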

\begin{proof}
    There exists a Kantorovich potential $\varphi$ corresponding to the optimal transport from $\rho^{h,\tau}$ to $\rho^h_0$ such that
    \begin{equation*}
        \sum_{x\in\calT^h} \bigg( V(x) + \frac{\varphi_x}{\tau} \bigg) \rho^h_x \geq  \sum_{x\in\calT^h} \bigg( V(x) + \frac{\varphi_x}{\tau} \bigg) \rho^{h,\tau}_x \qquad \text{for all } \rho^h \in \calP(\calT^h),
    \end{equation*}
    by an argument as in \cite[Lemma~3.1]{maury2010macroscopic}. Thus, for a fixed $\varphi$, $\rho^{h,\tau} = u^{h,\tau} h^d$ solves the minimization problem
    $$
        \rho^{h,\tau} \in \arg\min_{\rho^h \in \calP(\calT^h), ~ u^h \leq 1} \sum_{x\in\calT^h} \bigg( V(x) + \frac{\varphi_x}{\tau} \bigg) \rho^h_x.
    $$
    The minimizer of this linear minimization problem with a linear constraint has to have the form
    $$
        u^{h,\tau}_x = \begin{cases}
            1, & V(x) + \varphi_x / \tau < c \\
            \in [0, 1], & V(x) + \varphi_x / \tau = c \\
            0, & V(x) + \varphi_x / \tau > c,
        \end{cases}
    $$
    where $c\in \R$ is chosen in the way to ensure $\sum_{x\in \calT^h} \rho^{h,\tau}_x = 1$.
    Employing the Kantorovich potential $\varphi$, one can define a pressure-like function $p$ (as in \cite[Lemma~3.3]{maury2010macroscopic}):
    \begin{equation}\label{eq:def-pressure}
        p \coloneq \bigg(c - V - \frac{\varphi}{\tau} \bigg)^+.
    \end{equation}
    
    By a similar arguments as in Lemma~\ref{lemma:lower-bound-by-slope}, one can derive the inequality:
    \begin{align*}
        \frac{1}{2\tau^2} W_2^2(\rho^{h,\tau},\rho^h_0)
        \geq \frac{1}{2} \Big( 1 - \frac{h}{2\tau} \Big)
        \sum_{(x,y)\in\Sigma^h} \frac{(p_x - p_y + V(x) - V(y))^2}{h^2} \Lambda_{p+V}(u_x, u_y) \, h^d - \frac{dh}{4\tau},
    \end{align*}
    where we use the notation $\Lambda_{p+V}(u_x, u_y)$ introduced in \eqref{eq:average-p-V} and $(u, p)$ is the density-flux pair corresponding to $\rho^{h,\tau}$, i.e. $\rho^{h,\tau} = u h^d$ and $p(1 - u) = 0$ on $\calT^h$.
    
    The approach of Corollary~\ref{cor:slope-bound-with-V} is not applicable in this case, because the proof relies on the strict monotonicity of $f'$ that the pressure variable does not satisfy. We provide a different argument exploiting the properties of the pressure. 

    Consider an edge $(x,y) \in \Sigma^h$ and assume (without loss of generality) that $p_x + V(x) > p_y + V(y)$. The contribution of this edge to the sum above is
    $$
        \frac{(p_x - p_y + V(x) - V(y))^2}{h^2} u_x h^d
        = \frac{(p_x - p_y)^2}{h^2} u_x h^d + 2\frac{(p_x - p_y) (V(x) - V(y))}{h^2} u_x h^d + \frac{(V(x) - V(y))^2}{h^2} u_x h^d.
    $$

    Fix $\varepsilon > 0$ such that $\varepsilon \geq (\Lip(V) + 1)h$. We claim that we have
    \begin{equation}\label{eq:ineq-P}
        \frac{(p_x - p_y)^2}{h^2} u_x h^d \geq \frac{(g_\varepsilon(p_x) - g_\varepsilon(p_y))^2}{h^2} h^d.
    \end{equation}
    To show \eqref{eq:ineq-P}, we first use that $g_\varepsilon$ is 1-Lipshitz:
    $$
        (p_x - p_y)^2 u_x  
        \geq (g_\varepsilon(p_x) - g_\varepsilon(p_y))^2 u_x, 
    $$
    and we can remove the coefficient $u_x$ from the right-hand side if we have $p_x>0$ because of the density-pressure relation $p_x (1 - u_x) = 0$. If, instead, we assume $p_x = 0$, then, by the assumption on the edge $(x,y)\in\Sigma^h$, we have $p_y < V(x) - V(y) \leq \Lip(V) h < \varepsilon$ and $g_\varepsilon(p_y) = 0$. Moreover, 
    $p_x=0$ implies $g_\varepsilon(p_x)=0$ so that we have
    $$
        (p_x - p_y)^2 u_x \geq (g_\varepsilon(p_x) - g_\varepsilon(p_y))^2=0
    $$
    and we get \eqref{eq:ineq-P}. Thus, summing up over all the edges yields
    \begin{equation}\label{eq:lower-bound-p}
        \sum_{(x,y)\in\Sigma^h} \frac{(p_x - p_y)^2}{h^2} \Lambda_{p+V}(u_x, u_y) h^d 
        \geq \frac{1}{2} \sum_{(x,y)\in\Sigma^h} \frac{(g_\varepsilon(p_x) - g_\varepsilon(p_y))^2}{h^2} h^d.
    \end{equation}

    Consider the cross-term:
    \begin{align*}
        (p_x - p_y)(V(x) - V(y)) \, u_x 
        = (p_x - p_y)(V(x) - V(y)) + (u_x - 1) (p_x - p_y)(V(x) - V(y)).
    \end{align*}
    If $p_x > 0$, then $u_x = 1$, and the last term equals 0. In the case when $p_x = p_y =0$, the last term vanishes as well. It is left to consider the case when $p_x = 0$ and $p_y > 0$. Since we assume that $p_x + V(x) > p_y + V(y)$, then $V(x) > V(y)$, and it follows that the last term is positive. Thus, we get the inequality
    \begin{align*}
        (p_x - p_y)(V(x) - V(y)) \, u_x &\geq (p_x - p_y)(V(x) - V(y)) \\
        &= (g_\varepsilon(p_x) - g_\varepsilon(p_y)) (V(x) - V(y))
        + (p_x - g_\varepsilon(p_x) - p_y + g_\varepsilon(p_y)) (V(x) - V(y)).
    \end{align*}

    Since the cross-term is symmetric in for edges $(x,y)\in\Sigma^h$, it will sum up as
    \begin{align*}
        \Big| \sum_{(x,y)\in\Sigma^h} (p_x - g_\varepsilon(p_x) - p_y + g_\varepsilon(p_y)) &(V(x) - V(y)) h^{d-2} \Big| \\
        &= 2 \Big| \sum_{x\in\calT^h} (p_x - g_\varepsilon(p_x)) \sum_{\vh \in \vdh} (V(x) - V(x+\vh)) h^{d-2} \Big| \\
        &\leq 2 \varepsilon \sum_{x\in\calT^h} \Big| \sum_{\vh \in \vdh} (V(x) - V(x+\vh)) \Big| h^{d-2}.
    \end{align*}    
    Since $V\in C^1(\R^d)$, then for any $x\in\calT^h$ away from the boundary the discrete divergence of the gradient is bounded as:
    \begin{align*}
         \Big| \sum_{\vh \in \vdh} (V(x+\vh) - V(x)) \Big|
         &= \Big| \sum_{i=1}^d ( V(x+he_i) + V(x+he_i) - 2 V(x) ) \Big| \\
         &\hspace{-1cm}= \Big| \sum_{i=1}^d \Big( \int_0^1 \nabla V (x + \lambda h e_i) \dd \lambda \cdot (h e_i) - \int_0^1 \nabla V (x + (1 - \lambda) h e_i) \dd \lambda \cdot (h e_i) \Big) \Big| \\
         &\hspace{-1cm}\leq \sum_{i=1}^d \int_0^1 \big| \nabla V (x + \lambda h e_i) - \nabla V (x + (1 - \lambda) h e_i) \big| \dd \lambda \, h
         \leq d \omega(h) h,
    \end{align*}
    where $\omega$ is the modulus of continuity of $\nabla V$. Therefore, sum up, we obtain 
    \begin{align*}
        \Big| \sum_{x\in\calT^h} (p_x - g_\varepsilon(p_x)) \sum_{\vh \in \vdh} (V(x) - V(x+\vh)) h^{d-2} \Big|
        \leq d \omega(h) \frac{\varepsilon}{h} \sum_{x\in\calT^h}  h^d = d |\Omega| \omega(h) \frac{\varepsilon}{h}.
    \end{align*}
    In total, the cross-term enjoys the following lower bound:
    \begin{align}\label{eq:lower-bound-pV}
        2 \sum_{(x,y)\in\Sigma^h} \frac{(p_x - p_y) ( V(x) - V(y))}{h^2} 
        &\Lambda_{p+V}(u_x, u_y) \, h^d \\
        &\geq \sum_{(x,y)\in\Sigma^h} \frac{(g_\varepsilon(p_x) - g_\varepsilon(p_y)) (V(x) - V(y))}{h^2} h^d
        + 2d|\Omega| \omega(h) \frac{\varepsilon }{h}. \notag
    \end{align}

    To rewrite the third term depending on the potential, we claim that $\Lambda_{p+V} (u_x, u_y) \geq \Lambda_{V} (u_x, u_y)$, where, analogously to \eqref{eq:average-p-V}, $\Lambda_{V} (u_x, u_y)$ is defined as
    \begin{equation*}
        \Lambda_{V}(u_x, u_y) \coloneq \begin{cases}
            u_x, \qquad V(x) \geq V(y) \\
            u_y, \qquad V(x) < V(y).
        \end{cases}
    \end{equation*}
    To prove the claim, we consider the cases. It is clear that if $p_x=p_y = 0$, then $\Lambda_{V}(u_x, u_y) = \Lambda_{p+V}(u_x, u_y)$. The equality also holds if $p_x > 0$ and $p_y > 0$ because it follows that $u_x = u_y = 1$ and $\Lambda_{V}(u_x, u_y) =1= \Lambda_{p+V}(u_x, u_y)$. The last case is (w.l.o.g.) $p_x > 0$ and $p_y = 0$. In this case, two options are possible: either $p_x + V(x) \geq p_y + V_y$ and $\Lambda_{p+V}(u_x, u_y) = u_x = 1 \geq \Lambda_{V}(u_x, u_y)$, or $p_x + V(x) < p_y + V_y$ implying that $V(x) < V_y$ and $\Lambda_{p+V}(u_x, u_y) = u_y =  \Lambda_{V}(u_x, u_y)$. Therefore,
    \begin{equation}\label{eq:lower-bound-V}
        \sum_{(x,y)\in\Sigma^h} \frac{(V(x) - V(y))^2}{h^2} \Lambda_{p+V}(u_x, u_y) h^d 
        \geq \sum_{(x,y)\in\Sigma^h} \frac{(V(x) - V(y))^2}{h^2} \Lambda_{V}(u_x, u_y) h^d.
    \end{equation}

    Combining \eqref{eq:lower-bound-p}, \eqref{eq:lower-bound-pV}, and \eqref{eq:lower-bound-V}, we obtain for any $\varepsilon \geq (\Lip(V) + 1)h$
    \begin{align*}
        \sum_{(x,y)\in\Sigma^h} \frac{(p_x - p_y + V(x) - V(y))^2}{h^2} \Lambda_{p+V}(u_x, u_y) \, h^d
        \geq \Scm^h(u, g_\varepsilon(p)) + 2d|\Omega| \omega(h) \frac{\varepsilon }{h}. 
    \end{align*}
    Choosing $\varepsilon = (\Lip(V) + 1)h$ provides that the error term $\sim \omega (h)$ converges to 0 as $h\to 0$.
\end{proof}

\begin{theorem}[Compactness]  \label{th:compactness-CM}
    Let $\rho_0\in\calP(\Omega)$. Let $\Tilde{\rho}^{h,\tau} = \Tilde{u}^{h,\tau} h^d$ be the variational interpolation from Definition~\ref{def:var-interpolation} for the minimizers of \eqref{eq:JKO-crowd-disc} and $\Tilde{p}^{h,\tau}$ be the corresponding pressure variable constructed in \eqref{eq:def-pressure}.

    Then there exists the density-pressure pair $(u,p)$ with $u \in L^1((0,T), L^1(\Omega))$ and $p \in L^2((0,T), H^1(\Omega))$ such that $p(1 - u) = 0$ and
    \begin{align*}
        &\Tilde{u}^{h,\tau}_t \rightharpoonup u_t \qquad \text{weakly-* in } L^\infty(\Omega) \text{ for } t\in (0,T), \\
        &\Tilde{p}^{h,\tau} \rightharpoonup p \qquad \text{weakly in } L^2((0,T), L^2(\Omega)), \\
        &\intbar_s^t \Tilde{p}^{h,\tau}_r \dd r \to \intbar_s^t p_r \dd r \qquad \text{strongly in } L^2(\Omega) ~\text{ for any } (s,t) \subset (0,T), \\
        &\dnabla^h g_{\varepsilon(h)} (\Tilde{p}^{h,\tau}) \rightharpoonup \nabla p \qquad \text{weakly in } L^2((0,T), L^2(\Omega)),
    \end{align*}
    where the discrete approximation for the gradient $\dnabla^h g_{\varepsilon(h)} (\Tilde{p}^{h,\tau})$ is defined as in \eqref{eq:disc-grad}.
\end{theorem}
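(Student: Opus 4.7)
\textbf{Step 1 (Weak-$*$ compactness of densities).} The constraint built into $\Fcm^h$ forces $\Tilde u^{h,\tau}_t\le 1$ uniformly, giving a uniform $L^\infty$ bound. A verbatim adaptation of Lemma~\ref{lem:compactness-curves} to the crowd-motion setting (using $\rho^{h,\tau}_k$ as competitor and bounding $\sum_k W_2^2(\rho^{h,\tau}_k,\rho^{h,\tau}_{k+1})/\tau$ by the total energy drop) yields uniform-in-time $W_2$ compactness $\Tilde\rho^{h,\tau}\to\rho$. Combined with the $L^\infty$ bound this upgrades to $\Tilde u^{h,\tau}_t\rightharpoonup u_t$ weakly-$*$ in $L^\infty(\Omega)$ pointwise in $t$, with $\rho_t=u_t\calL^d$ and $\|u_t\|_{L^\infty}\le 1$.

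\textbf{Step 2 (Uniform $L^2$ bound on the pressure).} The key ingredient is the adaptation of Lemma~\ref{lem:var-ineq}: using Lemma~\ref{lem:lower-bound-by-slope-CM} in place of Corollary~\ref{cor:slope-bound-with-V} one obtains
\[
\int_0^T \Scm^h\bigl(\Tilde u^{h,\tau}_t,g_{\varepsilon(h)}(\Tilde p^{h,\tau}_t)\bigr)\,\lambda_\varepsilon^{h,\tau}(\dd t)\le C.
\]
Expanding the square in $\Scm^h$ and exploiting $\nabla V\in L^\infty$ isolates a uniform bound on the discrete $L^2$-in-time, $\dot H^1$-in-space seminorm of $g_{\varepsilon(h)}(\Tilde p^{h,\tau})$. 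To promote this seminorm bound to a full $L^2$ bound, we exploit the mass constraint: since $\int \Tilde u^{h,\tau}_t=1$, the set $\{\Tilde u^{h,\tau}_t<1\}$ has uniformly positive Lebesgue measure (in the non-degenerate regime $|\Omega|>1$), and on this set $\Tilde p^{h,\tau}_t=0$, so that $g_{\varepsilon(h)}(\Tilde p^{h,\tau}_t)$ vanishes on a set of uniformly positive measure. A variant of the discrete Poincar\'e--Wirtinger inequality of Lemma~\ref{lemma:poincare} for functions vanishing on a set of positive measure converts the $\dot H^1$-bound into a genuine $L^2((0,T)\times\Omega)$ bound. Since $\|g_{\varepsilon(h)}(\Tilde p^{h,\tau})-\Tilde p^{h,\tau}\|_\infty\le\varepsilon(h)\to 0$, this bound is inherited by $\Tilde p^{h,\tau}$ itself, and we extract a (non-relabeled) subsequence with $\Tilde p^{h,\tau}\rightharpoonup p$ weakly in $L^2((0,T)\times\Omega)$.

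\textbf{Step 3 (Gradient convergence, strong $L^2$ of time averages, complementarity).} The weak convergence $\dnabla^h g_{\varepsilon(h)}(\Tilde p^{h,\tau})\rightharpoonup\nabla p$ in $L^2((0,T)\times\Omega)$ follows by running the approximate integration-by-parts argument of Lemma~\ref{lem:convergence-disc-grad} against time-dependent test functions, which also shows $p\in L^2((0,T);H^1(\Omega))$. The strong $L^2(\Omega)$ convergence of $\intbar_s^t \Tilde p^{h,\tau}_r\,\dd r$ then follows from Rellich--Kondrachov: by Jensen's inequality in time combined with the above gradient bound, each time average is uniformly bounded in $H^1(\Omega)$, hence precompact in $L^2(\Omega)$, and the only possible limit is $\intbar_s^t p_r\,\dd r$. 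Finally, the complementarity $p(1-u)=0$ is obtained by passing to the limit in the discrete identity $\Tilde p^{h,\tau}(1-\Tilde u^{h,\tau})=0$: testing against a smooth $\varphi\ge 0$ on sub-intervals $(s,t)\subset(0,T)$, the strong convergence of time averages of $\Tilde p^{h,\tau}$ pairs with the weak-$*$ convergence of $\Tilde u^{h,\tau}$ (combined with an Aubin--Lions-type argument using the continuity equation to extract extra compactness of $\Tilde u^{h,\tau}$ in a negative Sobolev space) to yield $p(1-u)=0$ almost everywhere.

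\textbf{Main obstacle.} The most delicate point is Step 2: the upgrade from the discrete $\dot H^1$-seminorm bound coming from $\Scm^h$ to a genuine $L^2$ bound on the pressure relies on the quantitative ``zero-set of positive measure'' afforded by $\int u=1<|\Omega|$. Without such an observation the pressure would only be controlled up to additive constants by the Kantorovich potential construction \eqref{eq:def-pressure}, and no weak compactness would be available. A secondary difficulty is the pairing of weak and strong convergences in Step 3, which succeeds only because the time-averaged strong compactness of pressures combines with enough time regularity of densities coming from the continuity equation.
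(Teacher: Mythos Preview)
Your overall architecture matches the paper's: density compactness via the $L^\infty$ bound and Lemma~\ref{lem:compactness-curves}, then a uniform discrete $\dot H^1$-in-space bound on the truncated pressure from the Fisher information, then a Poincar\'e-type upgrade to $L^2$, then gradient convergence via approximate integration by parts, and finally complementarity. Two differences and one genuine soft spot are worth flagging.

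For the $L^2$ bound on the pressure (Step~2), you propose a Friedrichs-type Poincar\'e inequality exploiting that $g_{\varepsilon(h)}(\Tilde p^{h,\tau}_t)$ vanishes on the set $\{\hat{\Tilde u}^{h,\tau}_t<1\}$, which has measure at least $|\Omega|-1>0$. The paper instead uses the standard Poincar\'e--Wirtinger Lemma~\ref{lemma:poincare} and then bounds the mean $\overline{g}^{h,\tau}$ by a contradiction argument (if $\overline{g}^{h,\tau}\to\infty$ then $\Tilde u^{h,\tau}\to 1$ a.e., contradicting $\int u=1<|\Omega|$). Both routes work; yours is more direct but requires writing down a discrete Friedrichs inequality with $h$-uniform constant, which is not in the paper.

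For the strong $L^2$ convergence of time averages you invoke Rellich--Kondrachov, but $\intbar_s^t \Tilde p^{h,\tau}_r\,\dd r$ is piecewise constant, hence not in $H^1(\Omega)$; the paper obtains this via the shift estimate of Lemma~\ref{lem:strong-compactness} and Riesz--Fr\'echet--Kolmogorov. This is only a naming issue. The more substantive gap is your treatment of the complementarity $p(1-u)=0$: an Aubin--Lions argument in negative Sobolev spaces is morally correct (since $W_2$ controls $\dot H^{-1}$ for densities bounded by $1$), but to pair $\Tilde p^{h,\tau}_r(\Tilde u^{h,\tau}_t-\Tilde u^{h,\tau}_r)$ you need $\Tilde p^{h,\tau}_r\in H^1$, which it is not. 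The paper handles this by mollifying the pressure, splitting $\intbar_t^s\!\int \Tilde p^{h,\tau}_r(1-\Tilde u^{h,\tau}_r)$ into a main term with $\Tilde u^{h,\tau}_r$ replaced by $\Tilde u^{h,\tau}_t$, a time-drift term controlled via \cite[Lemma~3.4]{maury2010macroscopic} (essentially $\int\phi\,(\mu-\nu)\le\|\nabla\phi\|_2 W_2(\mu,\nu)$ applied to the mollified pressure), and a mollification error controlled by the shift estimate; the limits $h,\tau\to 0$, then $s\to t$, then mollification parameter $\epsilon\to 0$ are taken in that order. Your sketch would converge to essentially this implementation once you confront the discrete nature of $\Tilde p^{h,\tau}$, so the idea is right but the execution is where the difficulty actually lies---this, rather than Step~2, is the most delicate part of the proof.
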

\begin{proof}
    \emph{Step 1. Convergence of the density.} By Lemma~\ref{lem:compactness-curves}, $\Tilde{\rho}^{h,\tau} \to \rho$ uniformly for the $W_2$ distance, where $\rho\in C([0,T];\calP(\Omega))$. Since $\|\Tilde{u}^{h,\tau}\|_{L^\infty} \leq 1$, we have $\Tilde{u}^{h,\tau}_t \rightharpoonup u_t$ weakly-* in $L^\infty$ for $t\in (0,T)$. 
    
    \emph{Step 2. Convergence of the pressure.} From Lemma~\ref{lem:lower-bound-by-slope-CM}, we have
    $$
        \sup_{h,\tau>0, h/\tau < 1} \int_0^T \Scm^h(\Tilde{u}^{h,\tau}_t, g_{\varepsilon(h)}(\Tilde{p}^{h,\tau}_t) ) \dd t \leq C
    $$  
    with $C>0$ independent of $h$ and $\tau$ and $\varepsilon(h) = (\Lip(V) + 1)h $. By a similar argument as in Lemma~\ref{lem:V-doesnt-matter}, one can deduce that the previous bound implies
    $$
        \sup_{h,\tau>0, h/\tau < 1} \int_0^T \sum_{(x,y)\in\Sigma^h} \big(g_{\varepsilon(h)}(p^{h,\tau}_t(y)) - g_{\varepsilon(h)}(p^{h,\tau}_t(x)) \big)^2 \dd t \leq C,
    $$
    with possibly different constant $C>0$ but still independent of $h$ and $\tau$. We define
    $$
        g^{h,\tau} \coloneq \sum_{x\in\calT^h} g_{\varepsilon(h)} (\Tilde{p}^{h,\tau}(x)) \Ind_{Q_h(x)}.
    $$
    
     Following the proof of Lemma~\ref{lem:strong-compactness} with $g_{\varepsilon(h)}$ instead of $\hat{\ell}^h$, one can derive
    \begin{align*}
        \sup_{h>0} \int_0^T \|g^{h,\tau}_t - \overline{g}^{h,\tau} \|^2_{L^2} \dd t \leq C \quad \text{and} \quad
        \int_0^T \int_{\Omega_{|\eta|}} |g^{h,\tau}_t(z - \eta) - g^{h,\tau}_t (z) |^2 \dd z \dd t \leq C |\eta| \max(|\eta|, h),
    \end{align*}
    where $\displaystyle \overline{g}^{h,\tau} \coloneq \intbar_0^T \intbar_\Omega g^{h,\tau}_t (x) \dd x \dd t$. It is clear that $\overline{g}^{h,\tau} \geq 0$. We also claim that $\displaystyle\sup_{h,\tau>0} \overline{g}^{h,\tau} < \infty$. Suppose that there exists a subsequence such that $\overline{g}^{h,\tau} \to \infty$, then there exists a subsequence such that $g^{h,\tau}_t(x) \to \infty$ pointwise a.e. on $(0, T)\times\Omega$. Consequently, $\Tilde{p}^{h,\tau}_t(x) \to \infty$ pointwise a.e. on $(0, T)\times\Omega$ and $\Tilde{u}^{h,\tau}_t(x) \to 1$ pointwise a.e. on $\Omega$ for a.e. $t\in (0,T)$, which is a contradiction with $\int_\Omega \hat{u}^{h,\tau}_t(x) \dd x = 1$ given that $|\Omega| > 1$.

    Since $\overline{g}^{h,\tau}$ is uniformly bounded, we find that $g^{h,\tau}$ is uniformly bounded in $L^2((0,T),L^2(\Omega))$ and there exists $p\in L^2((0,T),L^2(\Omega))$ such that $g^{h,\tau} \rightharpoonup p$ weakly in $L^2((0,T),L^2(\Omega))$.

    Consider $g^{h,\tau}_{s,t} \coloneq \intbar_t^s g^{h,\tau}_r \dd r$ and $p_{s,t} = \intbar_t^s p_r \dd r$ for some fixed $(t,s)\subset (0,T)$. Of course we have $g^{h,\tau}_{s,t} \rightharpoonup p_{s,t}$ weakly in $L^2(\Omega)$.  Since $\| g^{h,\tau}_{s,t}(\cdot+\eta) - g^{h,\tau}_{s,t} \|_{L^2(\Omega_{|\eta|})} \leq \frac{C}{t-s} |\eta| \max(|\eta|, h)$ for any $\eta\in\R^d$, the Riesz-Fréchet-Kolmogorov theorem implies $g^{h,\tau}_{s,t} \to p_{s,t}$ strongly in $L^2(\Omega)$. Futhermore, we have $\Tilde{p}^{h,\tau}_{s,t} \to p_{s,t}$ strongly in $L^2(\Omega)$, because 
    \begin{equation}\label{eq:converg-av-pressure}
        \| \Tilde{p}^{h,\tau}_{s,t} - p_{s,t} \|_{L_2}
        \leq \| \Tilde{p}^{h,\tau}_{s,t} - g^{h,\tau}_{s,t} \|_{L_2} + \| g^{h,\tau}_{s,t} - p_{s,t} \|_{L_2}
        \leq \varepsilon(h) |\Omega| + \| g^{h,\tau}_{s,t} - p_{s,t} \|_{L_2} \to 0.
    \end{equation}
    
    
    \emph{Step 3. Convergence of the density-pressure relation.} The proof of this step is an adaptation of a similar argument used in \cite[Theorem 2.4]{maury2010macroscopic}. Since the density-pressure relation holds for all the discrete pairs with $h>0$, then
    \begin{align*}
        0 &= \intbar_{t}^{s} \int_\Omega \Tilde{p}^{h,\tau}_r(x) (1 - \Tilde{u}^{h,\tau}_r(x) ) \dd x \dd r \\
        &= \intbar_{t}^{s} \int_\Omega (m_\epsilon * \Tilde{p}^{h,\tau}_r) (x) (1 - \Tilde{u}^{h,\tau}_r(x) ) \dd x \dd r
        + \intbar_{t}^{s} \int_\Omega \big( \Tilde{p}^{h,\tau}_r(x) - (m_\epsilon * \Tilde{p}^{h,\tau}_r) (x) \big) (1 - \Tilde{u}^{h,\tau}_r(x) ) \dd x \dd r \\
        &\eqcolon I^{h,\tau}_\epsilon(t,s) + E^{h,\tau}_\epsilon(t,s),
    \end{align*}
    where $m_\epsilon$ is a standard mollifier. The second term is bounded by the H\"older inequality
    \begin{align*}
        |E^{h,\tau}_\epsilon(t,s)|
        &\leq \bigg( \intbar_{t}^{s} \| \Tilde{p}^{h,\tau}_r - (m_\epsilon * \Tilde{p}^{h,\tau}_r) \|_2^2 \dd r \bigg)^{1/2} \bigg( \intbar_{t}^{s} \| 1 - \Tilde{u}^{h,\tau}_r \|_2^2 \dd r \bigg)^{1/2} \\
        &\leq \sqrt{|\Omega|} \bigg( \intbar_{t}^{s} \| \Tilde{p}^{h,\tau}_r - (m_\epsilon * \Tilde{p}^{h,\tau}_r) \|_2^2 \dd r \bigg)^{1/2}.
    \end{align*}
    We apply the change of variables $\eta = x - y$ and Jensen's inequality to get
    \begin{align*}
        \intbar_{t}^{s} \| \Tilde{p}^{h,\tau}_r - (m_\epsilon * \Tilde{p}^{h,\tau}_r) \|_2^2 \dd r 
        &= \intbar_{t}^{s} \int_\Omega \bigg| \int_\Omega m_\epsilon(x - y) (\Tilde{p}^{h,\tau}_r(x) - \Tilde{p}^{h,\tau}_r(y)) \dd y \bigg|^2 \dd x \\
        &\leq  \intbar_{t}^{s} \int_\Omega \int_{\{\eta\in\R^d: x-\eta \in \Omega\}} m_\epsilon(\eta) | \Tilde{p}^{h,\tau}_r(x) - \Tilde{p}^{h,\tau}_r(x - \eta) |^2 \dd \eta  \dd x \dd r \\
        &=  \int_{\R^d}  m_\epsilon(\eta) \intbar_{t}^{s} \int_{\Omega_{|\eta|}} | \Tilde{p}^{h,\tau}_r(x) - \Tilde{p}^{h,\tau}_r(x - \eta)) |^2 \dd x \dd r \dd \eta  \\
        &\leq \int_{\R^d}  m_\epsilon(\eta) |\eta| \max(|\eta|, h) \dd \eta \leq \epsilon \max(\epsilon, h),
    \end{align*}
    where the second-to-last inequality follows from a similar argument as in Lemma~\ref{lem:strong-compactness} applied to $\intbar_t^s \Tilde{p}^{h,\tau}_r \dd r$. Thus, we obtain
    $$
        |E^{h,\tau}_\epsilon(t,s)| \leq \sqrt{|\Omega| \epsilon \max(\epsilon,h)}.
    $$

    For the first term, we have that
    \begin{align*}
        I^{h,\tau}_\epsilon(t,s)
        &=  \intbar_{t}^{s} \int_\Omega  (m_\epsilon * \Tilde{p}^{h,\tau}_r) (x)  (1 - \Tilde{u}^{h,\tau}_t(x) ) \dd x \dd r
        + \intbar_{t}^{s} \int_\Omega (m_\epsilon * \Tilde{p}^{h,\tau}_r) (x) (\Tilde{u}^{h,\tau}_t(x) - \Tilde{u}^{h,\tau}_r(x)) \dd x \dd r \\
        &\eqcolon A^{h,\tau}_\epsilon(t,s) + B^{h,\tau}_\epsilon(t,s)
    \end{align*}

    By \cite[Lemma~3.4]{maury2010macroscopic}, we have the bound
    \begin{align*}
        |B^{h,\tau}_\epsilon(t,s)| &\leq \intbar_{t}^{s} \| \nabla (m_\epsilon * \Tilde{p}^{h,\tau}_r) \|_2 W_2(\rho^{h,\tau}_t, \rho^{h,\tau}_r) \dd r 
        \leq C \sqrt{s-t} \intbar_{t}^{s} \| \nabla (m_\epsilon * \Tilde{p}^{h,\tau}_r) \|_2 \dd r \\ 
        &\leq C\bigg( \int_{t}^{s} \| \nabla (m_\epsilon * \Tilde{p}^{h,\tau}_r) \|_2^2 \dd r \bigg)^{1/2}.
    \end{align*}

   We now aim to pass to the limit $A^{h,\tau}_\epsilon(t,s) + B^{h,\tau}_\epsilon(t,s) + E^{h,\tau}_\epsilon(t,s)$ as $h, \tau \to 0$. By \eqref{eq:converg-av-pressure}, we get that
   \begin{align*}
       \intbar_t^s (m_\epsilon* \Tilde{p}^{h,\tau}_r)\dd r \to \intbar_t^s (m_\epsilon*p_r)\dd r \qquad \text{as } h,\tau \to 0 \text{ strongly in } L^2(\Omega).
   \end{align*}
   This convergence of convolution together with the weak-$*$ convergence $u^{h,\tau}_t \rightharpoonup u_t$ in $L^\infty(\Omega)$ for a.e. $t\in (0,T)$ imply
   \begin{align*}
       \lim_{h,\tau\to 0} A^{h,\tau}_\epsilon(t,s) = \int_\Omega  \intbar_{t}^{s} (m_\epsilon * p_r) (x) \dd r \, (1 - u_t(x) ) \dd x \eqcolon A_\epsilon(t,s).
   \end{align*}

    Since $\int_0^T \|\Tilde{p}^{h,\tau}_t\|^2_{L^2} \dd t$ is bounded, uniformly in $h$ and $\tau$, then $\int_0^T \|\nabla m_\epsilon * \Tilde{p}^{h,\tau}_t\|^2_{L^2} \dd t$ is also bounded uniformly in $h$ and $\tau$ (with a constant depending on $\epsilon$). Therefore, the function $\|\nabla m_\epsilon * \Tilde{p}^{h,\tau}_t\|^2_{L^2} \in L^1((0,T))$ converges weakly as $h,\tau\to 0$ to a measure $\mu\in\calM^+((0,T))$ and
    \begin{align*}
        \lim_{h\to 0, \tau\to 0} B^{h,\tau}_\epsilon(t,s) \leq C_\epsilon \sqrt{\mu([t,s])} \eqcolon B_\epsilon(t,s) .
    \end{align*}
    For $E^{h,\tau}_\epsilon(t,s) $, we simply get $\displaystyle\lim_{h\to 0, \tau\to 0} |E^{h,\tau}_\epsilon(t,s) | \leq \sqrt{|\Omega|} \epsilon \eqcolon E_\epsilon$.
    
    We now pass $s \to t$. For any Lebesgue point $t\in (0,T)$ of $p$, we get
    $$
        \lim_{s\to t} A_\epsilon(t,s) = \int_\Omega  (m_\epsilon * p_t) (x)  (1 - u_t(x) ) \dd x \eqcolon A_\epsilon
    $$
    and 
    $$
        \lim_{s\to t} |B_\epsilon(t,s)| \leq C_\epsilon \lim_{s\to t} \sqrt{\mu([t,s])} = 0 \qquad \text{for a.e. } t\in (0,T).
    $$

    Finally, as $\epsilon\to 0$, we obtain
    $$
        \lim_{\epsilon\to 0} \, A_\epsilon =\int_\Omega p_t(x) (1 - u_t(x)) \dd x
    $$
    and the error term $E_\epsilon$ vanishes. In total, we obtain
    $$
        \int_\Omega p_t(x) (1 - u_t(x)) \dd x = 0 \qquad \text{for almost all } t\in [0,T].
    $$

    \emph{Step 4. Convergence of the pressure gradient.}
    A small modification of the proof of Lemma~\ref{lem:convergence-disc-grad} yields
    \begin{align*}
        \int_0^T \| \dnabla^h g^{h,\tau}_t \|_{L^2}^2 \dd t = \int_0^T \Scm^h (\Tilde{u}^{h,\tau}_t, \Tilde{p}^{h,\tau}_t) \dd t.
    \end{align*}
    Since the right-hand side is uniformly bounded by $\sup_{h>0} \Fcm^h(\rho^h_0) < \infty$, there exists $\zeta \in L^2((0,T), L^2(\Omega))$ such that $\dnabla^h g^{h,\tau} \rightharpoonup \zeta$ weakly in $L^2((0,T), L^2(\Omega))$. Following similar lines as the proof of Lemma~\ref{lem:convergence-disc-grad}, we obtain for an arbitrary $\psi\in C_c^\infty((0,T)\times\Omega;\R^d)$ that
    \begin{align*}
        \bigg| \int_0^T \int_\Omega \big( \nabla_x \cdot \psi(x,t) \, g^{h,\tau}_t(x) + \psi(x,t) \cdot (\dnabla^h g^{h,\tau}_t) (x) \big) \dd x \dd t \bigg|
        \leq C h \bigg( \int_0^T \Scm^h (\Tilde{u}^{h,\tau}_t, \Tilde{p}^{h,\tau}_t) \dd t \bigg)^{1/2}.
    \end{align*}
    This gives an approximate integration-by-parts formula
    $$
        \int_0^T \int_\Omega \nabla_x \cdot \psi(x,t) \, g^{h,\tau}_t(x) \dd x \dd t = \int_0^T \int_\Omega \psi(x,t) \cdot (\dnabla^h g^{h,\tau}_t) (x) \dd x \dd t + O(h)
    $$
    and passing to the limit $h,\tau \to 0$ we obtain
    $$
        \int_0^T \int_\Omega \nabla_x \cdot \psi(x,t) \, p_t(x) \dd x \dd t = \int_0^T \int_\Omega \psi(x,t) \cdot \zeta_t (x) \dd x \dd t.
    $$
    Since $\psi$ is arbitrary, we have $\zeta = \nabla p$.
\end{proof}

\begin{lemma}\label{lem:liminf-Fisher-CM} Under the assumptions of Theorem~\ref{th:compactness-CM}, we get the following liminf inequlity for the Fisher information:
    $$
        \liminf_{h,\tau\to 0} \int_0^T \Scm^h \big( \Tilde{u}^{h,\tau}_t, g_{\varepsilon(h)} (\Tilde{p}^{h,\tau}_t) \big) \dd t \geq \int_0^T \Scm (u_t, p_t) \dd t.
    $$
\end{lemma}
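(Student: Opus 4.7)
Setting $g^{h,\tau}_t := g_{\varepsilon(h)}(\Tilde p^{h,\tau}_t)$, the plan is to decompose the integrand $\Scm^h(\Tilde u^{h,\tau}_t, g^{h,\tau}_t)$ into the three summands appearing in its definition and to pass to the liminf in each piece separately, relying on the convergences from Theorem~\ref{th:compactness-CM} together with the discrete-gradient techniques developed in Lemma~\ref{lem:convergence-disc-grad}.

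The purely quadratic summand $\frac{1}{4}\sum_{(x,y)\in\Sigma^h}(g^{h,\tau}_x-g^{h,\tau}_y)^2 h^{d-2}$ rewrites, via the same algebra relating $\calS^h$ to $\|\dnabla^h\cdot\|_{L^2}^2$ used in Lemma~\ref{lem:convergence-disc-grad}, as (a constant multiple of) $\|\dnabla^h g^{h,\tau}_t\|_{L^2}^2$; since $\dnabla^h g^{h,\tau}\rightharpoonup\nabla p$ weakly in $L^2((0,T)\times\Omega;\R^d)$ by Theorem~\ref{th:compactness-CM}, weak lower semicontinuity of the $L^2$-norm yields a time-integrated lower bound by $\frac{1}{2}\int_0^T\|\nabla p_t\|_{L^2}^2\dd t$. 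The cross summand, in turn, rewrites as the $L^2$-pairing $\int_\Omega \dnabla^h g^{h,\tau}_t\cdot\dnabla^h \hat V^h\dd x$; since $V\in C^1(\R^d)$ gives $\dnabla^h\hat V^h\to\nabla V$ uniformly, a standard weak-strong argument after time integration produces convergence of the cross term to $\int_0^T\int_\Omega\nabla p_t\cdot\nabla V\dd x\dd t$.

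For the third summand, one rewrites it as $\frac{1}{2}\int_\Omega|\dnabla^h\hat V^h|^2\hat\mu^{h,\tau}_t\dd x$, where $\hat\mu^{h,\tau}_t$ is the piecewise constant reconstruction on the dual cells of the upwind mean $\Lambda_V(\Tilde u^h_x,\Tilde u^h_{x+\vh})$. Since $\Lambda_V$ selects between $\Tilde u^{h,\tau}_x$ and $\Tilde u^{h,\tau}_{x+\vh}$ according to the sign of $V(x+\vh)-V(x)$, and since both the reconstructed density and its lattice translates share the same weak-$\ast$ limit $u$ in $L^\infty((0,T)\times\Omega)$ (by the translation argument used for $\hat\mu^h_i$ in Lemma~\ref{lem:convergence-disc-grad}), one checks that $\hat\mu^{h,\tau}\rightharpoonup u$ weakly-$\ast$ in $L^\infty$ regardless of the tiebreaker rule chosen on $\{V(x)=V(x+\vh)\}$. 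Combined with the uniform convergence $|\dnabla^h\hat V^h|^2\to|\nabla V|^2$, a weak-strong pairing then gives convergence of this piece to $\frac{1}{2}\int_0^T\int_\Omega|\nabla V|^2 u_t\dd x\dd t$.

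Summing the three contributions and using the density-pressure constraint $p_t(1-u_t)=0$, which forces $u_t|\nabla p_t|^2=|\nabla p_t|^2$ and $u_t\nabla p_t\cdot\nabla V=\nabla p_t\cdot\nabla V$ almost everywhere, the right-hand side reassembles into $\frac{1}{2}\int_0^T\int_\Omega|\nabla p_t+\nabla V|^2 u_t\dd x\dd t=\int_0^T\Scm(u_t,p_t)\dd t$, which is exactly the desired bound. The most delicate point is the weak-$\ast$ identification of the upwind limit $\hat\mu^{h,\tau}\rightharpoonup u$: one must verify that the arbitrary selection rule encoded in $\Lambda_V$ does not produce concentration artefacts. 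This is not an issue because the only ambiguity occurs on $\{V(x+\vh)=V(x)\}$, which collapses in the limit onto $\{\nabla V=0\}$, exactly the set on which the weight $|\dnabla^h\hat V^h|^2$ vanishes.
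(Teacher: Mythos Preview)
Your decomposition into three summands and your handling of the first two match the paper exactly: the quadratic-in-$p$ part via weak lower semicontinuity of $\|\dnabla^h g^{h,\tau}\|_{L^2}$, and the cross part via a weak-strong pairing after replacing the discrete $V$-difference by $\nabla V$.

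For the third summand your route differs from the paper's. You pair $|\dnabla^h\hat V^h|^2$ against the upwind density $\hat\mu^{h,\tau}$ and argue via weak-$\ast$ convergence of the latter. The paper instead rewrites the upwind sum algebraically as $\tfrac12\sum_x u^h_x\sum_{\vh}\big((V(x)-V(x-\vh))^+\big)^2 h^{d-2}$, expands $(V(x)-V(x-\vh))^+=(\nabla V(x)\cdot\vh)^++o(h)$, and applies the tensor identity $\sum_{\vh\in\vdh}\vh\otimes\vh\,\Tilde\Ind_{\nabla V\cdot\vh>0}=h^2\,\mathrm{Id}$ to obtain directly $\tfrac12\sum_x u^h_x|\nabla V(x)|^2 h^d + o(1)$. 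This bypasses the upwind-selection limit altogether.

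Your argument also works, but one intermediate claim is overstated: $\hat\mu^{h,\tau}\rightharpoonup u$ weakly-$\ast$ need not hold. In the crowd-motion case $\hat u^{h,\tau}$ converges only weakly-$\ast$, not strongly in $L^1$ as in Lemma~\ref{lem:convergence-disc-grad}, so the difference between $\hat u^{h,\tau}$ and its lattice translate does not vanish strongly; on $\{\partial_i V=0\}$ (this should be direction by direction, not $\{\nabla V=0\}$) the selector may fail to converge, and $\hat\mu^{h,\tau}_i$ can have a limit different from $u$ there. What \emph{is} true, and what you actually need, is convergence of the product $(\partial_i V)^2\hat\mu^{h,\tau}_i$, since the bad set carries weight at most $\epsilon^2$. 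Your last paragraph shows you see this, so just state the convergence for the product rather than for $\hat\mu^{h,\tau}$ alone. Your final reassembly via $p(1-u)=0\Rightarrow\nabla p=0$ a.e.\ on $\{u<1\}$ is correct and necessary; the paper leaves this step implicit.
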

\begin{proof} We consider the three part of $\Scm^h$ one by one. For the first part, following the lines of Lemma~\ref{lem:convergence-disc-grad}, one gets that
    \begin{align*}
        \liminf_{h,\tau \to 0} \int_0^T \frac{1}{4} \sum_{(x,y)\in\Sigma^h} \big( g_\varepsilon(\Tilde{p}^{h,\tau}_x(t)) - g_\varepsilon(\Tilde{p}^{h,\tau}_y(t)) \big)^2 h^{d-2} \dd t 
        &= \liminf_{h,\tau \to 0} \frac{1}{2} \int_0^T \|\dnabla^h g_\varepsilon(\Tilde{p}^{h,\tau}_t) \|_{L^2}^2 \dd t \\
        &\geq \frac{1}{2} \int_0^T \| \nabla p_t \|_{L^2}^2 \dd t.
    \end{align*}

    To handle the cross-term, we use that for all $x\in\calT^h$ and $\vh\in\vdh$
    $$
        V(x + \vh) - V(x) = (\nabla V)(x) \cdot \vh + o(h)
        = \intbar_{Q_h(x + \vh/2)} (\nabla V)(z) \dd z \cdot \vh + o(h).
    $$
    Thus, we have
    \begin{align*}
        \frac{1}{2} &\sum_{(x,y)\in\Sigma^h} \big( g_\varepsilon(\Tilde{p}^{h,\tau}_x) - g_\varepsilon(\Tilde{p}^{h,\tau}_y) \big) (V(x) - V(y)) h^{d-2} \\
        &= \frac{1}{2} \sum_{x\in\calT^h} h^d \sum_{\vh\in\vdh} \intbar_{Q_h(x + \vh/2)} \big( g_\varepsilon(\Tilde{p}^{h,\tau}_{x+\vh}) - g_\varepsilon(\Tilde{p}^{h,\tau}_x) \big) \frac{\vh}{h^2} \cdot (\nabla V)(z) \dd z + o(1)|_{h\to 0} \\
        &= \int_\Omega \dnabla^h \big( g_\varepsilon(\Tilde{p}^{h,\tau}) \big) (z) \cdot (\nabla V)(z) \dd z + o(1)|_{h\to 0}.
    \end{align*}
    Since $\dnabla^h g_\varepsilon(\Tilde{p}^{h,\tau}) \rightharpoonup \nabla p$ weakly in $L^2((0,T), L^2(\Omega))$, we obtain
    $$
        \lim_{h,\tau \to 0} \int_0^T \frac{1}{2} \sum_{(x,y)\in\Sigma^h} \big( g_\varepsilon(\Tilde{p}^{h,\tau}_x)(t) - g_\varepsilon(\Tilde{p}^{h,\tau}_y)(t) \big) (V(x) - V(y)) h^{d-2} \dd t
        = \int_0^T \int_\Omega \nabla p_t(z) \cdot \nabla V(z) \dd z \dd t.
    $$

    We now turn to the last term in the Fisher information. The same idea was used in \cite[Theorem~6.2]{hraivoronska2024variational}, and we briefly repeat it here for completeness. A simple rewriting gives
    \begin{align*}
        \frac{1}{4} \sum_{(x,y)\in\Sigma^h}  (V(x) - V(y))^2 \Lambda_V(u^h_x, u^h_y) h^{d-2}
        &= \frac{1}{2} \sum_{x\in\calT^h} \sum_{\vh\in\vdh} u^h_x \big((V(x) - V(x - \vh))^+\big)^2 h^{d-2}.
    \end{align*}
    Since the positive part is a Lipschitz continuous function and $V\in C^1$, we have
    $$
        (V(x) - V(x - \vh))^+ =  (\nabla V (x) \cdot \vh)^+ + o(h),
    $$
    therefore,
    \begin{align*}
        \frac{1}{2} \sum_{x\in\calT^h} \sum_{\vh\in\vdh} u^h_x &\big((V(x) - V(x - \vh))^+\big)^2 h^{d-2}
        = \frac{1}{2} \sum_{x\in\calT^h} \sum_{\vh\in\vdh} u^h_x \big((\nabla V (x))^+ \cdot \vh \big)^2 h^{d-2} + o(1)|_{h\to 0} \\
        &= \frac{1}{2} \sum_{x\in\calT^h} u^h_x \langle \nabla V(x), \sum_{\vh\in\vdh} \vh \otimes \vh \Tilde{\Ind}_{\nabla V(x) \cdot \vh}  \nabla V(x) \rangle h^{d-2} + o(1)|_{h\to 0},
    \end{align*}
    where we use the notation $\Tilde{\Ind}_{\nabla V(x) \cdot \vh} = \Ind \{\nabla V(x) \cdot \vh > 0\} +\frac12 \Ind \{\nabla V(x) \cdot \vh = 0\} $. The tensor can be rewritten as
    $$
        h^{d-2} \sum_{\vh\in\vdh} \vh \otimes \vh \Tilde{\Ind}_{\nabla V(x) \cdot \vh} = h^d \sum_{i=1}^d e_i \otimes e_i = h^d \Id.
    $$
    Therefore,
    \begin{align*}
        \frac{1}{2} \sum_{x\in\calT^h} \sum_{\vh\in\vdh} u^h_x \big((V(x) - V(x - \vh))^+\big)^2 h^{d-2}
        &= \frac{1}{2} \sum_{x\in\calT^h} u^h_x |\nabla V (x)|^2  h^d + o(1)|_{h\to 0} \\
        &= \frac{1}{2} \sum_{x\in\calT^h} \int_{Q_h(x)} |\nabla V (z)|^2 \hat{u}^h(z) \dd z + o(1)|_{h\to 0},
    \end{align*}
    with $\hat{u}^h$ being piecewise constant reconstruction on $\{Q_h(x)\}_{x\in\calT^h}$.
    \begin{align*}
        \liminf_{h,\tau\to 0} \int_0^T \frac{1}{4} \sum_{(x,y)\in\Sigma^h}  (V(x) - V(y))^2 \Lambda_V(\Tilde{u}^{h,\tau}_x(t), \Tilde{u}^{h,\tau}_y(t)) h^{d-2} \dd t
        &\geq \liminf_{h,\tau\to 0} \frac{1}{2} \int_0^T \int_\Omega |\nabla V|^2 \Tilde{u}^{h,\tau}_t \dd z \dd t \\
        &= \int_0^T \int_\Omega |\nabla V(z)|^2 u_t(z) \dd z \dd t.
    \end{align*}
\end{proof}

We conclude with the proof of Theorem~\ref{th:main-result-cm-sec}.
\begin{proof}
    Similarly to Lemma~\ref{lem:var-ineq}, we use one-step variational inequality for \eqref{eq:JKO-crowd-disc} as in \eqref{eq:disc-step-var-ineq}, the lower bound with the Fisher information from Lemma~\ref{lem:lower-bound-by-slope-CM}, geodesic interpolation $(\check{\rho}^{h,\tau}, \check{v}^{h,\tau})$, and time rescaling 
    to arrive at
    \begin{align*}
        0 \geq \Fcm^h(\rho^{h,\tau}_N) - \Fcm^h(\rho^h_0) + \frac{1}{2} \int_0^T \|\check{v}^{h,\tau}_t\|_{L^2(\check{\rho}^{h,\tau}_t)} \dd t 
        + \int_0^T \calS^h_{\text{CM}}(\Tilde{\rho}^{h,\tau}_t, ~ &g_{\varepsilon(h)}(\Tilde{p}^{h,\tau}_t) )\lambda_\epsilon^{h,\tau} (\dd t) \\
        &+ \frac{dT}{4} \frac{h}{\tau} \log \epsilon + o(1)_{h\to 0}.
    \end{align*}

\end{proof}

\bibliographystyle{abbrv}
\bibliography{ref}
\end{document}